\tikzstyle{process} = [rectangle, minimum width=3cm, minimum height=1cm, text centered, draw=black, text width=3cm]
\tikzstyle{arrow} = [thick,->,>=stealth]
  \crefname{theorem}{Theorem}{Theorems}
  \crefname{thm}{Theorem}{Theorems}
  \crefname{lemma}{Lemma}{Lemmas}
  \crefname{lem}{Lemma}{Lemmas}
  \crefname{remark}{Remark}{Remarks}
  \crefname{prop}{Proposition}{Propositions}
  \crefname{proposition}{Proposition}{Propositions}
  \crefname{notation}{Notation}{Notations}
  \crefname{claim}{Claim}{Claims}
  \crefname{observation}{Observation}{Observations}
  \crefname{defn}{Definition}{Definitions}
  \crefname{corollary}{Corollary}{Corollaries}
  \crefname{section}{Section}{Sections}
  \crefname{figure}{Figure}{Figures}
  \crefname{exercise}{Exercise}{Exercises}
    \crefname{assumption}{Assumption}{Assumptions}
\newtheorem{thm}{Theorem}[section]
\newtheorem{claim}[thm]{Claim}
\newtheorem{lemma}[thm]{Lemma}
\newtheorem{corollary}[thm]{Corollary}
\newtheorem{proposition}[thm]{Proposition}
\newtheorem{defn}[thm]{Definition}
\numberwithin{equation}{section}
\newtheorem*{h1}{Assumption (H1): Transition probabilities}
\newtheorem*{h2}{Assumption (H2): Traveling far}
\theoremstyle{definition}
\newtheorem{remark}[thm]{Remark}
\newcommand{\id}{\operatorname*{id}}
\def\P{\mathbb{P}}
\def\Prob{\P}
\def\E{\mathbb{E}}
\def\Z{\mathbb{Z}}
\def\N{\mathbb{N}}
\newcommand{\bbZ}{\mathbb{Z}}
\def\eps{\varepsilon}
\DeclareMathOperator{\Id}{Id}
\DeclareMathOperator{\var}{Var}
\DeclareMathOperator{\cov}{Cov}
\newcommand{\tld}[1]{\tilde{#1}}
\def\Real{{\mathbb{R}}}
\def\Expec{{\mathbb{E}}}
\newcommand{\diff}{\mathop{}\!\mathrm{d}}
\newcommand{\Ft}[1]{{\widehat{#1}}}
\def\tauclo{\tau_{\rm clo}}
\def\taureg{\tau_{\rm reg}}
\newcommand{\Hyp}{\mcal{H}}
\newcommand{\Hypone}[1]{\hyperref[h1]{\mbf{H}_1(#1)}}
\newcommand{\Hyptwo}[1]{\hyperref[h2]{\mbf{H}_2(#1)}}
\newcommand{\mcal}[1]{{\mathcal{#1}}}
\newcommand{\mbf}[1]{{\mathbf{#1}}}
\newcommand{\Ss}{\mcal{S}}
\author[D. Elboim]{Dor Elboim}
\address[Dor Elboim]{Stanford University, Department of Mathematics, Stanford CA 94305}
\email{dorelboim@gmail.com}
\author[A. Gloria]{Antoine Gloria}
\address[Antoine Gloria]{Sorbonne Universit\'e, Universit\'e Paris Cit\'e, CNRS, Laboratoire Jacques-Louis Lions, LJLL, F-75005 Paris, France  \& Universit\'e Libre de Bruxelles, D\'epartement de Math\'ematique, 1050~Brussels, Belgium}
\email{antoine.gloria@sorbonne-universite.fr}
\author[F. Hern\'andez]{Felipe Hern\'andez}
\address[Felipe Hern\'andez]{Penn State University, Department of Mathematics, State College PA 16803}
\email{felipeh@psu.edu}
\title{Diffusivity of the Lorentz mirror walk in high dimensions}
\date{\today}
\begin{document}

\begin{abstract}
Place random mirrors at vertices of $\Z^d$ (for $d\ge 2$) independently at density $p\in [0,1]$, and shoot a ray of light in that environment. The ray changes direction when it hits a mirror, and is therefore slowed down by bouncing around -- this is the Lorentz mirror walk. The main question of interest  is whether  most trajectories are localized (finite) or delocalized (infinite), and, in the latter case, at what speed light goes to infinity. The consensus is that all trajectories should be finite when $d=2$ for any density $p>0$,  whereas in dimensions $d\ge 3$ most trajectories should be infinite (at least for small $p$), and the speed be diffusive. In this article we establish that for all $d\ge 4$, most trajectories do not localize before super-polynomial time in units of $p^{-1}$ -- more precisely, trajectories behave diffusively at times $t \simeq p^{-M}$ for all $M>1$. Combined with the strategy developed by the first author and Sly for the interchange model, this should imply diffusion in infinite time (for $p>0$ small enough) in dimensions $d\ge 5$.

\end{abstract}

\begin{abstract}
In the Lorentz mirror walk in dimension $d\ge 2$, mirrors are randomly placed on the vertices of $\mathbb Z ^d$ at density $p\in [0,1]$. A light ray is then shot from the origin and deflected through the various mirrors in space. The object of study is the random trajectory obtained in this way, and it is of upmost interest to determine whether these trajectories are localized (finite) or delocalized (infinite).
A folklore conjecture states that for $d=2$ these trajectories are finite for any density $p>0$, while in dimensions $d\ge 3$ and for $p>0$ small enough some trajectories are infinite.
In this paper we prove that for all dimensions $d\ge 4$ and any small density $p$, the trajectories behave diffusively at all polynomial time scales $t\approx p^{-M}$ with $M>1$, and in particular, they do not close by this time.
\end{abstract}

\maketitle
\setcounter{tocdepth}{1}

\tableofcontents

\section{Introduction and main result}

\subsection{Transport in heterogeneous media}

 In this paper we study the long-time behavior of the Lorentz mirror walk, which is the trajectory traced by a ray of light traveling along edges in $\bbZ^d$ and deflected by randomly placed mirrors.  This is a simple example of transport in a heterogeneous medium.
For classical and quantum dynamical models in heterogeneous environments, one expects three universal behaviors \cite{PhysRev.109.1492,Spencer-talk}: \emph{kinetic transport} (at time $t$, the particle is at distance $\simeq t$), \emph{localization} (the particle gets trapped in a bounded region), and \emph{diffusive transport} (at time $t$, the particle is at distance $\simeq \sqrt t$).  These models include the Lorentz mirror walk, the Lorentz gas, stochastic acceleration, and classical and quantum waves in periodic or random media.
Whereas localization and ballistic transport have been proved in some regimes \cite{MR4255829,MR4328425,MR2024819,MR3430256,MR696803,MR3501794,MR1405959,MR3982871}, diffusive transport has remained much more elusive\footnote{In models with an initial kinetic transport regime.}, and only proved for short times \cite{MR496299,MR471824,MR597029,MR833271,MR2207646,MR4156218,MR4314129,MR2413135,MR4838987}.

\medskip

The motion of a particle in a \emph{random} medium usually starts with a kinetic part, and then either gets trapped or displays diffusive transport at larger times. This transition from kinetic to diffusive transport is a universal phenomenon that is however still poorly understood from the mathematical perspective.
In all the models for which it has been established, diffusive transport is proved for relatively short times and by comparison to models without memory, for which the disorder is new at every time. The difficulty is to prove the closeness of the two models for times that are larger than the kinetic time. This entails what we could call the \emph{perturbative diffusive regime}. The more complex the model, the harder the proof. For the Lorentz mirror walk, the proof is two pages long and relies on the Kesten-Papanicolaou argument (see for instance \cite{Piernot} or Section~\ref{sec:KP}), whereas for quantum diffusion this is already hard analysis \cite{MR2413135,MR4838987} because the Schr\"odinger flow in a random potential simply never coincides with a model without memory.
The long-standing bottleneck is the following: whereas diffusive transport is mostly established using independence, models of interest are never independent at large times, and there is no hope to get \emph{non-perturbative diffusion} without addressing memory effects.

\medskip

In this contribution we establish \emph{non-perturbative diffusive transport} for the Lorentz mirror walk up to super-algebraic times measured in units of $p^{-1}$ in dimensions $d\ge 4$, the first such result for a model with a kinetic time scale.

\medskip

Our approach takes inspiration from the work by the first author and Sly \cite{elboim2022infinite} on the so-called interchange model  and on the coupling introduced by Lutsko and T\'oth in \cite{MR4156218}. It amounts to showing that memory effects do not destroy  long-time diffusive transport. Combined with the strategy of \cite{elboim2022infinite}, the present result is expected to imply (annealed) diffusive transport for all times in dimensions $d\ge 5$, provided $p>0$ is small enough.
We believe the methods of this paper could be used to study more complex models, such as the Lorentz gas.

\subsection{Diffusivity of the Lorentz mirror walk}

Let us introduce precisely the Lorentz mirror walk on $\Z^d$. The random  environment is a field of matchings $\mathbb M:\bbZ^d\to \mbf{M}_d$ on the set of directions $\{\pm e_j\}_{j=1}^d$ which are axis-aligned unit vectors, where
\begin{equation*}
\mbf{M}_d = \{m: \{\pm e_j\} \to \{\pm e_j\} |
m(-m(v)) = -v, m(e_j) \neq -e_j\}.
\end{equation*}
In particular we require $m(-m(v)) = -v$ (which makes the dynamics time-reversible), and avoid head-on collisions (allowing such collisions would readily imply that all walks are localized) so we enforce $m(v)\not=-v$ (which makes the model irrelevant in dimension $d=1$).
We assume that $\mathbb M$ has independent entries $\mathbb M_x$
chosen so that with probability $1-p$, the matching is $\mathbb M_x=\id$, and with probability $p$, the matching is called a \emph{mirror}, a matching uniformly chosen\footnote{Note that $\id$ is an admissible mirror too, so that, all in all, the probability that $m_x=\id$ is indeed $1-p+\frac p{|\mbf{M}_d|}$, $|\mbf{M}_d|=\prod_{j=1}^{d}(2(d-j)+1)$. } in $\mbf{M}_d$.

The Lorentz walk $W_L(t)=(X_L(t),V_L(t))$, $t\in \N$, starting at position $X_L(0)=0$ with velocity $V_L(0)=e_1$ is defined by the update rule
\begin{equation}
\label{eq:Lorentz mirror-evol}
\begin{split}
X_L(t+1) &= X_L(t) + V_L(t), \\
V_L(t+1) &= \mathbb M_{X(t+1)}(V_L(t)).
\end{split}
\end{equation}
Our main result establishes annealed diffusion up to super-polynomial time measured in units of $p^{-1}$.
\begin{thm}\label{th:diffusive}
Let $d\ge 4$. For all $p$ sufficiently small, we have for all times $p^{-5/4}\le T\le e^{\log ^2(1/p)}$
\[
\Big|\frac p T\,\E\Big[\|X_L(T)\|_2^2\Big]-\frac{2d-1}{d-1}
    \Big| \le p^{1/9}.
\]
\end{thm}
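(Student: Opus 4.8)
\emph{Strategy and the memoryless model.} The plan is to compare $W_L$ with a memoryless (``annealed Markovian'') surrogate whose second moment can be computed by hand --- this is what produces the constant $\tfrac{2d-1}{d-1}$ --- and then to bound the discrepancy between the two walks uniformly over the whole time window. Let $W_M=(X_M,V_M)$ be defined as in~\eqref{eq:Lorentz mirror-evol}, except that the velocity is refreshed at every step with an \emph{independent} copy of the environment: $V_M(t+1)=\mathbb M'_t(V_M(t))$, with $(\mathbb M'_t)_{t\ge 0}$ i.i.d.\ of the law of $\mathbb M_0$. Then $V_M$ is a Markov chain on $\{\pm e_j\}_{j\le d}$, and since a uniform $m\in\mbf M_d$ sends a fixed $v$ to a uniformly random element of $\{\pm e_j\}\setminus\{-v\}$ (in the perfect-matching picture, $m(v)$ is the partner of $-v$), one gets $\E[V_M(t+1)\mid V_M(t)]=\rho\,V_M(t)$ with $\rho:=1-p\,\tfrac{2(d-1)}{2d-1}$. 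Hence $\E[V_M(s)\cdot V_M(s')]=\rho^{|s-s'|}$, so that
\[
\E\big[\|X_M(T)\|_2^2\big]=\sum_{0\le s,s'<T}\rho^{|s-s'|}=\frac{1+\rho}{1-\rho}\,T-\frac{2\rho}{(1-\rho)^2}+O\!\big(\rho^{\,T}\big).
\]
Since $\tfrac{1+\rho}{1-\rho}=\tfrac1p\cdot\tfrac{2d-1}{d-1}-1$ exactly, this gives $\big|\,\E\|X_M(T)\|_2^2-\tfrac Tp\,\tfrac{2d-1}{d-1}\,\big|\le T+O(p^{-2})$, which is at most $\tfrac12 p^{1/9}\,\tfrac Tp$ as soon as $p$ is small and $T\ge p^{-5/4}$; this is exactly where the lower cutoff on $T$ is used, to absorb the $O(p^{-2})$ renewal correction. (This is the Kesten--Papanicolaou-type computation of \cref{sec:KP}.)

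\emph{Coupling and reduction.} Realize $W_L$ and $W_M$ on a common probability space by revealing $\mathbb M$ sequentially along the (initially common) trajectory: whenever the walk first reaches a site, draw a fresh matching there and feed it to \emph{both} dynamics. The two processes then coincide until the first time the trajectory returns to an already-revealed site, after which $W_L$ is constrained by the stored mirror while $W_M$ resamples. Set $\Delta(T):=X_L(T)-X_M(T)$. As $\E\|X_M(T)\|_2^2\lesssim T/p$, Cauchy--Schwarz gives $\big|\E\|X_L(T)\|_2^2-\E\|X_M(T)\|_2^2\big|\lesssim\sqrt{T/p}\,\sqrt{\E\|\Delta(T)\|_2^2}+\E\|\Delta(T)\|_2^2$, so \cref{th:diffusive} reduces to proving
\[
\E\big[\|\Delta(T)\|_2^2\big]\ \le\ p^{1/3}\,T\qquad\text{uniformly for }\ p^{-5/4}\le T\le e^{\log^2(1/p)}.
\]

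\emph{Leg decomposition and the per-collision estimate.} Cut each trajectory into maximal straight ``legs'' separated by deflecting mirrors; there are $\approx p\,\tfrac{2d-2}{2d-1}\,T$ of them, with geometric lengths of mean $\approx 1/p$ and, for $W_M$, mutually independent mean-zero directions (so that the legs contribute only diagonally to $\E\|X_M(T)\|_2^2$). A discrepancy between $W_L$ and $W_M$ can be created only when a leg of $W_L$ runs into a site carrying an already-revealed mirror --- the endpoint of an earlier leg, or, much more often, an interior point of one, where the stored mirror is conditioned to fix that earlier leg's direction. After rescaling by $1/p$, the sequence of leg-start points is a lattice random walk with unit-scale increments, so a local central limit estimate bounds, for each pair of legs $i<j$, the probability that they run through a common site, and hence the expected number of such ``collisions'' up to time $T$; multiplying by the conditional probability ($\approx p$) that the common site carries a genuine mirror rather than $\id$, and by the squared-displacement cost ($\lesssim p^{-2}$) of a single collision redirecting the tail of a leg, one obtains a bound $\E\|\Delta(T)\|_2^2\lesssim p^{\,c}\,T$ with $c=c(d)>0$ (and $c(d)\to\infty$ as $d\to\infty$) --- \emph{provided} the collisions were stochastically independent. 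This naive estimate lies far below the $p^{1/3}T$ required, so the true content of the theorem is not the size of any single contribution but \emph{uniform} control over the whole horizon $T\le e^{\log^2(1/p)}$.

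\emph{The main obstacle: compounding, and the role of $d\ge 4$.} The collisions are not independent: each one sends the walk back into its already-explored range, transiently raising the chance of further collisions, so a priori the error could grow multiplicatively along the trajectory; ruling this out up to quasi-polynomial times is the crux of the proof. I would run a multiscale (renormalization) scheme: assuming the diffusive estimate up to a scale $t$, partition $[0,2t]$ into $\approx t/t_0$ blocks of length $t_0$, show that each block evolves like an independent fresh run of the walk up to an error that is small --- because transience of the diffusively-rescaled leg-walk for $d\ge 4$ makes the walk separate, with high probability, from the range explored a few blocks earlier --- and essentially uncorrelated across blocks, so that each doubling of the scale costs only a multiplicative factor $1+O(p^{\,c})$. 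Iterating over the $O(\log^2(1/p))$ doublings then keeps the accumulated error at $(1+O(p^{\,c}))^{O(\log^2(1/p))}=1+o(1)$, which is precisely what the stated time window permits (and which would fail for a window such as $e^{p^{-\delta}}$). The hypothesis $d\ge 4$ enters exactly here, ensuring that returns of the leg-walk to its own range are rare enough --- and stay summable, with room to spare, over a quasi-polynomial number of scales --- for the coupling to persist through the iteration; the dimension $d=3$ is borderline and not reached by this method. Keeping track of the numerical losses from the coupling, from the leg approximation, and from the memoryless comparison finally produces the exponent $1/9$.
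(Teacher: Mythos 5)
Your outline reproduces the strategy of the proof in spirit --- compare $X_L$ to a memoryless driving walk via a sequentially revealed coupling, extract $\tfrac{2d-1}{d-1}$ from a Kesten--Papanicolaou computation (correctly, as in Lemma~\ref{lem:driving}), and propagate the estimate by a multi-scale scheme exploiting $d\ge 4$ --- but the central technical difficulties are glossed over, and without them the sketch is circular. The first is the scale mismatch: the events that break the Markov property (recollisions with previously explored sites) are decided at the \emph{lattice} scale, whereas the diffusive estimates and the local CLT you invoke are only meaningful at scales $\gtrsim p^{-1}$. ``Transience of the rescaled leg-walk'' does not by itself control lattice-scale self-interactions, and the set of previously explored sites is not a diffusive set but a union of line segments. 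The bridge used below is Lemma~\ref{lem:sparsity}: the set $\mcal{S}$ of times at which the walk could possibly turn towards a previously discovered mirror has cardinality bounded by the \emph{square} of the number of relevant mirrors, which reduces lattice-scale self-interaction events to counting mirrors in blocks --- and that \emph{is} diffusively controllable via the hypotheses \hyperref[h1]{\bf{(H1)}}--\hyperref[h2]{\bf{(H2)}}. Your sketch never produces such a bridge, and without it the claim that ``each block evolves like a fresh run'' presupposes exactly the collision-rarity you set out to prove.

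The second gap is in the coupling. Once $V_L$ and $V_M$ diverge, your ``reveal the environment along the trajectory'' coupling offers no mechanism for re-synchronizing the velocities; the discrepancy $\Delta(t)=X_L(t)-X_M(t)$ can then drift ballistically, so the asserted per-collision squared-displacement cost $\lesssim p^{-2}$ is unjustified. The coupling constructed in Section~\ref{sec:driving} (rule~(4), and its consequence~\eqref{eq:prob of coupling}) is designed precisely so that at the next freshly discovered mirror the driven walk's mirror is permuted to match the driving direction, re-synchronizing with conditional probability $\ge\tfrac{2d-2}{2d-1}$; only then is the cost of a desynchronization window of order $p^{-1}\log^{O(1)}(1/p)$. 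Finally, the theorem is about the non-regenerated Lorentz walk $X_L$, which may close a loop; on that event $\|X_L(T)\|$ stays bounded while $\|X_M(T)\|$ diffuses, so $\E\|\Delta(T)\|^2$ is not obviously small. The actual argument runs the induction for the regenerated walk $W$ and separately proves $\Prob(\tauclo\le e^{\log^2(1/p)})\le p^{1/3}$ (Corollary~\ref{cor:clo}) before transferring back to $X_L$; your sketch omits this reduction entirely.
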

The timescale $e^{\log^2(1/p)}$ and the exponent $p^{1/9}$ are not sharp.  The same argument we present could be pushed to reach a timescale on the order $e^{p^{-\eps}}$ for some $\eps>0$.  Reaching infinite times in dimension $d\geq 5$ would require one to combine the present argument with an adaptation of the ideas of~\cite{elboim2022infinite}.

In terms of elements of proof, the closest related works are those by Sly and the first author \cite{elboim2022infinite} on the interchange model and that by Lutsko and T\'oth \cite{MR4156218}  on the Lorentz gas.

The work \cite{MR4156218} by Lutsko and T\'oth  establishes diffusive behavior of the Lorentz gas beyond the Boltzmann-Grad limit and the Kesten-Papanicolaou time-scale \cite{MR597029}. In terms of the scaling of the Lorentz walk, this would correspond to diffusion up to times much smaller than $p^{-d}$ in dimension $d\ge 3$.  The proof introduces a subtle coupling of the Lorentz gas with the Markov flight that maximizes the set of times at which their velocities coincide. This very coupling will be crucially used in the present contribution in a form adapted to the Lorentz walk. 

The work by Sly and the first author \cite{elboim2022infinite} considers the cyclic walk in the interchange model, another self interacting walk on $\bbZ^d$ which evolves deterministically given a random environment and which exhibits diffusive behavior at long times.
One difficulty in the mirror model that is not present in the interchange model is the fact that there is a kinetic timescale at which the walk is not diffusive but ballistic.  This means one must keep track of some  information at the kinetic scale $p^{-1}$ (such as its velocity and the precise location of the mirrors in its kinetic neighborhood) in addition to large-scale diffusive behavior of the walk.  In Section~\ref{sec:proof-ideas} below we explain further the additional difficulties that appear in the analysis of the mirror walk.
Another difference with \cite{elboim2022infinite} is that in the current paper we only consider polynomial time scales (or slightly more) which makes the proof shorter.  The fact that we do not obtain a result for infinite times also allows us to obtain a result in the critical\footnote{That is, critical with respect to the strategy of proof -- $d=2$ is the physical critical dimension.} dimension $d=4$.

One may also view the interchange model and the mirror model as classical analogues of quantum processes.  In particular, the interchange model can be seen as a classical analogue of random band matrix models, where there has been a flurry of activity and new results.  Superpolynomial analogues of the infinite time result in~\cite{elboim2022infinite} have recently been established for these random band matrix models (see~\cite{yang2021delocalization,yang2022delocalization,yang2025delocalization} for a sample of a few such results).  Surprisingly, a superpolynomial result was even obtained in~\cite{dubova2025delocalization} for dimension $d=2$.
A quantum analogue of the mirror model is the Anderson tight-binding model in which the dynamics has a kinetic length scale analogous to $p^{-1}$.  The presence of the kinetic length scale seems to be a serious obstacle to obtaining strong diffusive results for the Anderson model.   In the case of the Anderson model, relatively weak diffusive results have thus been obtained~\cite{erdHos2007quantum, MR2413135, MR4838987} so far, and a quantum analogue of Theorem~\ref{th:diffusive} seems out of reach of current methods.

\subsection{Ideas in the proof}
\label{sec:proof-ideas}

At the core of our argument is a multi-scale argument that applies generally to random walks in high dimensions with local self interactions.  The fundamental idea is that, so long as  walk is \textit{diffusive}, it should not interact with itself too often for the same reason that two Brownian motions in high dimensions do not intersect.
More precisely, at any given time $t$ there should be a positive probability that the walk never interacts with its history before time $t$.
This implies a ``concatenation'' property that essentially ensures that the law of the endpoint $X(2t)$ is similar to the sum of two
independent copies $X(t)+X'(t)$, and such a concatenation property allows us to propagate the diffusive estimates.  In the case of this paper
we are able to reduce the diffusive hypotheses to two simple properties: (1) an anticoncentration inequality $\hyperref[h1]{\bf{(H1)}}$ for the endpoint distribution of the walk on the scale $p^{-1}$, and (2) a tail bound $\hyperref[h2]{\bf{(H2)}}$ on the deviation $|X(s)-X(t)|$.
This multiscale argument is a version of renormalization and uses the stability of diffusive behavior to absorb the error arising from the failure of the exact Markov property.  This is
reminiscent of the renormalization approach to the central limit theorem which relies on the stability of the Gaussian fixed point under the map $X\mapsto \frac{1}{\sqrt{2}} (X+X')$ (for a nice discussion, see~\cite{ott2023note}).  A version of the multiscale argument described above was implemented in~\cite{elboim2022infinite}, though we have streamlined the presentation and the inductive hypotheses (focussing on super-polynomial times).

The challenge one faces in applying such a multi-scale argument to the mirror walk is that the mirror walk only has diffusive behavior on scales much larger than $p^{-1}$, but the events that break the Markov property of the walk (recollisions with mirrors) are determined at the much finer lattice scale.  This results in the complication that the inductive argument must weave a ``local'' lattice-scale analysis of the walk with a ``global'' analysis that keeps track of the large-scale diffusive features.  Much of the local analysis is highly technical and based on intricate casework, but we can single out two ideas to explain here at a high level.

The first idea is a careful definition of the coupling between the (Markovian) ``driving walk''  and the mirror walk, which ensures that as often as possible the driving walk and the driven walk share the same velocity.  In particular, this coupling allows the driven walk to correct
any deviation in the velocity from the driving walk as soon as a ``fresh'' mirror is discovered (leading to~\eqref{eq:prob of coupling} below). The coupling we consider is the adaptation to the Lorentz walk of the coupling introduced in \cite{MR4156218} by Lutsko and T\'oth for the Lorentz gas. This reduces our task to ensuring that the walk has plenty of opportunities to discover fresh mirrors and that self-interactions are rare enough that the total accumulated displacement
between the walks is sub-diffusive.   The key to controlling the self-interactions is to understand the following local obstruction: if the walk at time $t$ has velocity $V(t)$ and $X(t)+sV(t)$ is the location of a previously discovered mirror for some $s$ on the kinetic scale, $s\lesssim p^{-1}$, then there will likely be a self-interaction soon.  If this is the case we say that the walk fails to be \textit{locally relaxed}.

The second idea is a way to show that the set of locally relaxed times is dense.  More precisely, we introduce a set of times $\mcal{S}$ (see~\eqref{eq:S} below) which is the set of times at which the walk could possibly discover a mirror that would change the direction of the walk to head towards a previously discovered mirror, thus causing the walk to fail to be locally relaxed.  We are able to show that the set of times $\mcal{S}$ is \textit{sparse} so long as the set of discovered mirrors is never too dense in a cube of side length $p^{-1}$ (see Lemma~\ref{lem:sparsity}).  This sparsity estimate is what allows us to bridge scales -- it allows us to apply diffusive estimates on the walk, which are only valid at scales larger than $p^{-1}$, to control lattice scale events.

\subsection{Organization of the Paper}
In Section~\ref{sec:bigdriving} we set up a coupling between the mirror walk and a driving Markovian walk.  In Section~\ref{sec:hypotheses} we state the inductive hypotheses we shall work with, and describe the precise structure of the induction argument.
The rest of the paper is then dedicated to the implementation of this strategy.

\medskip

Throughout the paper, $c>0$ denotes a universal constant that may  vary from line to line, but remains independent of $p>0$ (provided $p>0$ is chosen small enough).  When not otherwise specified, we use
the $\ell^\infty$ norm on $\bbZ^d$.

\section{Driving process and regenerated walk}
\label{sec:bigdriving}
In this section we introduce another way to define the Lorentz mirror walk, where mirrors are discovered on the fly along the trajectory, rather than being set at the initial time.

\subsection{The driving process}\label{sec:driving}

The ``driving process''  is the sequence $\{(\tilde{X}(t), \tilde{V}(t),\mathds1 \{ t\in \mcal{T}\},\tilde{m}(t))\}_{t\ge 0}$ of positions $\tld{X}(t)$, velocities $\tld{V}(t)\in\{\pm e_j\}_{j=1}^d$, Bernoulli variables $\mathds 1 \{t\in \mathcal T\}$, and matchings $\tld{m}(t)\in \mbf{M}_d$ denoted by $\{\tilde{W}(t)\}_{t\ge 0}$ and defined as follows.

We fix the initial data $\tld{X}(0)\in \mathbb Z ^d$ and $\tld{V}(0)\in\{\pm e_j\}_{j=1}^d$.

Sample a random subset $\mcal{T}\subset \mathbb{N}=\{0,1,\dots \}$, where each $t\in \mathbb N$ is in the set $\mathcal T$ independently with probability $p$. These are the times in which we ``discover a new mirror". For $t\notin \mcal{T}$ we let $\tld{m}(t) = {\rm id}$ be the identity matching, and for $t\in\mcal{T}$ we let $\tld{m}(t)$ be a uniformly chosen matching in $\mbf{M}_d$. These random uniform matchings are taken independent of each other and of the random set $\mathcal T$.

We define the velocities and the position using the inductive rule $\tilde{V}(t) = \tilde{m}(t)[\tld{V}(t-1)]$ and $\tilde{X}(t) =\tilde{X}(t-1) + \tilde{V}(t-1)$.  Note that $\{\tilde{X}(t)\}_{t\ge 0}$ is simply a non-backtracking random walk that proceeds straight with probability $1-p$ and proceeds in each of the  $2d-1$ directions, not including where it came from, with probability $p/(2d-1)$.

\medskip

\begin{figure}
\begin{tabular}{lcr}
(a)
\begin{tikzpicture}
\draw [dashed] (-1,0)--(1,0);
\draw [dashed] (0,-1)--(0,1);
\draw [thick, ->, gray] (0,1) -- (0,0) -- (1,0);
\draw [thick, ->] (0,-1) -- (0,0);
\end{tikzpicture}
&\quad \quad\quad\quad\quad\quad
&
(b)
\begin{tikzpicture}
\draw [dashed] (-1,0)--(1,0);
\draw [dashed] (0,-1)--(0,1);
\draw [thick, ->, gray] (-1,0) -- (0,0) -- (1,0);
\draw [thick, ->] (0,-1) -- (0,0);
\end{tikzpicture}
\end{tabular}
\caption{The two ways a walk can interact with its past (shown in gray).  In case (a), the walk visits a location where a mirror was previously discovered.  In case (b), the walk attempts to place a mirror at a location where the walk previously ruled out the presence of a mirror.}
\label{fig:twocases}
\end{figure}
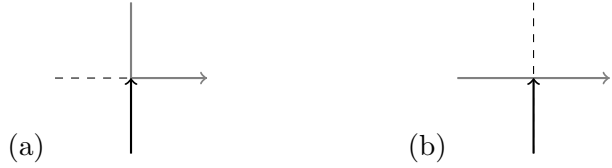

Next, we define a walk $\Phi(\tld{W})=(X,V,m)$ that is driven by $\tld{W}$. We fix the initial conditions $X(0)\in \mathbb Z ^d$ and $V(0)\in\{\pm e_j\}_{j=1}^d$, and consider the following inductive rules. We let $m(0)=\tilde{m}(0)$ and for $t\ge 1$ we set $X(t) = X(t-1)+V(t-1)$ and $V(t)=m(t)[V(t-1)]$ where $m(t)$ is defined according to the following rules:
\begin{enumerate}
\item If $X(t) = X(s)$ for some $s\in [0,t)$, then we set $m(t) = m(s)$.
\item If $X(t)\not= X(s)$ for all $s\in [0,t)$ and $V(t-1)=\tld{V}(t-1)$, then we set
$m(t)=\tld{m}(t)$.
\item If $X(t)\not= X(s)$ for all $s\in [0,t)$ and $V(t-1)\not= \tld{V}(t-1)$, and $\tilde{m}(t)[\tilde{V}(t-1)]=-V(t-1)$, then we also set $m(t)=\tld{m}(t)$.
\item Otherwise, if $X(t)\not= X(s)$ for all $s\in [0,t)$ and $V(t-1)\not= \tld{V}(t-1)$ and $\tilde{m}(t)[\tilde{V}(t-1)]\neq -V(t-1)$, then we define $m(t)$ as follows. We set $m(t)[V(t-1)]=\tilde{m}(t)[\tilde{V}(t-1)]$, $m(t)[\tilde{V}(t-1)]=\tilde{m}(t)[V(t-1)]$ and $m(t)[v]=\tilde{m}(t)[v]$ for any $v\notin \big\{ V(t-1),\tilde{V}(t-1),\tilde{m}(t)[V(t-1)],\tilde{m}(t)[\tilde{V}(t-1)]\big\}$.  A cartoon of this rule is illustrated in Figure~\ref{fig:mirror-coupling}.
\end{enumerate}

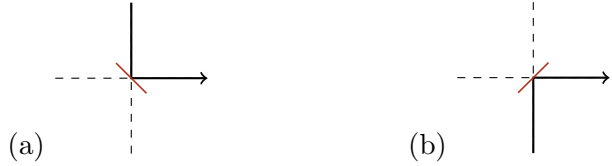
\begin{figure}
\centering
\begin{tabular}{lcr}
(a)
\begin{tikzpicture}
\draw [dashed] (-1,0)--(1,0);
\draw [dashed] (0,-1)--(0,1);
\draw [thick, ->] (0,1) -- (0,0) -- (1,0);
\draw [semithick, BrickRed] (-0.2,0.2) -- (0.2,-0.2);
\end{tikzpicture}
&\quad \quad\quad\quad\quad\quad
&
(b)
\begin{tikzpicture}
\draw [dashed] (-1,0)--(1,0);
\draw [dashed] (0,-1)--(0,1);
\draw [thick, ->] (0,-1) -- (0,0) -- (1,0);
\draw [semithick, BrickRed] (-0.2,-0.2) -- (0.2,0.2);
\end{tikzpicture}
\end{tabular}
\caption{An illustration of coupling rule (4).  The driving walk (a) and the driven walk (b) are initially traveling in opposite directions.  The driving walk then encounters a mirror at some time $t\in\mcal{T}$, and the driven walk encounters an appropriately modified mirror such that the walks exit the mirror traveling in the same direction.}
\label{fig:mirror-coupling}
\end{figure}
As we shall argue below, the walk $\Phi(\tilde{W})$ has the law of the Lorentz mirror walk starting from $X(0)$ in direction $V(0)$.  This coupling, which corresponds to that of \cite{MR4156218} for the Lorentz gas, is chosen to maximize the set of times for which $V(t)=\tilde{V}(t)$ (rather than, for example, to directly minimize $|X(t)-\tilde{X}(t)|$).  More precisely, this coupling has the property that for all $t\in\mcal{T}$ we have $V(t)=\tilde{V}(t)$ unless this is impossible.  There are two reasons this might be impossible: either $X(t)=X(s)$ for some $s<t$ so that the direction $V(t)$ is forced by the history, or $\tilde{V}(t)=-V(t-1)$ which would force the walk to backtrack.
\medskip

To see that $\Phi(\tilde{W})$ has the law of the Lorentz walk, the only point to check is that the distribution of a new mirror $m(t)$ is uniform in $\mbf{M}_d$, given that $\tilde m(t)$ is itself uniformly chosen in $\mbf{M}_d$. If $V(t-1)=\tilde V(t-1)$, there is nothing to show. Consider the alternative case when $V(t-1) = e \ne \tilde e= \tilde V(t-1)$, and set
\footnote{Note that $\mbf{M}_d(e,\tilde e)=\mbf{M}_d(\tilde e,e)$ because of reversibility
in form of $m(-m(v))=-v$.}
\[
\mbf{M}_d(e,\tilde e):=\{m \in \mbf{M}_d \ |\ m [\tilde e] \ne -e\}.
\]
If $\tilde m(t) \notin \mbf{M}_d(e,\tilde e)$, then we are in case (3)
and $m(t)=\tilde m(t)$.
Assume now that $\tilde m(t)$ is uniformly chosen in $\mbf{M}_d(e,\tilde e)$. We follow the rule of case (4), which takes the form $m(t)=\mathcal{M}_{e,\tilde e}\big( \tilde m(t)\big)$, where $\mathcal{M}_{e,\tilde e}$ is a bijection on $\mbf{M}_d(e,\tilde e)$ since
$m \in \mbf{M}_d(e,\tilde e) \implies  m[ e] \ne -\tilde e$. Hence $m(t)$ is also uniformly chosen in $\mbf{M}_d(e,\tilde e)$, which concludes the argument.

\begin{remark}\label{rem:trivial-time}
If we specialize to the case $X(0)=\tilde{X}(0)$ and $V(0)=\tilde{V}(0)$, then we may define the stopping time of interacting with the past
\begin{equation}
\label{tau-int}
    \tau _{\rm int}:= \min \big\{ t\ge 1 : \exists s\in [0,t) \text{ such that }X(s)=X(t) \text{ and either }s\in \{0\}\cup \mathcal T \text{ or }t\in \mathcal T \big\}.
\end{equation}
Note that for any $t<\tau _{\rm int}$ we have that $X(t)=\tilde X(t)$ and $V(t)=\tilde{V}(t)$. Indeed, otherwise we may consider the first time $t<\tau _{\rm int}$ for which $V(t)\neq \tilde{V}(t)$. Since $V(t-1)=\tilde{V}(t-1)$, at time $t$ we cannot be in cases (3) and (4) above. Since $V(t)\neq \tilde{V}(t)$ we cannot be in case (2) and so we are in case (1). Hence, there is some $s\in [0,t)$ such that $X(s)=X(t)$. Moreover, observe that either $W$ or $\tilde{W}$ turned at time $t$ since otherwise $V(t)=V(t-1)=\tilde{V}(t-1)= \tilde{V}(t)$. Thus, it follows that either $t\in \mathcal T$ or $s':=\min \{r\in [0,t): X(r)=X(t)\}\in \mathcal T$  contradicting the fact that $t<\tau _{\rm int}$. This shows that $V(t)=\tilde{V}(t)$ for any $t<\tau _{\rm int}$. Let us also note that the mirror walk $W$ cannot close before time $\tau _{\rm int}$.
\end{remark}

Likewise, if we now take $\tilde{V}(0)$ to be random and uniformly distributed in $\{\pm e_j\}_{j=1}^d$, $\Phi(\tld{W})$ has the law of a mirror walk starting with a uniform random velocity.

\medskip

In what follows $\mathcal F_{t}$ denotes the $\sigma$-algebra generated by  $\{\tilde W(s)\}_{s\le t}$.
In the construction above it is crucial that on the event that $X(t) \notin \{X(s):0\le s<t\}$ and $\tilde{V}(t-1)\neq V(t-1)$, which is measurable in $\mathcal F _{t-1}$, we have
\begin{equation}\label{eq:prob of coupling}
    \mathbb P \big( V(t)=\tilde{V}(t) \mid \mathcal F_{t-1} \big) \ge \tfrac{2d-2}{2d-1}>0.
\end{equation}

\medskip

If the mirror walk returns to the origin at some time $t$ and has the same speed $V(t)=V(0)$ as in time $0$, the mirror walk closes a loop and repeats its past. Let $\tau _{\rm clo}$ be the first time this happens, that is,
\begin{equation}
\label{tau-clo}
    \tau _{\rm clo}:= \min \{t>0: X(t)=0 \text{ and } V(t)=V(0) \} \in \N \cup \{+\infty\}.
\end{equation}
As claimed above, $\tau _{\rm clo} \ge \tau _{\rm int}$.

\subsection{Regenerated mirror walk}\label{sec:regenerated}
We define the regenerated mirror walk $W(t)$ as follows. Let $\{W^{i}_L(s)\}_{i\geq 1}$ be i.i.d.~Lorentz mirror walks starting from the $e_1$ direction with closing times $\{\tauclo^{i}\}_{i\geq 1}$. For $i\ge 0$, define $\taureg ^i:=\sum _{j=1}^i \tauclo ^{j}$ and the walk
\begin{equation}\label{eq:regen.walk.defn}
W(s):= W^{i}_L\big( s - \taureg ^{i-1} \big) \quad \hbox{for } s\in \big[ \taureg ^{i-1},\taureg ^i \big] .
\end{equation}
In words, $W(t)$ sequentially follows the mirror walks $W^{i}_L(s)$ up until they close a loop and then moves onto the next one. We say that $\taureg ^i$ the $i$-th regeneration time of the regenerated walk $W$. Let $\alpha (t)$ be the last regeneration before time $t$.

\medskip

Just like for the mirror walk, one can drive a regenerated mirror walk using a driving walk. Indeed, we simply drive the $i$-th mirror walk $\{W^i_L(s)\}_{s>0}$ in the definition of the regenerated walk using the driving walk $\{\tilde{W}(\tau _{\rm reg}^{i-1}+s)\}_{s\ge 0}$ (in here we also shift the set $\mathcal T$).

\medskip

We can set an initial position $x_0$ and velocity $v_0$ for the regenerated mirror walk by translating and rotating $W$.  From now on, $W$ always denotes the regenerated mirror walk with a chosen initial velocity (being $e_1$ unless otherwise specified), and $W^*$ the regenerated mirror walk with a uniform random initial velocity.

\subsection{Preliminary estimates}
\label{sec:prelims}
We consider a regenerated mirror walk $W$ driven by $\tld{W}$ as described in the previous section. Let $\mcal{M}(t)$ be the set of mirror locations
\begin{equation}
\label{eq:M-def}
    \mathcal M (t):=\{0\} \cup \{X(s):s\in \mathcal T \cap [\alpha (t),t] \} \subset \mathbb{Z}^d.
\end{equation}
This set contains the locations of the mirrors discovered in the time range $[\alpha(t),t]$ by the regenerated walk (that is, since its last regeneration time).\footnote{To be precise, this set is a super set of the locations of mirrors of the regenerated walk. Indeed, if the walk crosses itself at a point which had no mirror, one cannot add a mirror at this later time $t'$ even if $t'\in \mathcal T$.}
Note that $X(t')\in\mathcal M(t)$ does not imply $t'\in\mcal{T}$ (because the walk can revisit a previously discovered mirror).  Also note that $t\in\mcal{T}$ does not imply $V(t+1)\not=V(t)$ (in fact, the discovered mirror could even be the identity).

\medskip

Set $t_*:=\lfloor p^{-1}\log ^3 (1/p) \rfloor $, time scale we shall use throughout this contribution (at this time scale, the walk starts being diffusive).

\medskip

Discovering too many (resp. few) mirrors in some short time interval is likely to slow down (resp. accelerate) the walk too much, which we want to avoid. We thus define the stopping time of discovering too many new mirrors in a short time interval by
\begin{equation*}
\tau _{\rm many}:= \min \big\{ t\ge p^{-1}: |\mathcal T\cap [t-p^{-1},t]|\ge \log ^3(1/p)\big\} ,
\end{equation*}
and the stopping time of walking in a straight line for too long
\begin{equation*}
\tau _{\rm few}:= \min \big\{t\ge t_*: V(t) = V(t') \text{ for all } t'\in[t-t_*,t]\big\}.
\end{equation*}
These stopping times are controlled as follows.

\begin{lemma}
\label{lem:fewhea}
    We have that
    \begin{equation}
        \mathbb P \big( \tau _{\rm many} \wedge \tau _{\rm few}  \le e^{\log ^2(1/p)} \big) \le e^{-c\log ^3(1/p)}.
    \end{equation}
\end{lemma}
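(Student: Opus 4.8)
Write $E:=e^{\log^2(1/p)}$. Since $\tau_{\rm many}\wedge\tau_{\rm few}\le E$ implies $\tau_{\rm many}\le E$ or $\tau_{\rm few}\le E$, it suffices to bound each of these two probabilities by $e^{-c\log^3(1/p)}$. In both cases the scheme is the same: prove a tail estimate of order $e^{-2\log^3(1/p)}$ for a single time, then take a union bound over the at most $E=e^{\log^2(1/p)}$ relevant times; the bound survives because $\log^3(1/p)\gg\log^2(1/p)$.

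\textbf{The term $\tau_{\rm many}$.} For a fixed $t\ge p^{-1}$, the quantity $|\mathcal T\cap[t-p^{-1},t]|$ is a sum of at most $p^{-1}+2$ independent Bernoulli$(p)$ variables (recall that the set $\mathcal T$ of discovery times consists of i.i.d.\ Bernoulli$(p)$ marks, a property preserved by the concatenation defining the regenerated walk), hence is stochastically dominated by a Binomial random variable of mean at most $2$. A standard Poisson-type tail bound gives
\[
\mathbb P\big(|\mathcal T\cap[t-p^{-1},t]|\ge \log^3(1/p)\big)\ \le\ \Big(\tfrac{2e}{\log^3(1/p)}\Big)^{\log^3(1/p)}\ \le\ e^{-2\log^3(1/p)}
\]
for $p$ small (here we use $\log\log(1/p)\to\infty$). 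A union bound over $t\in\{p^{-1},\dots,\lceil E\rceil\}$ yields $\mathbb P(\tau_{\rm many}\le E)\le E\cdot e^{-2\log^3(1/p)}\le e^{-\log^3(1/p)}$.

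\textbf{The term $\tau_{\rm few}$.} If the walk is straight on some window $[t-t_*,t]$ with $t\le E$ then, setting $L:=\lfloor t_*/2\rfloor$, it is straight on $[t_0,t_0+L]$ for one of the $\le 2E$ starting points $t_0$ in the arithmetic progression $L\,\mathbb N$; so it is enough to bound $\mathbb P(W\text{ is straight on }[t_0,t_0+L])$ for fixed $t_0$. The key input is a uniform lower bound on the chance of turning when the walk steps onto a fresh site: conditionally on $\mathcal F_{s-1}$, on the event $X(s)\notin\{X(r):r<s\}$, one has $\mathbb P(V(s)\neq V(s-1)\mid\mathcal F_{s-1})\ge cp$. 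This follows from the case analysis of the coupling rules (2)--(4): given $s\in\mathcal T$, in each case $m(s)[V(s-1)]$ is, as a function of the uniform matching $\tilde m(s)$, spread over the admissible directions so as to avoid $V(s-1)$ with probability $\ge \tfrac{2d-2}{2d-1}$ --- exactly the computation behind~\eqref{eq:prob of coupling}, which does not require $W=\tilde W$. Now, conditionally on $\mathcal F_{t_0}$ and on $W$ being straight on $[t_0,t_0+L]$, the positions are the collinear lattice points $X(t_0)+jV(t_0)$, $0\le j\le L$ (distinct by straightness); the index set $G_{t_0}:=\{1\le j\le L:\ X(t_0)+jV(t_0)\notin\{X(s):s\le t_0\}\}$ of fresh steps is $\mathcal F_{t_0}$-measurable, and peeling off one factor per fresh step gives
\[
\mathbb P\big(W\text{ straight on }[t_0,t_0+L]\ \big|\ \mathcal F_{t_0}\big)\ \le\ (1-cp)^{|G_{t_0}|}.
\]
Since $pL\asymp \log^3(1/p)$, this is at most $e^{-c\log^3(1/p)}$ on the event $\{|G_{t_0}|\ge L/2\}$, so the proof of the $\tau_{\rm few}$ bound reduces to controlling the complementary event $B_{t_0}:=\{|G_{t_0}^c|\ge L/2\}$: that the forward ray from $(X(t_0),V(t_0))$ meets the walk's past in more than $L/2$ points.

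\textbf{Main obstacle and how to close it.} The crux is the bound $\sum_{t_0\le E}\mathbb P(B_{t_0})\le e^{-c\log^3(1/p)}$, i.e.\ ruling out that the walk is poised to retrace a long ($\gtrsim t_*$) straight corridor of its own history aligned with its current velocity. I would argue as follows. The walk can deposit lattice points on a fixed axis-parallel line $\ell$ only (i)~during a straight run parallel to $\ell$ lying inside $\ell$, contributing one consecutive block, or (ii)~by crossing $\ell$ transversally, contributing a single point. Thus accumulating $>L/2\asymp p^{-1}\log^3(1/p)$ points of the past on one line forces \emph{either} a straight run of length $\gtrsim L$ before time $E$ --- an event that is itself of the type we are bounding, so it is absorbed by a maximal/bootstrap argument (equivalently, by first running the estimate above with the weaker a priori bound that straight runs have length $\le E$) --- \emph{or} $\gtrsim\log^3(1/p)$ separate visits to (essentially) the same line in the same direction, which I would exclude by a crude first-moment self-intersection count using only that the walk moves at unit speed and that, up to a segment's interaction time $\tau_{\rm int}$, it coincides with the non-backtracking random walk $\tilde W$ (Remark~\ref{rem:trivial-time}), whose excursions along any fixed line have geometric length of mean $O(p^{-1})\ll t_*$; crossing a regeneration time resets the relevant history. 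Combining the $\tau_{\rm many}$ and $\tau_{\rm few}$ estimates gives $\mathbb P(\tau_{\rm many}\wedge\tau_{\rm few}\le E)\le e^{-c\log^3(1/p)}$, as claimed. The main technical work is the self-intersection input for $B_{t_0}$; everything else is the routine Chernoff-plus-union-bound bookkeeping above.
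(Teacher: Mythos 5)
Your treatment of $\tau_{\rm many}$ is exactly the paper's: Chernoff for a fixed window, then a union bound over the at most $e^{\log^2(1/p)}$ windows, winning because $\log^3\gg\log^2$. That part is fine.

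For $\tau_{\rm few}$, however, your proposal has a genuine gap, and it stems from choosing the wrong probability space to work in. You try to argue through the driving process and the coupling rules, peeling off a factor $(1-cp)$ at each ``fresh'' step of the candidate straight run; this forces you to control the set $G_{t_0}^c$ of non-fresh steps, i.e.\ the event $B_{t_0}$ that the forward ray from $(X(t_0),V(t_0))$ meets more than $L/2$ points of the walk's past. You then only \emph{sketch} a bound for $B_{t_0}$, and the sketch is essentially circular: the ``maximal/bootstrap'' branch appeals to the very estimate you are trying to prove, and the ``first-moment self-intersection'' branch invokes the coincidence $W=\tilde W$ up to $\tau_{\rm int}$, which breaks down precisely in the regime where self-intersections accumulate (the walk does \emph{not} stay coupled to $\tilde W$ after a single interaction, and there is no bound here on how many interactions occur before time $e^{\log^2(1/p)}$). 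As written, $\sum_{t_0}\P(B_{t_0})\le e^{-c\log^3(1/p)}$ is an unproven lemma of comparable difficulty to the original statement.

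The paper's proof of the $\tau_{\rm few}$ bound sidesteps all of this by working directly in the static mirror environments of the independent Lorentz walks $W^i_L$ that compose the regenerated walk, rather than through the driving process. The observation is: if $\tau_{\rm few}\le \bar T$, then some single $W^i_L$ ($i\le\bar T$) travels straight for at least $\tfrac12 t_*$ steps starting from some vertex $x$ with $\|x\|\le\bar T$ in some direction $v_0$. For a \emph{fixed} triple $(i,x,v_0)$, the light ray from $(x,v_0)$ in the $i$-th environment is deterministic, it traverses $\tfrac12 t_*$ \emph{distinct} lattice sites $x+jv_0$ (distinct because it is a straight line, regardless of what the walk did at earlier times), and the mirror environment at those sites is i.i.d.\ Bernoulli$(p)$ with a uniform matching; hence the probability of going straight is $\le (1-cp)^{t_*/2}=e^{-c\log^3(1/p)}$, with no issues about ``freshness.'' A union bound over the $O(\bar T^{d+1})$ triples $(i,x,v_0)$ finishes. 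The deterministic/quenched viewpoint on a fixed environment is what your proposal is missing; once you adopt it, the whole $B_{t_0}$ apparatus becomes unnecessary.
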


\begin{proof}
Let $\bar{T}:=\lfloor e^{\log ^2(1/p)} \rfloor $ and consider first the stopping time $\tau _{\rm many}$. The random variables $\mathds 1\{t\in \mathcal T\}$ for $t\in \mathbb N$ are just independent Bernoulli variables with success probability $p$ and therefore for any fixed $t$ we have $\mathbb P \big( \mathcal T \cap [t-p^{-1},t] \ge \log ^3(1/p) \big) \le e^{-c\log ^3(1/p)}$ and the estimate $\mathbb P ( \tau _{\rm many} \le \bar T ) \le e^{-c\log ^3(1/p)}$ follows from a union bound.

We turn to the proof of the corresponding bound for $\tau _{\rm few}$. To this end, recall the construction of the regenerated walk $W$ using the independent mirror walks $W^i_L$. If $\tau _{\rm few} \le \bar T$ then there is $i\le \bar T$ and $t\le \bar T$ for which $V^i_L(s)=V^i_L(t)$ for all $s\in [t-\tfrac{1}{2}t_*,t]$, where $V^i_L$ is the speed process of $W^i_L$. If this happens then in the $i$-th mirror environment there are a vertex $x\in \mathbb Z ^d$ with $\|x\|\le \bar T$ and a direction $v_0$ such that the light ray starting from $x$ in direction $v_0$ does not change direction for time $\tfrac{1}{2}t_*$. This happens with probability $e^{-c\log ^3(1/p)}$ for a specific $i\le T$, a vertex $x\in \mathbb Z ^d$ with $\|x\|\le \bar T$ and a direction $v_0\in \{\pm e_1,\dots ,\pm e_d\}$, and therefore the result follows from a union bound.
\end{proof}

\section{Induction hypotheses and structure of the proof}
\label{sec:hypotheses}
In this section we introduce our inductive assumptions, prove Theorem~\ref{th:diffusive} using these assumptions and explain the structure of the inductive proof.

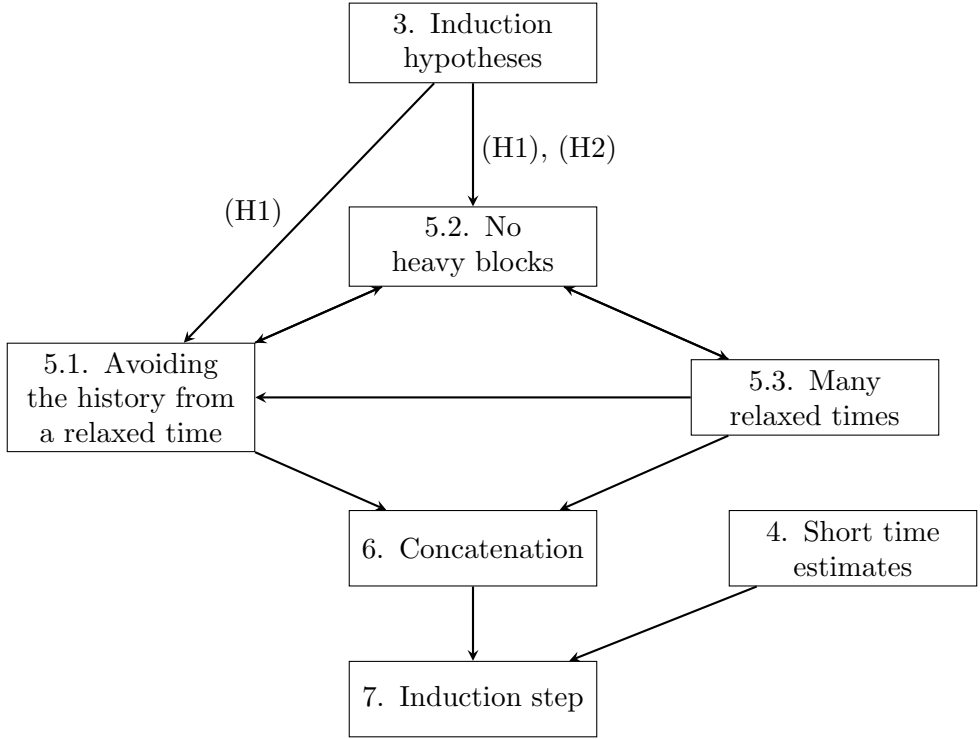
\begin{figure}[t]
\begin{center}
\begin{tikzpicture}[node distance=2cm]

    \node (H4) [process] {3. Induction hypotheses};
    \node (H52) [process, below of=H4,yshift=-0.7cm] {5.2. No heavy blocks};
    \node (H51) [process, left of=H52, xshift=-2.5cm, yshift=-2cm] {5.1. Avoiding the history from a relaxed time};
    \node (H53) [process, right of=H52, xshift=2.5cm, yshift=-2cm] {5.3. Many relaxed times};
    \node (H6) [process, below of=H52, yshift=-2cm] {6. Concatenation};
    \node (H7) [process, below of=H6] {7. Induction step};
    \node (H3) [process, right of=H6, xshift=3cm] {4. Short time estimates};
    \draw [arrow] (H51) -- (H52);
    \draw [arrow] (H52) -- (H51);
    \draw [arrow] (H4) -- node[left,xshift=-0.2cm] {(H1)} (H51);
    \draw [arrow] (H4) -- node[above, xshift=1cm, yshift=-0.4cm] {(H1), (H2)} (H52);
    \draw [arrow] (H52) -- (H53);
    \draw [arrow] (H51) -- (H6);
    \draw [arrow] (H53) -- (H6);
    \draw [arrow] (H53) -- (H52);
    \draw [arrow] (H6) -- (H7);
    \draw [arrow] (H3) -- (H7);
    \draw [arrow] (H53) -- (H51);
\end{tikzpicture}
\end{center}
\caption{The structure of the argument.  Section 5 sets up the estimates necessary to perform the concatenation argument in Section 6.  In Section 7 the proof is completed using the short-time estimates from Section 4 with the concatenation argument to perform the induction.}
\label{fig:arg-structure}
\end{figure}

\subsection{Induction hypotheses and main result}
Recall that $W=(X,V,m)$ is the {\bf regenerated} mirror walk starting from $0$ with $V(0)=e_1$. The inductive assumption at time $T$ is twofold:
\begin{h1}
\label{h1}
Fix $\epsilon :=0.01$. For any $z\in \mathbb Z ^d$ we have\footnote{When using Assumption~$\Hypone{T}$, we will often bound the right hand side of \eqref{eq:h1bound} by $(pT)^{-d/2} p^{-2\epsilon}$ using that $T\le e^{\log ^2(1/p)}$ and that $p$ is sufficiently small.}
\begin{equation}\label{eq:h1bound}
    \mathbb P \big( \|X(T)-z\| \le p^{-1} \big) \le (pT)^{-d/2} p^{-\epsilon }e^{(\log T)^{1/3}}.
\end{equation}
\end{h1}

\begin{h2}
\label{h2}
For any $s\le t\le T$ we have that
\begin{equation*}
    \mathbb P \Big( \|X(t)-X(s)\| \ge  \log ^8(1/p) \sqrt{(t-s)/p}  \Big) \le e^{- 2\log ^2p}.
\end{equation*}
\end{h2}
We write $\Hypone{t}$ to mean the statement that \hyperref[h1]{\bf{(H1)}} holds at time $t$, and likewise with $\Hyptwo{t}$.  Moreover,
we write $\Hyp(T)$ to mean the statement that
$\Hypone{t}$ and $\Hyptwo{t}$ holds for all $0\leq t\leq T$.
\begin{remark}
\label{rem:trivial}
Assumption $\Hypone{T}$ trivially holds when $T\le p^{-1-2\epsilon/d}$ while Assumption~$\Hyptwo{T}$ trivially holds for $T< p^{-1} \log^{16} (1/p)$.
\end{remark}

Our main result is the validity of the induction hypotheses for long times.
\begin{thm}\label{th:main}
    Let $\bar T:= \lfloor e^{\log ^2(1/p)} \rfloor$. For all $p$ sufficiently small, $\Hyp(\bar T)$
    holds, and moreover for all $p^{-5/4} \le T\le \bar T$ we have
    \begin{equation}\label{e.th:main-var}
    \Big|\frac p T\,\E \big[ \|X(T)\|_2^2\big] -\frac{2d-1}{d-1}
    \Big| \le  \tfrac12 p^{1/9}.
    \end{equation}
\end{thm}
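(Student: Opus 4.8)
The plan is to prove Theorem~\ref{th:main} by an induction that doubles the time scale, $T\mapsto 2T$, seeded by the short-time analysis of Section~4 (the Kesten--Papanicolaou argument recalled in Section~\ref{sec:KP}). The base case should be the assertion that, for $T$ up to a scale of order $t_*=\lfloor p^{-1}\log^3(1/p)\rfloor$ --- and up to $p^{-5/4}$ for the variance --- the regenerated walk $W$ stays coupled to its driving non-backtracking walk $\tilde W$ with high probability (self-interactions being rare on this scale, cf.\ Remark~\ref{rem:trivial-time} and Lemma~\ref{lem:fewhea}), so that $W$ inherits both the anticoncentration bound $\Hypone{T}$ and the diffusive tail bound $\Hyptwo{T}$ of $\tilde W$, and $\tfrac pT\,\E\|X(T)\|_2^2=\tfrac{2d-1}{d-1}+O(p)$. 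The constant is exactly the diffusion constant of the driving walk: a straight run continues with probability $q=(1-p)+\tfrac{p}{2d-1}$, so $\E[\tilde V(n)\cdot\tilde V(0)]=q^{n}$ and $\tfrac pn\,\E\|\tilde X(n)\|_2^2\to p\cdot\tfrac{1+q}{1-q}=\tfrac{2d-1}{d-1}$. Remark~\ref{rem:trivial} already disposes of the very shortest scales, so Section~4 only has to fill a modest window above $p^{-1}$.

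For the inductive step, assume $\Hyp(T)$ together with the variance estimate up to $T$; the task is to deduce $\Hyp(2T)$ and the variance estimate at $2T$. The engine is the concatenation property of Section~6: the law of $X(2T)$ is close, at the relevant scale, to that of $X^{(1)}(T)+X^{(2)}(T)$ for two independent copies. To prove it one runs the driving coupling of Section~\ref{sec:bigdriving} past time $T$ and shows that, after a suitably chosen \emph{locally relaxed} time near $T$, with probability bounded away from $0$ the walk never again interacts with its past. Three ingredients from Section~5 feed into this: (i) from a locally relaxed time the probability of never revisiting the past is $\ge c>0$, which uses $\Hypone{\cdot}$ to rule out recollisions at the diffusive scale; (ii) the set of discovered mirrors $\mcal M(t)$ is never too dense in a cube of side $p^{-1}$ (``no heavy blocks''), which uses $\Hypone{\cdot}$ and $\Hyptwo{\cdot}$ together; and (iii) the sparsity estimate, Lemma~\ref{lem:sparsity}, that the exceptional set $\mcal S$ of times at which the walk could be turned toward a previously discovered mirror is sparse whenever no heavy blocks occur --- this is the device that converts diffusive control at scales $\gg p^{-1}$ into control of lattice-scale recollisions. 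Ingredients (ii) and (iii) are mutually dependent and must be run together; the output is the concatenation estimate with an error that is subdiffusive.

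With the concatenation estimate in hand, Section~7 closes the induction. For $\Hypone{2T}$, one convolves the coarse-grained ($p^{-1}$-scale) endpoint distribution with an independent copy; the point is that it flattens by roughly $2^{-d/2}$, so the bound \emph{sharpens} from time $T$ to time $2T$ exactly as $\Hypone{2T}$ demands, the room coming from the slack $p^{-\epsilon}e^{(\log T)^{1/3}}$, whose multiplicative increments over doublings telescope from $e^{(\log t_*)^{1/3}}$ up to $e^{(\log\bar T)^{1/3}}=e^{\log^{2/3}(1/p)(1+o(1))}$, comfortably inside the horizon $\bar T=e^{\log^2(1/p)}$. For $\Hyptwo{2T}$, split any increment $X(t)-X(s)$ with $s\le t\le 2T$ across the midpoint into at most two pieces of length $\le T$, apply $\Hyptwo{T}$ to each with the triangle inequality and a union bound over regenerations; since $\Hyptwo{\cdot}$ fails with probability at most $e^{-2\log^2 p}$ this costs only a constant. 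Finally $\E\|X(2T)\|_2^2=2\,\E\|X(T)\|_2^2$ up to a relative error from the concatenation error and from the cross term $\E[X^{(1)}(T)]\cdot\E[X^{(2)}(T)]$, which is $O(p^{-2})$ and hence relatively $O(p^{-1}/T)\le O(p^{1/4})$ once $T\ge p^{-5/4}$; iterating over the $O(\log^2(1/p))$ doublings these errors sum to $o(p^{1/9})$, preserving the constant $\tfrac{2d-1}{d-1}$ with the claimed accuracy.

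The main obstacle is the concatenation step, and inside it the scale-bridging: the events that break the Markov property --- recollisions with already-discovered mirrors --- are decided at the unit lattice scale, whereas the only information available inductively ($\Hypone{\cdot}$, $\Hyptwo{\cdot}$) describes the walk on scales $\gg p^{-1}$ where it is diffusive; the sparsity of $\mcal S$ is what lets this diffusive input govern lattice-scale recollisions, and making it interlock with ``no heavy blocks'' while keeping the accumulated displacement between the driven and driving walks subdiffusive is the technical heart. A secondary difficulty is that the raw $\ell^\infty$ bound $\Hypone{\cdot}$ does not by itself flatten under convolution --- a distribution with a concentrated bump plus a diffusive tail would satisfy $\Hypone{T}$, $\Hyptwo{T}$ and the variance bound yet violate $\Hypone{2T}$ when $d\ge4$ --- so the concatenation mechanism together with the short-time (near-Gaussian) input must be used to exclude such concentration; and one must budget the slack carefully across the $\Theta(\log^2(1/p))$ doublings so that neither the $\Hypone{\cdot}$ prefactor nor the diffusion constant drifts out of tolerance.
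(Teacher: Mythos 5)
Your high-level architecture matches the paper: short-time Kesten--Papanicolaou input, relaxed times, heavy blocks, sparsity of $\mcal S$, concatenation, induction. But the two specific inductive steps you propose for $\Hypone{\cdot}$ and $\Hyptwo{\cdot}$ do not close, and the paper uses genuinely different mechanisms there.

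For $\Hypone{\cdot}$, you propose a doubling $T\mapsto 2T$ and claim the $p^{-1}$-scale endpoint distribution ``flattens by roughly $2^{-d/2}$'' under convolution with one independent copy. It does not. The only thing one free convolution gives is $\|\mu*\nu\|_\infty\le\|\mu\|_\infty\|\nu\|_1$, i.e.\ no gain at all; to extract a factor $2^{-d/2}$ one would need near-Gaussianity of $\mu$ on the $p^{-1}$ scale, which is strictly more than $\Hypone{T}$ plus $\Hyptwo{T}$ provide (you acknowledge this with your bump-plus-tail counterexample, but then leave the gap open). Worse, even if each doubling lost only a tiny multiplicative factor, there are $\Theta(\log^2(1/p))$ doublings between $t_*$ and $\bar T=e^{\log^2(1/p)}$, while the total slack $p^{-\epsilon}e^{(\log T)^{1/3}}$ only absorbs a factor $e^{O(\log^{2/3}(1/p))}$ across the whole run. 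The paper sidesteps this entirely: at each step $T\mapsto T+1$ it concatenates $n\approx (pT)^{1/3}$ independent blocks of equal length and applies a Fourier-analytic anticoncentration inequality (Lemma~\ref{lem:felipe}, proved in the appendix via a compactly supported Fourier mollifier and $|\widehat\mu|^n\le e^{-n|\xi|^2/4}$ near the origin) to get a bound $Cr^d(t/p)^{-d/2}$ at a coarse scale $r\approx T^{1/3}p^{-2/3}\log^{25}(1/p)$ (Corollary~\ref{cor:global}), and then convolves that with a \emph{single} short piece of length $t_2=T^{2/3}p^{-1/3}$ using $\Hypone{t_2}$ to resolve down to scale $p^{-1}$. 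This two-scale split is what lets the anticoncentration be sharp at the right scale without any per-step drift.

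For $\Hyptwo{\cdot}$, your midpoint split gives, after a triangle inequality, a bound $2\log^8(1/p)\sqrt{(t-s)/(2p)}=\sqrt2\,\log^8(1/p)\sqrt{(t-s)/p}$, i.e.\ the constant in front of $\sqrt{(t-s)/p}$ \emph{degrades by $\sqrt2$ at every doubling}. Over $\Theta(\log^2(1/p))$ doublings this blows up by $e^{\Theta(\log^2(1/p))}$, destroying the bound. The paper instead concatenates $n=\lfloor\sqrt{p(t-s)}\rfloor$ pieces of length $\approx\sqrt{(t-s)/p}$, applies the bounded-increment concentration inequality of Lemma~\ref{lem:freedman} (Freedman/Bernstein type) to the sum, and pays a single concatenation error $np^{-1}\log^7(1/p)\lesssim\sqrt{(t-s)/p}\,\log^7(1/p)$; there is no per-step constant drift because the argument goes directly from diffusive pieces to the Gaussian tail.

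Finally, a small structural note: the paper's induction is $T\mapsto T+1$ rather than $T\mapsto 2T$, and the variance estimate alone is proved by an internal $t\mapsto 2t$ iteration inside the $T\mapsto T+1$ step (Lemma~\ref{lem:var}); this is roughly what you describe, and it is the one place where a doubling argument does work, because the per-step error in $p\sigma_t^2/t$ decreases like $t^{-1/2}$ and so sums.
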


\subsection{Proof of Theorem~\ref{th:diffusive}}

As an immediate consequence of Theorem~\ref{th:main}, we obtain that the trajectory of a non-regenerated mirror walk does not close with positive probability up to time $\bar T:=\lfloor e^{\log ^2(1/p)} \rfloor $.
\begin{corollary}\label{cor:clo}
    We have that $\mathbb P \big( \tauclo \le e^{\log ^2 (1/p)} \big) \le p^{1/3}$.
\end{corollary}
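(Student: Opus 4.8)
The plan is to deduce the bound on the non-regenerated closing time $\tauclo$ from the diffusivity estimate \eqref{e.th:main-var} of Theorem~\ref{th:main}, using the relation between the regenerated walk $W$ and the i.i.d.\ copies $W^i_L$ from which it is built. Recall that $\taureg^i = \sum_{j=1}^i \tauclo^j$ and that $\alpha(t)$ denotes the last regeneration before $t$. The key observation is that if the first Lorentz walk $W^1_L$ closes early, i.e.\ $\tauclo = \tauclo^1 \le \bar T$ where $\bar T = \lfloor e^{\log^2(1/p)}\rfloor$, then $\taureg^1 \le \bar T$ and the regenerated walk $W$ returns to the origin at the deterministic-in-$\omega$ time $\taureg^1 \le \bar T$. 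Since $X(\taureg^1)=0$, the displacement $\|X(\taureg^1)\|_2^2$ vanishes, which is grossly atypical for a genuinely diffusive walk at time $\simeq \taureg^1$. The idea is therefore: first, show that $\{\tauclo \le \bar T\}$ forces the regenerated walk to be near the origin at some time in a range where \hyperref[h1]{\bf{(H1)}} gives a nontrivial anticoncentration bound; second, sum the resulting small probabilities over the (sub-exponentially many) candidate regeneration times.

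First I would reduce to the event that there is some time $s \le \bar T$ with $X(s) = 0$ \emph{and} $s$ is a regeneration time of $W$. Indeed, on $\{\tauclo \le \bar T\}$, taking $s = \taureg^1 = \tauclo^1 = \tauclo$ works, so $\mathbb P(\tauclo \le \bar T) \le \mathbb P(\exists\, s \in [1,\bar T] : X(s)=0,\ s = \taureg^i \text{ for some } i\ge 1)$. Now I split on the size of $s$. For $s \le p^{-5/4}$ this is controlled by a crude first-moment/combinatorial bound: a regeneration requires the corresponding Lorentz walk $W^j_L$ to close, and closing within time $p^{-5/4}$ forces, by Remark~\ref{rem:trivial-time}, an early interaction with the past ($\tauclo^j \ge \tau_{\rm int}^j$), which has probability $\le e^{-c\log^3(1/p)}$ — this is essentially the short-time content needed from Section~4, and in any case is $\le p^{100}$. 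Summing over the at most $p^{-5/4}$ such times gives a negligible contribution. For $p^{-5/4} \le s \le \bar T$ I would apply \hyperref[h1]{\bf{(H1)}} at time $s$ with $z=0$: on the event $X(s)=0$ we certainly have $\|X(s)-0\|\le p^{-1}$, so $\mathbb P(X(s)=0) \le (ps)^{-d/2} p^{-\epsilon} e^{(\log s)^{1/3}}$. Since $d \ge 4$ and $s \ge p^{-5/4}$, $(ps)^{-d/2} \le (p\cdot p^{-5/4})^{-d/2} = p^{d/8} \le p^{1/2}$, which (after absorbing $p^{-\epsilon}e^{(\log\bar T)^{1/3}} = p^{-0.01}e^{\log^{2/3}(1/p)}$, a sub-polynomial factor) beats $1$ by a polynomial margin; summing over the $\le \bar T = e^{\log^2(1/p)}$ candidate times $s$ still leaves a bound of the form $e^{\log^2(1/p)} \cdot p^{d/8 - o(1)} \le p^{1/3}$ for $p$ small, since $d/8 \ge 1/2 > 1/3$ and $e^{\log^2(1/p)}$ is sub-polynomial. (If the $d/8$ margin is too tight against $\bar T$ one instead restricts to $i=1$, i.e.\ $s = \tauclo^1$, which already suffices since that is exactly the event $\{\tauclo \le \bar T\}$; then no union over $i$ is needed, only a union over the value of $s$, and the same $(ps)^{-d/2}$ bound with $s\in[p^{-5/4},\bar T]$ closes it.)

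The main obstacle I anticipate is the bookkeeping of the union bound over times $s$: one must make sure that the sub-exponential factor $\bar T = e^{\log^2(1/p)}$ coming from summing over $s \le \bar T$, together with the sub-polynomial correction $p^{-\epsilon}e^{(\log s)^{1/3}}$ in \hyperref[h1]{\bf{(H1)}}, is comfortably dominated by the polynomial gain $(ps)^{-d/2} \le p^{d/8}$ available for $s \ge p^{-5/4}$ in dimension $d\ge 4$. Since $d/8 \ge 1/2$ strictly exceeds the target exponent $1/3$, and $e^{\log^2(1/p)}$ and $e^{(\log\bar T)^{1/3}}$ are both $p^{-o(1)}$, this is a matter of choosing $p$ small and carefully tracking constants; it is routine but needs care because the window for $s$ is exponentially long. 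The only other point requiring a line of argument is handling $s < p^{-5/4}$, where \hyperref[h1]{\bf{(H1)}} is vacuous: there one invokes the short-time estimate (Remark~\ref{rem:trivial-time} plus the elementary bound that a Lorentz walk is unlikely to self-intersect before time $p^{-5/4}$, of probability at most $e^{-c\log^3(1/p)}$), so that the contribution of small times is $\le p^{-5/4} e^{-c\log^3(1/p)} \ll p^{1/3}$.
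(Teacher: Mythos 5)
Your high-level plan — reduce to an anticoncentration estimate on the regenerated walk via (H1), plus a separate short-time bound — is the right one and matches the paper, but the execution of the union bound has a genuine gap that makes the argument fail in dimension $d\ge 4$ (in fact in every $d\le 10$).

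The issue is the sum over times $s$. You claim $e^{\log^2(1/p)}$ is a \emph{sub}-polynomial factor, but it is \emph{super}-polynomial: $\log\big(e^{\log^2(1/p)}\big)=\log^2(1/p)\gg M\log(1/p)$ for any fixed $M$. So the crude bound $\bar T\cdot p^{d/8-o(1)}$ diverges as $p\to 0$. The sharper computation does not save you either: summing the actual (H1) bound over integer times gives
\[
\sum_{s\ge p^{-5/4}}(ps)^{-d/2}\;\approx\; p^{-d/2}\big(p^{-5/4}\big)^{1-d/2}\;=\;p^{-5/4+d/8},
\]
which for $d=4$ is $p^{-3/4}\gg 1$, and remains divergent for all $d\le 10$. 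The culprit is that you are paying for one candidate time per lattice step, whereas (H1) only gives you anticoncentration at the scale $p^{-1}$, so you should only be paying one candidate time per $p^{-1}$ steps. The paper's proof does exactly this: it observes that $\tauclo=s$ with $X(s)=0$ implies $\|X(j\lfloor p^{-1}\rfloor)\|\le p^{-1}$ for the nearest multiple $j\lfloor p^{-1}\rfloor$ of $\lfloor p^{-1}\rfloor$, and then unions only over the $\approx p\bar T$ values of $j$. This gives
\[
p^{-2\epsilon}\sum_{j\ge j_0} j^{-d/2}\;\approx\; p^{-2\epsilon}\,j_0^{1-d/2}\;=\;p^{-2\epsilon}\,p^{(d/2-1)/2}\;\le\; p^{1/3}
\]
for $d\ge 4$ and $j_0\approx p^{-1/2}$. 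Without this rescaling of the union, your argument cannot close.

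Two smaller remarks on the short-time part. First, your claimed bound $\mathbb{P}(\tau_{\rm int}\le p^{-5/4})\le e^{-c\log^3(1/p)}$ (or $p^{100}$) is not what Section~4 gives; Proposition~\ref{prop:coup-for-variance} only yields $p^3T^2\log^6(1/p)$, which at $T=p^{-5/4}$ is $p^{1/2}\log^6(1/p)$. This is still much smaller than $p^{1/3}$, so the conclusion survives, but the stated bound is wrong. Second, there is no need to ``sum over the at most $p^{-5/4}$ such times'' — the event $\{\tauclo\le p^{-5/4}\}$ is already contained in $\{\tau_{\rm int}\le p^{-5/4}\}$ with no union, since $\tauclo\ge\tau_{\rm int}$ by Remark~\ref{rem:trivial-time}; your extra union would in fact spoil the bound. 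The paper handles this part differently (and at the slightly longer threshold $p^{-3/2}$) via an elementary ray-crossing argument: to close, the walk must cross the ray $\{-s\cdot m(0)[-e_1]:s\ge 0\}$ and discover a mirror there, and each crossing costs probability $p$, giving $\mathbb{P}(\tauclo\le p^{-3/2})\le p\cdot 2p^{-1/2}+\mathbb{P}(|\mathcal{M}(p^{-3/2})|\ge 2p^{-1/2})\le 3\sqrt p$. Either route works for the short-time piece; the union-bound rescaling for the long-time piece is the essential missing ingredient.
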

\begin{proof}[Proof of Corollary~\ref{cor:clo}]
    First we claim that $\mathbb P (\tauclo \le p^{-3/2}) \le 3\sqrt{p}$. Indeed, in order to close before time $p^{-3/2}$, we have to cross the ray $\{-s \cdot m(0)[-e_1]:s\ge 0\}$ and find a new mirror to turn and enter the ray. The number of times the walk enters this ray up to time $t$ is at most the number of mirrors discovered by this time, and every time it crosses, it has probability $p$ to find a new mirror (and possibly turn). Thus
    \begin{equation*}
     \mathbb P (\tauclo \le p^{-3/2}) \le p \cdot (2p^{-1/2}) +\mathbb P \big( |\mathcal M (p^{-3/2})|\ge 2p^{-1/2} \big) \le 3\sqrt{p},
    \end{equation*}
    where we used that the number of mirrors discovered is bounded by $|\mathcal T \cap [0,t]|$.
    Next, before the walk closes it is identical to the regenerated walk and we can use the transition probability bounds from Assumption~\hyperref[h1]{\bf{(H1)}}. Letting $j_0:=\lfloor p^{-1/2}  \rfloor $ and $j_1:=\lfloor e^{\log ^2(1/p)} \rfloor$ we obtain, using $d\ge 4$,
        \[\mathbb P \big( p^{-3/2} \le \tauclo \le e^{\log ^2(1/p)} \big) \le \sum _{j=j_0}^{j_1} \mathbb P \big( \|X(j \lfloor p^{-1} \rfloor )\|\le p^{-1} \big) \le p^{-2\epsilon }\sum _{j=j_0}^{j_1} j^{-d/2}\le p^{1/3}.\qedhere \]
\end{proof}
We may now proceed with the proof of Theorem~\ref{th:diffusive}.
On the event $\tauclo>T$ we have $X_L(T)=X(T)$, and by Cauchy-Schwarz' inequality,
\begin{eqnarray*}
{\Big|\frac pT \mathbb E[\| X(T)\|_2^2]- \frac pT \mathbb E[\|X_L(T)\|_2^2] \Big|}
&\le& \frac pT\, \mathbb E\Big[ (\|X(T)\|_2^2+\|X_L(T)\|_2^2) \mathds1_{\tauclo<T} \Big]
\\
&\le &2\, \frac pT \,\mathbb E\Big[ \max_{0\le t \le T}\| X(t)-X(0)\|_2^4 \Big] ^\frac12 \mathbb P(\tauclo<T)^\frac12 \,\le \, \tfrac12 p^{1/9},
\end{eqnarray*}
where in the second inequality we also used that $\|X_L(T)\|_2\le \max _{t\le T} \|X(t)\|_2$ if $\tau _{\rm clo}<T$ and $X(0)=0$, and in the last inequality we used Assumption~\hyperref[h2]{\bf{(H2)}} for the first factor and Corollary~\ref{cor:clo} for the second factor. This entails the variance estimate of Theorem~\ref{th:diffusive} in combination with \eqref{e.th:main-var}.

\subsection{Structure of the proof of Theorem~\ref{th:main}}

To complete the induction argument we need a short-time estimate and an inductive step.  The short-time estimates are completed in Section~\ref{sec:KP} using an adaptation of the Kesten-Papanicolaou argument, which amount to an exact coupling of $\tilde W$ and $W$.   The main engine behind the inductive step is a concatenation lemma, which states that the mirror walk up to time $t_1+t_2$ can be coupled to be close to the concatenation of two independent mirror walks of lengths $t_1$ and $t_2$ (see Proposition~\ref{prop:conc}).  For this concatenation step to work we need to quantify the probability that a walk interacts with its previous history.  This motivates the notion of a \textit{relaxed time} (see Definition~\ref{eq:def_relax} below).
To set up the concatenation argument we therefore need to show that, indeed, the walk is unlikely to interact with its history from a relaxed time (Proposition~\ref{prp:dont-hit}), and that here are many relaxed times (Proposition~\ref{prp:relaxed}). In Section 5 we prove that these hold, assuming the induction hypotheses.  This sets up the concatenation argument, which then allows the induction argument to be completed. Throughout Sections~\ref{sec:heavy}, \ref{sec:conc} and \ref{sec:induct} we assume $\mathcal H(T)$ and in Section~\ref{sec:induct} we finish the induction by proving $\mathcal H(T+1)$.

The structure of the proof is depicted in Figure~\ref{fig:arg-structure}.

\section{The Kesten-Papanicolaou argument}\label{sec:KP}
In this section we introduce an argument that will be reused in the
induction scheme and that can be implemented to give a short proof of a coupling between the driven walk and the mirror walk up to times   $p^{-5/4}$ which is already diffusive\footnote{A more careful argument would yield a coupling to the driven walk up to times $\ll p^{-2}$.}.

\begin{figure}
\centering
\begin{tikzpicture}
\draw [dotted] (-3,-3) -- (3,-3);
\draw [dotted] (-3,-2) -- (3,-2);
\draw [dotted] (-3,-1) -- (3,-1);
\draw [dotted] (-3,0) -- (3,0);
\draw [dotted] (-3,1) -- (3,1);

\draw [dotted] (-3,-3) -- (-3,1);
\draw [dotted] (-2,-3) -- (-2,1);
\draw [dotted] (-1,-3) -- (-1,1);
\draw [dotted] (0,-3) -- (0,1);
\draw [dotted] (1,-3) -- (1,1);
\draw [dotted] (2,-3) -- (2,1);
\draw [dotted] (3,-3) -- (3,1);

\draw [semithick, ->] (-3,-3) -- (0,-3) -- (0,0) -- (2,0) -- (2,-2) -- (-1,-2) -- (-1, 1) -- (3,1);

\filldraw [black] (0,1) circle (0.05);
\filldraw [black] (2,1) circle (0.05);
\filldraw [black] (-1,0) circle (0.05);
\filldraw [black] (0,-2) circle (0.05);
\end{tikzpicture}
\caption{An illustration of the set $\Ss$ defined in~\eqref{eq:S}.  At the times marked by circles, the path could turn in some direction to quickly hit a previously discovered mirror.  So long as $\mcal{T}\cap \mcal{S} = \emptyset$, the walk is guaranteed to explore fresh environments.}
\label{fig:Ss}
\end{figure}

Recall that $W$ is a regenerated mirror walk starting from $X(0)=0$ in direction $V(0)=e_1$. Our goal is to show that $W$ rarely interacts with its past, so that it can remain coupled to the driving walk $\tilde{W}$ (see Section~\ref{sec:driving}). To prove this, it is helpful to introduce the set of times $\Ss$ at which $W$ may turn in the direction of an old mirror. Define
\begin{equation}\label{eq:S}
    \Ss :=  \bigg\{ s>0 : \begin{array}{cc}
         \text{ there exist a mirror at } y\in \mathcal M(s-1) \text{ and a direction }  \\
         u \neq \pm V(s-1)\text{ such that } y+ru=X(s) \text{ for some } r\in [0,2t_*]
    \end{array} \bigg\},
\end{equation}
where we recall that $t_*=\lfloor p^{-1}\log ^3 (1/p) \rfloor $.
See Figure~\ref{fig:Ss} for an illustration.
The choice $r\in [0,2t_*]$ is natural since the walk cannot go straight for longer than $t_*$ if $s+2 t_*\le \tau _{\rm few}$.
Note that the event $\{s\in \Ss\}$ is measurable in $\mathcal F_{s-1}$.

The main combinatorial lemma which allows us to control self-interactions is the following, which we will reuse several times also in the proof of the long-time result.
\begin{lemma}[Sparsity of $\Ss$]
\label{lem:sparsity}
For any $a<b$, we have on the event $\tau _{\rm few} >a$
\[
|\Ss\cap[a,b]| \leq 2(|\mcal{M}(a)\cap B_{b-a+2t_*}(X(a))| + |\mcal{T}\cap[a,b]|)^2.
\]
\end{lemma}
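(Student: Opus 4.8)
The plan is to show that every time $s \in \Ss \cap [a,b]$ can be charged to a pair consisting of a discovered mirror and either another discovered mirror or a turning time, so that the total count is at most the square of the total number of such objects. First I would fix the event $\{\tau_{\rm few} > a\}$ and observe that on this event the walk's velocity changes at least once in every window of length $t_*$ within $[a, \tau_{\rm few})$; we will only need this to control the geometry of straight segments. The key point is that membership $s \in \Ss$ means there is a mirror $y \in \mcal{M}(s-1)$ and a direction $u \neq \pm V(s-1)$ with $X(s) = y + ru$ for some $r \in [0, 2t_*]$. Since $u \neq \pm V(s-1)$, the point $X(s)$ and the mirror $y$ differ in a coordinate \emph{transverse} to the current velocity, and the line $y + \R u$ is the ``incoming ray'' that would lead the walk into $y$. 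The mirror $y$ lies in $\mcal{M}(s-1) \subseteq \mcal{M}(b)$, and since $X(s)$ is within distance $2t_*$ of $y$ along a coordinate axis while $X(s)$ itself is reached from $X(a)$ in at most $b-a$ steps, we have $y \in B_{b-a+2t_*}(X(a))$; hence $y$ ranges over $\mcal{M}(a) \cap B_{b-a+2t_*}(X(a))$ together with mirrors discovered during $[a,b]$, i.e.\ over a set of size at most $|\mcal{M}(a) \cap B_{b-a+2t_*}(X(a))| + |\mcal{T} \cap [a,b]|$. Call this quantity $N$.

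The heart of the argument is to bound, for each fixed mirror $y$, the number of times $s \in \Ss \cap [a,b]$ that can be charged to that particular $y$, by $2N$. Given $y$, the relevant times $s$ are those for which $X(s)$ lies on one of the $2d-2$ rays emanating from $y$ in a direction transverse to $V(s-1)$, at distance $\le 2t_*$. Along a fixed straight segment of the walk's trajectory (a maximal run with constant velocity), the walk can cross a given line $y + \R u$ (with $u$ not parallel to that segment's velocity) at most once; and if $u$ \emph{is} parallel, the walk would have to be travelling along $y + \R u$ itself, but then $u = \pm V(s-1)$, which is excluded. So each straight segment contributes at most one time $s$ charged to $y$ for each of the finitely many transverse directions --- but more usefully, I would argue that each such contribution forces the walk, shortly before or at time $s$, to either pass through another mirror location in $\mcal{M}(b)$ or to make a turn, and these turns/mirrors in the window are counted by $N$. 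Concretely: between consecutive times $s < s'$ charged to the same $y$, the walk must have turned (changed velocity), and each turn is either at a discovered-mirror location (in $\mcal{M}(b)$) or at a self-intersection point that was nonetheless a turn, hence accompanied by an element of $\mcal{T}\cap[a,b]$ via the coupling rules --- so the number of charged times for a fixed $y$ is at most $1 + (\text{number of turns in } [a,b])$, and the turns are in turn controlled by $|\mcal{M}(a)\cap B_{b-a+2t_*}(X(a))| + |\mcal{T}\cap[a,b]| = N$ up to a factor $2$.

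Combining the two bounds --- at most $N$ choices of mirror $y$, and at most $2N$ charged times per $y$ --- gives $|\Ss \cap [a,b]| \le 2N^2 = 2\big(|\mcal{M}(a)\cap B_{b-a+2t_*}(X(a))| + |\mcal{T}\cap[a,b]|\big)^2$, which is exactly the claimed inequality.

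The main obstacle I expect is the precise bookkeeping in the second step: controlling how many times a single mirror $y$ can be ``aimed at'' requires a careful analysis of the walk's straight segments and of exactly which events (turns, revisits, fresh-mirror discoveries) must occur between two such aiming times, and making sure these are genuinely counted by $\mcal{M}(a)$, the ball, and $\mcal{T}\cap[a,b]$ without double-counting or missing a case (for instance, a turn that happens at a lattice point previously visited \emph{without} a mirror, which is allowed by the coupling). The factor $2$ and the square structure strongly suggest that the intended argument is this ``pair of (mirror, turn-or-mirror)'' charging scheme, and the bulk of the work is verifying that every $s \in \Ss$ is uniquely (or boundedly) determined by such a pair within the relevant spatial ball and time window.
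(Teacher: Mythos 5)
Your high‐level plan (a quadratic charging bound in the quantity $N := |\mcal{M}(a)\cap B_{b-a+2t_*}(X(a))| + |\mcal{T}\cap[a,b]|$) is the right shape, but the crucial second step — bounding the number of $s$ charged to a fixed target mirror $y$ by $2N$ — is not justified and, as phrased, is not true. You reduce it to the claim that the number of direction changes of the walk in $[a,b]$ is at most $2N$. But a single mirror location can be visited, and cause a turn, many times: within one regenerated segment the walk may revisit a mirror with up to $2d$ distinct incoming velocities (each yielding a fresh turn) before repeating a state. Since the set of mirror \emph{locations} visited in $[a,b]$ has size at most $N$, the number of turns is only bounded by roughly $2d\cdot N$, not $2N$. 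You flagged the bookkeeping in this step as your main concern, and indeed this is where the argument breaks: completed honestly, it would yield $|\Ss\cap[a,b]|\le C_d N^2$ with a dimension‐dependent constant, and you would still owe the reader the turn count per mirror. (For the downstream uses in the paper this weaker constant would actually suffice, but you should not assert $2N$ without an argument.)

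The paper's proof charges each $s\in\Ss\cap[a,b]$ to a \emph{pair} of elements of the set
\[
A = \{X(s') : s'\in\mcal{T}\cap[a,b]\}\cup\big(\mcal{M}(a)\cap B_{b-a+2t_*}(X(a))\big),\qquad |A|\le N,
\]
rather than to a single mirror. Specifically, set $f(s)=(y_1(s),y_2(s))$ where $y_2(s)$ is the target mirror in the definition of $\Ss$ (your $y$) and $y_1(s)$ is the last mirror visited by the walk strictly before $s$; the role of $\tau_{\rm few}>a$ (together with $0\in\mcal{M}(a)$) is precisely to guarantee that $y_1(s)$ exists and lies in $A$, because the walk must have passed through a mirror within the preceding $t_*$ steps and hence within the ball $B_{b-a+2t_*}(X(a))$. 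The key geometric observation is that $X(s)$ lies on an axis‐aligned line through $y_1(s)$ (the walk travels straight from $y_1(s)$ to $X(s)$) and on an axis‐aligned line through $y_2(s)$, i.e.\ $X(s)\in\star(y_1(s))\cap\star(y_2(s))$ where $\star(y)$ is the union of the $d$ coordinate lines through $y$; and for any $y\ne y'$ one has $|\star(y)\cap\star(y')|\le 2$. This makes $f$ at most $2$-to-$1$ and immediately gives $|\Ss\cap[a,b]|\le 2|A|^2\le 2N^2$, with no counting of turns at all. If you want to repair your proposal, replace the turn count by this ``last mirror before $s$'' object, which automates the per-$y$ bound and produces the sharp constant $2$.
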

\begin{proof}
We say that a mirror at  $y\in \mathcal M(b)$ is ``relevant'' if it can cause $s \in \Ss \cap [a,b]$. By definition of $\Ss$, this means
that either $y$ belongs to $\mathcal M(a)$ and is at distance $2t_*$ of the trajectory of $X$ on $[a,b]$ (that is, $y\in B_{b-a+2t_*}(X(a))$), or the mirror at $y$ is discovered in the time interval $[a,b]$ (that is, $y=X(s')$ for some $s'\in \mcal{T} \cap[a,b]$). We thus define
\[
A = \{X(s) \,:\, s\in\mcal{T}\cap[a,b]\} \cup (\mcal{M}(a)\cap B_{b-a+2t_*}(X(a)))
\]
Note that $|A|\leq |\mcal{M}(a)\cap B_{b-a+2t_*}(X(a))| + |\mcal{T}\cap[a,b]|$.
Consider the function $f:\Ss\cap[a,b] \to A\times A$ defined by
\[
f(s) = (y_1(s),y_2(s)) ,
\]
where $y_1(s)$ is the last mirror visited\footnote{The term ``mirror'' is slightly abusive since $A$ is only a super set of the mirror locations -- it is only meant here as a potential mirror location.} by the walk before time $s$ (because $\tau _{\rm few} >a$, $y_1(s) \in A$ -- recall that $0\in \mcal{M}(a)$) and $y_2(s)$ is the mirror such that $y_2(s)+ru=X(s)$ in the definition of $\Ss$ (likewise, $y_2(s)\in B_{b-a+2t_*}(X(a))$). This function is thus well-defined.

Given a point $y\in \Z^d$ define the set $\star(y)$ to be
\[
\star(y) := \{y+r v \,:\, v\in \{\pm e_j\}_{j=1}^d, r\in\N \}.
\]
Then by the definition of $y_1(s)$ and $y_2(s)$ it holds that
$X(s) \in \star(y_1(s)) \cap \star(y_2(s))$.  Since
for any $y,y'\in\Z^d$ we have $|\star(y)\cap\star(y')| \leq 2$, the map
$f$ is $2$-injective, and $|\Ss\cap[a,b]|\leq 2|A|^2$, as claimed.
\end{proof}

Using this combinatorial lemma we establish the coupling between the regenerated walk and the driving walk.

\begin{proposition}\label{prop:coup-for-variance}
    Let $W$ be the regenerated mirror walk driven by $\tilde{W}$, with both walks starting from $X(0)=\tilde{X}(0)=0$ in direction $V(0)=\tilde{V}(0)=e_1$. For all $T\ge 1$ we have
    \begin{equation}
    \label{eq:KP-coupling}
        \P \big( \forall t\le T, \ X(t)=\tilde{X}(t) \big) \ge 1-p^3 T^2 \log^6(1/p).
    \end{equation}
\end{proposition}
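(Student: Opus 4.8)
The plan is to bound the probability that the coupling between $W$ and $\tilde W$ breaks by the probability that $W$ interacts with its own past. By Remark~\ref{rem:trivial-time}, as long as $t<\tau_{\rm int}$ we have $X(t)=\tilde X(t)$ (and $V(t)=\tilde V(t)$), so it suffices to show $\P(\tau_{\rm int}\le T)\le p^3 T^2\log^6(1/p)$. An interaction with the past at time $t$ means there is $s\in[0,t)$ with $X(s)=X(t)$ and either $s\in\{0\}\cup\mathcal T$ or $t\in\mathcal T$ — i.e.\ the walk either re-enters a site where a mirror was previously discovered (case (a) of Figure~\ref{fig:twocases}), or tries to place a fresh mirror at time $t\in\mathcal T$ on a site it already ruled out. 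The key point is that, up to time $\tau_{\rm int}$, the walk $W$ coincides with the \emph{non-backtracking random walk} $\tilde X$, whose geometry is easy to control; in particular there is no self-interaction until the walk revisits a site, and such revisits happen only when the current velocity points the walk back toward a previously discovered mirror, i.e.\ at times in the sparse set $\Ss$.

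Concretely, I would first set up the good event $E$ on which (i) $\tau_{\rm few}\wedge\tau_{\rm many}>T$ and (ii) $\mathcal T\cap\Ss\cap[0,T]=\emptyset$, and argue that on $E$ the walk does not interact with its past before time $T$. Indeed, if $t\le T$ is the first interaction time, then at time $t-1$ the walk sits somewhere and at time $t$ it lands on $X(s)=X(t)$ for some earlier $s$ with a mirror (or an earlier-ruled-out site, case (b)). In case (a) the walk must have turned toward the old mirror at $X(s)$ at some earlier moment $s'\le t$; since on $E$ the walk goes straight for stretches of length at most $t_*$ (because $\tau_{\rm few}>T$), the turn must have happened within the last $2t_*$ steps, so that moment $s'$ belongs to $\Ss$ by definition~\eqref{eq:S}, and since a turn requires $s'\in\mathcal T$, this contradicts $\mathcal T\cap\Ss=\emptyset$. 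Case (b), placing a fresh mirror on a previously-traversed site, likewise forces $t\in\mathcal T$ together with the fact that $X(t)$ was ruled out earlier because the walk passed straight through it, which — since the walk went straight through — puts the later approach to $X(t)$ on one of the rays $\star(y)$ from a genuine mirror $y\in\mathcal M$ reached within $2t_*$ steps, so again $t\in\Ss$, contradiction. Thus $\tau_{\rm int}>T$ on $E$, hence $X(t)=\tilde X(t)$ for all $t\le T$ on $E$.

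It then remains to bound $\P(E^c)$. By Lemma~\ref{lem:fewhea}, $\P(\tau_{\rm few}\wedge\tau_{\rm many}\le T)\le e^{-c\log^3(1/p)}$, which is negligible compared to $p^3T^2\log^6(1/p)$ (using $T\ge p^{-1}$, say; for small $T$ the bound is trivial). For the main term, I condition on $\mathcal F$ generated by $\{\tilde X(t)\}_{t\le T}$ together with the matchings at $\mathcal T\cap[0,T]\setminus\Ss$ — actually more carefully, process times one at a time: the event $\{s\in\Ss\}$ is $\mathcal F_{s-1}$-measurable, and given $\mathcal F_{s-1}$ the event $\{s\in\mathcal T\}$ has probability $p$ independently; so $\P(\mathcal T\cap\Ss\cap[0,T]\neq\emptyset)\le p\,\E\big[|\Ss\cap[0,T]|\big]$. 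Now I invoke the sparsity Lemma~\ref{lem:sparsity} with $a=0$, $b=T$ on the event $\tau_{\rm few}>0$: $|\Ss\cap[0,T]|\le 2(|\mathcal M(0)\cap B_{T+2t_*}(0)|+|\mathcal T\cap[0,T]|)^2$. Since $\mathcal M(0)=\{0\}$ this is $2(1+|\mathcal T\cap[0,T]|)^2$, and $\E|\mathcal T\cap[0,T]|\le pT$, so $\E|\Ss\cap[0,T]|\lesssim (pT)^2$ once $pT\ge1$ (the binomial second moment is $pT+(pT)^2$, dominated by $(pT)^2$ in this range; in the other range the claimed bound is again vacuous since the right side exceeds $1$). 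Multiplying by $p$ gives $p^3T^2$ up to a constant, and tracking the $\log$ factors carefully — they enter through $t_*$ only inside the ball radius $B_{T+2t_*}$, which does not matter since $\mathcal M(0)$ is a single point — one gets the stated bound $p^3T^2\log^6(1/p)$ with room to spare.

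The main obstacle I anticipate is the careful case analysis establishing that, on the event $E$, \emph{every} self-interaction (in either sense of~\eqref{tau-int}) forces a time in $\mathcal T\cap\Ss$; the subtlety is handling case (b) of Figure~\ref{fig:twocases} — placing a mirror where one was previously excluded — which is not literally "turning toward an old mirror," and requires arguing that the point being revisited lies on a ray emanating from a genuine discovered mirror within distance $2t_*$, using that the walk traversed that point in a straight segment of length $\le t_*$. The bookkeeping of which earlier moment plays the role of $s'$ in the definition of $\Ss$, and checking it lies in $[0,2t_*]$ of the relevant landing time, is where the argument needs to be written out with care.
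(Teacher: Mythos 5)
Your proposal is correct and follows essentially the same route as the paper: reduce to showing $\tau_{\mathrm{int}}>T$ on the good event $\{\mathcal T\cap\mcal S\cap[0,T]=\emptyset\}\cap\{\tau_{\mathrm{few}}>T\}$ via the same two-case analysis, then bound $\P(\mathcal T\cap\mcal S\cap[0,T]\neq\emptyset)\le p\,\E|\mcal S\cap[0,T]|$ and invoke Lemma~\ref{lem:sparsity}. The only (cosmetic) difference is in the final estimate, where the paper passes through the high-probability bound on $|\mcal S|$ furnished by $\tau_{\mathrm{many}}$ (whence the $\log^6(1/p)$), whereas you bound $\E[(1+|\mathcal T\cap[0,T]|)^2]$ directly by the binomial second moment; both the paper and your sketch gloss over the regime $pT\lesssim 1$ (where, contrary to your remark, the stated bound is \emph{not} vacuous), which requires the extra observation that $\mcal S\cap[0,T]=\emptyset$ unless $|\mathcal T\cap[0,T]|\ge 2$.
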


\begin{proof}
First, we claim that on the event $\Omega :=\{\mcal{T}\cap\mcal{S}\cap[0,T] =\emptyset \} \cap \{\tau _{\rm few} >T\}$, we have that $X(t)=\tilde{X}(t)$ for all $t\le T$.
To this end, it suffices to show that we have $\tau _{\rm int}\ge T$ on this event, where $\tau _{\rm int}$ is defined in Remark~\ref{rem:trivial-time}. This will also show that $X$ does not regenerate up to time $T$ and therefore the discussion in Remark~\ref{rem:trivial-time} about non-regenerated mirror walks holds in this case. By the definition of $\tau _{\rm int}$ we have either that $X(\tau _{\rm int})\in \mathcal M(\tau _{\rm int}-1)$ or that $X(\tau _{\rm int}) =X(s)$ for some $s\in [0,t)$ and $\tau _{\rm int}\in \mathcal T$. Let us show that if $\tau _{\rm int}\le T$ then none of these cases can occur on the event $\Omega $, which thus implies that $\tau_ {\rm int} >T$. Suppose first that $\tau _{\rm int}\in \mathcal T$ and that $X(\tau _{\rm int}) \notin \mathcal M(\tau _{\rm int}-1)$ (that is, we hit a straight line at $\tau _{\rm int}$). We have that $X(\tau _{\rm int})=X(s)$ for some $s\le\tau _{\rm int}$ and by our assumption $V(s)\neq \pm V(\tau _{\rm int}-1)$ since the walk can only close at the origin, and $X(\tau_{\rm int}) \notin \mathcal M(\tau_{\rm int}-1)$ implies
$X(\tau_{\rm int})\ne 0$. Since $\tau _{\rm few}> T\ge s$, there exists $r\in [0,t_*]$ such that $X(s)-rV(s)\in \mathcal M(\tau _{\rm int})$ and therefore $\tau _{\rm int} \in \mathcal S \cap \mathcal T \cap [0,T]$, which is impossible on $\Omega $. Next, suppose that $X(\tau _{\rm int}) \in \mathcal M(\tau _{\rm int}-1)$ (that is, we hit a previously discovered mirror). Let $s\in [0,\tau _{\rm int})$ be the last turn of the walk before $\tau _{\rm int}$ and observe that $\tau _{\rm int}-s\le t_*$ since $\tau _{\rm few}> T$. Moreover, $s$ must be the first visit to the mirror in $X(s)$ since otherwise $X(s)\in \mathcal M(s-1)$, contradicting the definition of the stopping time $\tau _{\rm int }$. It follows that $s\in \mathcal T$ and therefore $s\in \mathcal S \cap \mathcal T \cap [0,T]$ ($s\in \mathcal S$ using that $y=X(\tau_{\rm int})$), which is impossible on $\Omega$.

We now estimate the probability of $\Omega$.
Set $\Ss\cap[0,T]:=\{s_1,\dots,s_K\}$ with $s_1< s_2< \dots <s_K$ and $K=|\Ss\cap[0,T]|\le T+1$.
We then have
\[
\Prob(\mcal{T}\cap \mcal{S}\cap[0,T] \not=\emptyset ) \,\leq\, \sum_{i=1}^{T+1} \Prob\big(K\ge i,\  s_i \in \mcal{T})
\,= \, \sum_{i=1}^{T+1}\sum_{s\le T} \Prob\big(  K\ge i,\  s_i=s, \ s \in \mcal{T}\big).
\]
Since $\{K\ge i,\ s_i=s\}$ is measurable wrt $\mathcal F_{s-1}$, it is independent of $\{s \in \mcal{T}\}$ and so,
\[\Prob\big( K\ge i,\  s_i=s, \  s \in \mcal{T}\big)
= p \cdot \Prob\big( K\ge i,\
 s_i=s\big),\]
so that
\[
\Prob(\mcal{T}\cap \mcal{S}\cap[0,T] \not=\emptyset ) \,\leq\, p\sum_{i=1}^{T+1}\sum_{s\le T}  \Prob\big( K\ge i,\ s_i=s\big)
\,= \, p\sum_{i=1}^{T+1} \Prob\big(K\ge i) .
\]
We then estimate the size of $K$ using sparsity.
By Lemma~\ref{lem:sparsity}, which for $[a,b]=[0,T]$ takes the simpler form of $|\Ss\cap[0,T]| \leq 2 (1+|\mcal{T}\cap[0,T]|)^2$ (recall that $\tau _{\rm few}>0$ and $\mcal{M}(0)=\{0\}$),
and by Lemma~\ref{lem:fewhea} (for $\tau_{\rm many}$), $K\le \frac14 p^2 T^2 \log^6 (1/p)$ with very high probability, so that
\[
\Prob(\mcal{T}\cap \mcal{S}\cap[0,T] \not=\emptyset ) \,\le\, \tfrac12 p^3 T^2 \log^6 (1/p).
\]
In combination with Lemma~\ref{lem:fewhea} again (this time for $\tau_{\rm few}$), this implies $\Prob(\Omega^c)\le p^3 T^2 \log^6 (1/p)$, and concludes the proof.
\end{proof}

Next, we estimate the variance of the driving walk $\tilde{W}$, starting from a uniform random direction. Recall that $\tilde{W}$ is simply a non-backtracking random walk that continues straight with probability $1-p$ and turns with probability $p$ to a uniform direction not including where it came from.

\begin{lemma}\label{lem:driving}
    Let $\tilde{W}$ be a driving mirror walk starting from $\tilde{X}(0)=0$ with $\tilde{V}(0)$ being a uniform random direction in $\{\pm e_j\}_{j=1}^d$. Then, for any $t\ge 0$ we have that $\mathbb E\tilde{X}(t)=0$ and $\cov (\tilde{X}(t))=\tilde{\sigma} _t^2 \Id$ with
    \begin{equation*}
        \tilde{\sigma} _t^2:= \frac{1+\rho}{d(1-\rho)} t +\frac{2\rho (1-\rho ^t)}{d(1-\rho )^2} \quad \text{where} \quad \rho:=1-p\frac{2d-2}{2d-1},
    \end{equation*}
which yields
\[
\Big|\frac pt \tilde{\sigma} _t^2-\frac{2d-1}{d(d-1)}\Big| \le C\big(p+\frac{1}{pt}\big).
\]
\end{lemma}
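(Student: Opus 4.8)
The plan is to compute the covariance of $\tilde X(t)$ directly by exploiting the Markov structure of the velocity process $\tilde V(t)$, which is itself a Markov chain on $\{\pm e_j\}_{j=1}^d$. First I would record the one-step transition of $\tilde V$: given $\tilde V(t-1)$, with probability $1-p$ we have $\tilde V(t)=\tilde V(t-1)$, and with probability $p$ the new velocity is uniform among the $2d-1$ directions other than $-\tilde V(t-1)$. From this one reads off that $\tilde V(t)$, started from the uniform distribution on $\{\pm e_j\}$, stays uniform for all $t$ (the uniform distribution is stationary), so $\E[\tilde V(t)]=0$ and hence $\E[\tilde X(t)]=0$, and by symmetry $\cov(\tilde X(t))$ is a scalar multiple of $\Id$. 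The key quantity is the velocity autocorrelation $\E[\tilde V(s)\cdot \tilde V(s')]$ for $s\le s'$. Conditioning on $\tilde V(s)$, the chain either does not turn (contributing $\tilde V(s)$) or turns to a direction that is uniform among all but $-\tilde V(s)$; the expected post-turn velocity, projected onto $\tilde V(s)$, is $-\tfrac{1}{2d-1}\tilde V(s)$. Thus $\E[\tilde V(s')\mid \tilde V(s)] = \rho^{\,s'-s}\,\tilde V(s)$ with $\rho = 1-p\tfrac{2d-2}{2d-1}$, because each step multiplies the component along the original direction by $(1-p)+p\cdot(-\tfrac{1}{2d-1}) = \rho$. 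Taking expectations and using $\E|\tilde V(s)|^2 = 1$, we get $\E[\tilde V(s)\cdot\tilde V(s')] = \rho^{|s'-s|}$.

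Next I would expand $\|\tilde X(t)\|_2^2 = \big\|\sum_{s=0}^{t-1}\tilde V(s)\big\|_2^2 = \sum_{s,s'=0}^{t-1}\tilde V(s)\cdot\tilde V(s')$, take expectations, and plug in the autocorrelation:
\[
\E\|\tilde X(t)\|_2^2 = \sum_{s,s'=0}^{t-1}\rho^{|s-s'|} = t + 2\sum_{k=1}^{t-1}(t-k)\rho^k.
\]
Evaluating the geometric-type sum $\sum_{k=1}^{t-1}(t-k)\rho^k$ in closed form gives $\dfrac{\rho t}{1-\rho} - \dfrac{\rho(1-\rho^t)}{(1-\rho)^2}$, whence
\[
\E\|\tilde X(t)\|_2^2 = t\Big(1+\tfrac{2\rho}{1-\rho}\Big) - \tfrac{2\rho(1-\rho^t)}{(1-\rho)^2} = \tfrac{1+\rho}{1-\rho}\,t - \tfrac{2\rho(1-\rho^t)}{(1-\rho)^2}.
\]
Since $\cov(\tilde X(t)) = \tilde\sigma_t^2\Id$ with $\tilde\sigma_t^2 = \tfrac1d\,\E\|\tilde X(t)\|_2^2$ by the rotational symmetry of the model, this yields exactly the claimed formula for $\tilde\sigma_t^2$.

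Finally, for the asymptotic bound I would use $1-\rho = p\tfrac{2d-2}{2d-1}$, so $\tfrac{1+\rho}{d(1-\rho)} = \tfrac{2-p\frac{2d-2}{2d-1}}{d\,p\frac{2d-2}{2d-1}} = \tfrac{2d-1}{d(d-1)}\cdot\tfrac1p - \tfrac1d$, giving the leading term $\tfrac pt\tilde\sigma_t^2 = \tfrac{2d-1}{d(d-1)} + O(p) + O(1/(pt))$; the correction term $\tfrac{2\rho(1-\rho^t)}{d(1-\rho)^2}$ is at most $\tfrac{2}{d(1-\rho)^2} = O(p^{-2})$, so after multiplying by $p/t$ it contributes $O(1/(pt))$. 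Combining these estimates gives $\big|\tfrac pt\tilde\sigma_t^2 - \tfrac{2d-1}{d(d-1)}\big| \le C(p + \tfrac1{pt})$. None of these steps presents a genuine obstacle; the only mildly delicate point is bookkeeping the closed form of the double sum and confirming the cancellation $1 + \tfrac{2\rho}{1-\rho} = \tfrac{1+\rho}{1-\rho}$, which is immediate.
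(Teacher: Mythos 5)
Your approach is essentially identical to the paper's: compute the velocity autocorrelation $\E[\tilde V(s)\cdot\tilde V(s')]=\rho^{|s-s'|}$ via the one-step conditional expectation and symmetry, expand $\E\|\tilde X(t)\|_2^2$ as a double sum, and evaluate it in closed form. One small sign slip in your one-step computation: the post-turn direction is uniform on $\{\pm e_j\}\setminus\{-\tilde V(s)\}$, a set of $2d-1$ directions that \emph{includes} $\tilde V(s)$, so the projection of its mean onto $\tilde V(s)$ is $+\tfrac{1}{2d-1}$, not $-\tfrac{1}{2d-1}$; indeed $(1-p)+p\cdot\bigl(-\tfrac{1}{2d-1}\bigr)=1-p\tfrac{2d}{2d-1}\neq\rho$, whereas $(1-p)+p\cdot\tfrac{1}{2d-1}=1-p\tfrac{2d-2}{2d-1}=\rho$ as you (and the paper) use thereafter. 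Also, note that your final expression for $\E\|\tilde X(t)\|_2^2$ (and the paper's own computation) carries a \emph{minus} sign in front of the $\tfrac{2\rho(1-\rho^t)}{(1-\rho)^2}$ term, whereas the lemma statement has a plus; that appears to be a typo in the statement and does not affect the $O(p+\tfrac{1}{pt})$ bound, but you should not claim your result matches the stated formula without remarking on it.
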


\begin{proof}
    We have that $\mathbb E[\tilde{V}(t)\cdot \tilde{V}(t+1)]=1-p\frac{2d-2}{2d-1}=\rho $ and therefore $\mathbb E [\tilde{V}(t+1) \mid \tilde{V}(t)]=\rho  \tilde{V}(t)$ by symmetry. Thus, by induction we have for all $s<t$
    \begin{equation}\label{eq:812}
        \mathbb E[\tilde{V}(s)\cdot \tilde{V}(t)]= \rho^{t-s} .
    \end{equation}
    Indeed, assume that \eqref{eq:812} holds. Then, since $\tilde{V}(t+1)$ is measurable wrt $(\tilde V(s),\pi(s+1),\dots,\pi(t+1))$ and $\pi$ is iid,
    \begin{equation*}
        \mathbb E[\tilde{V}(s)\cdot \tilde{V}(t+1)]= \mathbb E\big[ \tilde{V}(s) \cdot   \mathbb E [\tilde{V}(t+1) \mid \tilde{V}(t)] \big] =\rho \mathbb E[ \tilde{V}(s) \cdot \tilde{V}(t)] =\rho ^{t+1-s}.
    \end{equation*}
    Thus,
    \begin{equation*}
    \begin{split}
        \mathbb E [\|\tilde{X}(t)\|^2] &= \mathbb E \Big[ \Big( \sum _{s=0}^{t-1}\tilde{V}(s) \Big)\cdot \Big( \sum _{s=0}^{t-1}\tilde{V}(s) \Big) \Big] =t+2\sum _{s=0}^{t-1}\sum _{s'=0}^{s-1} \mathbb E [\tilde{V}(s')\cdot \tilde{V}(s)]\\
        &=t+2\sum _{s=0}^{t-1}\sum _{s'=0}^{s-1}\rho ^{s-s'}= t+2\sum _{s=0}^{t-1} \rho ^s \frac{\rho ^{-s}-1}{\rho ^{-1} -1}= t+\frac{2\rho }{1-\rho }\sum _{s=0}^{t-1} (1-\rho ^s)
        \\&=t+\frac{2\rho }{1-\rho } \Big(t- \frac{1-\rho ^t}{1-\rho }\Big).\qedhere
    \end{split}
    \end{equation*}
\end{proof}

We now consider  the regenerated walk $W^*$  starting with a uniform random velocity. By symmetry, for any $t$ we have that $\mathbb E [X^*(t)]=0$ and the covariance matrix of $X^*(t)$ is scalar, namely, $\cov(X^*(t))=\sigma _t^2 \Id$ for some standard deviation $\sigma _t>0$. We conclude this section by estimating $\sigma _t$ for short times $t$.

\begin{corollary}\label{cor:coup-for-variance}
For all $t\ge 1$,
    \begin{equation}
    \label{eq:variance-bd}
        \frac p t|\sigma _t ^2-\tilde{\sigma }_t^2| \le 2 p^4 t^3\log ^6(1/p),
    \end{equation}
and in particular, for $1\le t\le p^{-5/4}$,
\[
\frac p t|\sigma _t ^2-\tilde{\sigma }_t^2| \le p^{1/5}.
\]
\end{corollary}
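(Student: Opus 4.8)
The plan is to obtain both estimates directly from the coupling bound of Proposition~\ref{prop:coup-for-variance} together with the explicit formula for $\tilde\sigma_t^2$ in Lemma~\ref{lem:driving}, using nothing beyond the trivial deterministic bound $\|X(s)\|_2\le s$ that holds for any nearest-neighbour lattice path (each step being a unit vector of $\ell^2$-norm one). In particular, no part of $\mathcal H(T)$ is needed.

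The first step is to promote Proposition~\ref{prop:coup-for-variance}, which is stated for the fixed initial direction $e_1$, to the case of a uniformly random initial direction. Conditionally on $\tilde V^*(0)=v_0$, the coupled pair $(W^*,\tilde W^*)$ started at the origin is, after the lattice rotation carrying $v_0$ to $e_1$, distributed exactly as the pair in Proposition~\ref{prop:coup-for-variance}; hence, writing $\Omega_t:=\{\,X^*(s)=\tilde X^*(s)\ \text{for all }s\le t\,\}$, we get $\P(\Omega_t^c\mid \tilde V^*(0)=v_0)\le p^3t^2\log^6(1/p)$ for each of the $2d$ directions $v_0$, and averaging over $v_0$ yields $\P(\Omega_t^c)\le p^3t^2\log^6(1/p)$.

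The second step is the variance comparison. On $\Omega_t$ one has $\|X^*(t)\|_2^2=\|\tilde X^*(t)\|_2^2$, and since $\cov(X^*(t))=\sigma_t^2\Id$ and $\cov(\tilde X^*(t))=\tilde\sigma_t^2\Id$ with both means zero, we have $\E\|X^*(t)\|_2^2=d\sigma_t^2$ and $\E\|\tilde X^*(t)\|_2^2=d\tilde\sigma_t^2$. Therefore
\[
d\,|\sigma_t^2-\tilde\sigma_t^2|
=\big|\E\big[(\|X^*(t)\|_2^2-\|\tilde X^*(t)\|_2^2)\mathbf 1_{\Omega_t^c}\big]\big|
\le 2t^2\,\P(\Omega_t^c)\le 2p^3t^4\log^6(1/p),
\]
where the middle inequality uses $\|X^*(t)\|_2,\|\tilde X^*(t)\|_2\le t$. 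Multiplying through by $p/(dt)$ and bounding $2/d\le 2$ gives $\frac pt|\sigma_t^2-\tilde\sigma_t^2|\le 2p^4t^3\log^6(1/p)$, which is \eqref{eq:variance-bd}. For $1\le t\le p^{-5/4}$ one has $t^3\le p^{-15/4}$, so the right-hand side of \eqref{eq:variance-bd} is at most $2p^{1/4}\log^6(1/p)$, which is $\le p^{1/5}$ once $p$ is small, since $p^{1/20}\log^6(1/p)\to 0$.

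There is essentially no serious obstacle here: the only points calling for a line of care are the rotation argument reducing to Proposition~\ref{prop:coup-for-variance} and the (almost trivial but crucial) observation that the deterministic bound $\|X^*(t)\|_2\le t$ is already strong enough to absorb the $O(p^3t^2\log^6(1/p))$ coupling-failure probability on the time scales $t\le p^{-5/4}$ of interest, so that no second-moment control of the walk on $\Omega_t^c$ is needed.
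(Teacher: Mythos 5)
Your proof is correct and follows essentially the same route as the paper: apply Proposition~\ref{prop:coup-for-variance} to a shared uniformly-random initial direction, use the deterministic bound $\|X(t)\|_2\le t$ to control the variance discrepancy on the coupling-failure event, and finish with the arithmetic for $t\le p^{-5/4}$. The only cosmetic differences are that you compare the full $\ell^2$-norms (gaining and then discarding a factor of $d$) where the paper compares first coordinates, and that you spell out the rotation argument that the paper treats as obvious.
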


\begin{proof}
Let $W^*=(X^*,V^*)$ and $\tilde{W}=(\tilde{X},\tilde{V})$ be the regenerated mirror walk and the driving walk, both starting from $0$ and the same uniform random direction. Proposition~\ref{prop:coup-for-variance} is stated for these walks starting from the $e_1$ direction, however, we can clearly apply the proposition starting from the same uniform random direction. The proposition then gives a coupling of $W^*$ and $\tilde{W}$ such that $\mathbb P (\exists s\le t, \ X^*(s)\neq \tilde{X}(s)) \le p^3t^2 \log ^6(1/p)$.
Letting $X^*(t)_1$ and $\tilde{X}(t)_1$ denote the first coordinate of $X^*(t)$ and $\tilde{X}(t)$ we obtain
\begin{equation*}
|\sigma _t^2 -\tilde{\sigma _t}^2| =\big| \E [X^*(t)_1^2-\tilde{X}(t)_1^2] \big|\le 2t^2\mathbb P (X^*(t)\neq \tilde{X}(t))\le   2p^3 t^4\log^6(1/p).
\end{equation*}
\end{proof}

\section{Self-interactions, heavy blocks, and relaxed times}\label{sec:heavy}

The fundamental idea, even present in the Kesten-Papanicolaou argument
above, is that the random walk behaves like a Markov walk so long as it does not interact with its past.   To make this idea precise we introduce the notion of hitting time $\tau_{\rm hit}(t)$, which is the first
time that the walk after time $t$ interacts with the walk before time $t$:
\begin{equation}\label{eq:def_tau_hit}
    \tau _{\rm hit}(t):= \bigg\{ t'>t : \begin{array}{cc}
         \text{ there exists } s\in[\alpha(t'),t] \text{ such that }X(t') = X(s)  \\
\text{ and either } t'\in \mathcal T \text{ or } s\in \mcal{T} \cup \{\alpha (t')\}
    \end{array} \bigg\},
\end{equation}
That is, $\tau_{\rm hit}(t)$ is the first time at which either the walk recollides at a previously visited mirror, or at which the walk revisits a straight segment of its past trajectory and is prevented from placing a new mirror, see Figure~\ref{fig:twocases}.

\medskip

The point of $\tau_{\rm hit}(t)$ is that it is the time up to which $W$ can be coupled with an independent mirror walk
starting at $X(t)$ in direction $V(t)$:

\begin{claim}\label{claim:cou}
Let $\{W(s)\}_{s\ge 0}$ be the regenerated mirror walk driven by $\{\tilde{W}(s)\}_{s\ge 0}$. Let $t\ge 0$ and let $W'=(X',V')$ be the regenerated walk starting at $X'(0)=X(t)$ and $V'(0)=V(t)$ and driven by $\{\tilde{W}(t+s)\}_{s\ge 0}$. Then, we have that $X(t+s)=X'(s)$ and $V(t+s)=V'(s)$ for all
$s\leq \tau_{\rm hit}(t)-t$.
\end{claim}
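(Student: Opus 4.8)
The plan is to prove the claim by induction on $s$, showing that as long as $s\le\tau_{\rm hit}(t)-t$, the two walks $W$ and $W'$ agree in position and velocity, essentially because up to that time the driven walk $W$ never sees a self-interaction that involves mirrors, so its update rule (1)--(4) coincides with the update rule of $W'$ applied to a ``fresh'' history. First I would set up notation: write $W'=(X',V',m')$ for the regenerated walk started at $X'(0)=X(t)$, $V'(0)=V(t)$, driven by the shifted driving process $\tilde W(t+\cdot)$ (with the set $\mathcal T$ shifted by $t$ accordingly). The base case $s=0$ is immediate by construction. For the inductive step, assume $X(t+r)=X'(r)$ and $V(t+r)=V'(r)$ for all $r\le s-1$, with $s\le\tau_{\rm hit}(t)-t$, and show the same holds at $r=s$. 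The position update $X(t+s)=X(t+s-1)+V(t+s-1)=X'(s-1)+V'(s-1)=X'(s)$ is automatic from the inductive hypothesis, so the content is in matching the velocity, i.e.\ in matching the matchings $m(t+s)$ and $m'(s)$.

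The key step is a case analysis of which of the four rules defines $m(t+s)$ for $W$ versus which defines $m'(s)$ for $W'$; these must turn out to be the same rule applied to the same input. Since both walks are driven by the same (shifted) driving process, they see the same $\tilde V(t+s-1)$ and $\tilde m(t+s)$, and by the inductive hypothesis $V(t+s-1)=V'(s-1)$; so rules (2), (3), (4) produce identical output provided both walks \emph{are} in the ``no self-intersection'' branch, i.e.\ provided $X(t+s)\ne X(u)$ for all $u\in[0,t+s)$ for $W$, and $X'(s)\ne X'(r)$ for all $r\in[0,s)$ for $W'$. The point of the hypothesis $s\le\tau_{\rm hit}(t)-t$ is exactly to control this. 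By the inductive hypothesis the two walks have traced the same trajectory on $[0,s-1]$ (shifted), so $X'(s)=X'(r)$ for some $r<s$ is equivalent to $X(t+s)=X(t+r)$ for some $r\in[0,s)$; in that case rule (1) applies to $W'$, and I must check it also applies to $W$ and gives the same matching. Conversely, if $X(t+s)=X(u)$ for some $u\in[\alpha(t+s),t]$, i.e.\ the new walk hits the portion of $W$'s history lying before time $t$: here I invoke the definition of $\tau_{\rm hit}(t)$. If such a collision occurs at time $t+s<\tau_{\rm hit}(t)$, then by \eqref{eq:def_tau_hit} neither $t+s\in\mathcal T$ nor $u\in\mathcal T\cup\{\alpha(t+s)\}$, which means $X(t+s)$ lies on a straight segment of the old trajectory where no mirror was placed and where $W$ cannot place a mirror now — so for $W$ the walk proceeds straight, $m(t+s)=\mathrm{id}$ effectively (or more precisely rule (1) copies $m(u)=\mathrm{id}$ on the relevant directions), and correspondingly $W'$, not seeing this as a self-intersection with its own (shifted) history and having $t+s\notin\mathcal T$, also proceeds straight by rule (2). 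One also has to handle the mixed case where $X(t+s)=X(u)$ for some $u\in[\alpha(t+s),t]$ \emph{and} $X(t+s)=X(t+r)$ for some $r\in[0,s)$, but then $r$ reduces to the earlier-visited-mirror case and is consistent. So in every subcase the two walks execute the same rule and stay coupled at time $s$.

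I expect the main obstacle to be the bookkeeping in this case analysis — in particular, making airtight the claim that when $W'$ invokes rule (1) because it revisits its own trajectory at some $r<s$, the walk $W$ invokes rule (1) for the ``matching'' earlier time and copies the \emph{same} stored matching $m(t+r)=m'(r)$. This requires knowing inductively not just that positions and velocities agree but that the \emph{stored matchings agree at revisited sites}, which follows from the inductive hypothesis that the two walk histories coincide on $[0,s-1]$ but needs to be tracked carefully (or one strengthens the inductive hypothesis to include equality of the matching processes $m(t+r)=m'(r)$ for $r\le s-1$, which costs nothing and is the cleanest route). The subtle point that $\tau_{\rm hit}(t)$ only forbids collisions with the history \emph{up to time $t$} — while collisions with the portion of history in $(t,t+s)$ are genuinely shared by both walks and handled by rule (1) on both sides identically — is the conceptual heart, and I would state it explicitly before diving into the cases. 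Everything else is routine verification that rules (2)--(4) are functions of $(V(t+s-1),\tilde V(t+s-1),\tilde m(t+s))$ alone, hence give the same output for both walks.
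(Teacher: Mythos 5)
The paper states Claim~\ref{claim:cou} without a proof, so there is no reference argument to compare against; I am evaluating your proposal on its own.

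Your overall strategy (induction on $s$, positions trivially, velocities by matching which of rules (1)--(4) fires) is the right approach, and you correctly identify that the heart of the matter is the interaction of rule (1) with the two different history windows. However, the case you flag as the ``conceptual heart'' --- $W$ revisiting a point $X(u)$ with $u\in[\alpha(t+s),t)$ while $W'$ treats that point as fresh --- is resolved incorrectly, and this is a genuine gap. You assert that since $u\notin\mcal{T}$ (by $\tau_{\rm hit}(t)>t+s$), ``rule (1) copies $m(u)=\mathrm{id}$ on the relevant directions'', so $W$ goes straight, and that ``$W'$ also proceeds straight by rule (2)''. Neither assertion is automatic. First, $u\notin\mcal{T}$ forces $\tilde m(u)=\mathrm{id}$ but does \emph{not} force $m(u)=\mathrm{id}$: if rule~(4) was applied at the first visit time $u$ (which happens whenever $V(u-1)\neq\tilde V(u-1)$, i.e.\ whenever the driven walk had already decoupled from the driving walk before $u$), the stored matching $m(u)$ is the swap $V(u-1)\leftrightarrow\tilde V(u-1)$, which is not the identity. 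Second, $W'$ applies rule (2) only if $V'(s-1)=\tilde V(t+s-1)$; if $W'$ has itself decoupled from the driving walk, it applies rule (3) or (4), and rule (4) with $\tilde m(t+s)=\mathrm{id}$ makes $W'$ turn, not go straight. So you cannot conclude that both walks go straight; you must actually prove that $m(u)[V(t+s-1)]$ agrees with whatever rule (2)/(3)/(4) outputs for $W'$ at direction $V'(s-1)$.

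Your fallback suggestion --- strengthen the inductive hypothesis to include $m(t+r)=m'(r)$ for all $r\le s-1$, claiming this ``costs nothing'' --- does not work. Precisely in the case just described, $W$ at time $t+r$ applies rule~(1) and sets $m(t+r)=m(u_0)$ (the matching created back at the first visit time $u_0<t$, from the driving data $\tilde m(u_0),\tilde V(u_0-1),V(u_0-1)$), whereas $W'$ at time $r$ applies rule~(2)/(3)/(4) and creates $m'(r)$ from entirely different driving data $\tilde m(t+r),\tilde V(t+r-1),V'(r-1)$. These two matchings are generically different, so the strengthened hypothesis is false. What the inductive step actually needs is not $m(t+r)=m'(r)$ but the weaker and more delicate statement that they agree on the one direction that is queried at that step (and, by reversibility, its reverse). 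Establishing that agreement is exactly the content that is missing from your sketch, and it is where the real work of the proof lies.
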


If it held for example that $\tau_{\rm hit}(t) > T$ for all $t\le T$, then this would imply a Markov property for the walk up to time $T$. This is of course too strong of a statement to hope for, and there are essentially two obstacles that can cause $\tau_{\rm hit}(t)$ to occur shortly after $t$.

\medskip

The first obstacle is a global one: if there are \textit{heavy blocks}, meaning balls that have many more than the expected number of discovered mirrors, then it is likely that the walk will revisit one such mirror.
Recall that $\mathcal{M}(t)$ is the set of mirrors discovered up to time $t$ (since the last regeneration time).
\begin{defn}
    For $x\in \mathbb Z ^d$ and $r\ge p^{-1}$ we say that the block $B(x,r)$ is heavy at time $t$ if
    \begin{equation}\label{e.heavy}
        \big| B(x,r) \cap \mathcal{M} (t) \big|\ge  p^{1.9} r^{2}.
    \end{equation}
\end{defn}
The scaling~\eqref{e.heavy} is natural. If the walk is diffusive with time measured in units of $p^{-1}$ (as we want to establish), then the walk is expected to exit the ball $B(x,r)$ after some time $t$ such that $\sqrt{t/p} \sim r$, that is $t\sim p r^2$. In that time interval, the walk discovers of the order of $p^2 r^2$ mirrors, which is much smaller than the right-hand side $p^{1.9}r^2$ of~\eqref{e.heavy}. If the walk is indeed diffusive, heavy blocks must be unlikely.

\medskip

The second obstacle that could cause $\tau_{\rm hit}(t)$ to occur relatively
quickly after $t$ is a local one, simply that it could hold for some $s\approx p^{-1}$ that $X(t)+sV(t) \in \mathcal{M}(t)$.  This is an obstacle that is characteristic of models that have a short-time
ballistic regime.

\medskip
We now introduce the notion of a \textit{relaxed time}, which is a time for which neither the global obstacle nor the local obstacle forces the walk to quickly interact with its past. Recall that $t_*:=p^{-1}\log ^3(1/p)$.

\begin{defn}\label{eq:def_relax}
A time $t>0$ is called \emph{relaxed} if:
\begin{enumerate}
    \item
    For all $r\ge p^{-1}$ the block $B(X(t),r)$ is not heavy at time $t$.
    \item
    For all $1\le s\le t_*$ we have that $X(t)+sV(t)\notin \mathcal{M}(t)$.
\end{enumerate}
Moreover we say that $t$ is \emph{locally relaxed} if only the second item holds (whether or not there is a heavy block at time $t$).
We write  $\mcal{R}$ for the set of relaxed times.
\end{defn}
 In words, a time is relaxed if the history of the walk is not heavy around $X(t)$ and if $W$ is not about to bump into a mirror.  The key fact about relaxed times is that indeed we can show that
$\tau_{\rm hit}(t)$ is likely to be much larger than $t$ when $t$ is relaxed and when the mirror trajectory is somewhat regular.

To use this idea we need to be able to verify that there are many relaxed times.  Because there are two distinct obstacles
one must avoid, this argument splits naturally into a \textit{global} and a \textit{local} component.  To present the argument it is useful to introduce two new stopping times.   The first stopping time $\tau_{\rm hea}$ is the time at which the first heavy block is formed,
\begin{equation}
\label{eq:tau-hea-def}
\tau_{\rm hea} := \min\{t>0 \,:\, \text{there exists a heavy block at time t}\}.
\end{equation}
The other stopping time we introduce is useful for dealing with the local obstruction, and we
set
\begin{equation}\label{eq:tau-rel}
\tau_{\rm rel} := \min\{t>t_* \,:\, |\mathcal{R}\cap[t-t_*,t]| \leq 0.9t_*\}.
\end{equation}
Finally, we introduce
\begin{equation}\label{eq:deftau}
\tau := \tau_{\rm hea} \wedge \tau_{\rm rel} \wedge \tau_{\rm many} \wedge\tau_{\rm few}.
\end{equation}
The rest of this section is devoted to the proof of the next propositions.

\begin{proposition}[Escaping from a relaxed time]
\label{prp:dont-hit}
If $\Hyp(T)$ holds for some $T \le \bar T$, then on the event  $\{t \text{ is relaxed} \}$ we have
\begin{equation}
\label{eq:orca}
    \mathbb P \big( \tau _{\rm hit}(t) \ge (t+T) \wedge \tau \mid \mathcal F _t  \big) \ge 3/4,
\end{equation}
and, on the same event,
\begin{equation}
\label{eq:humpback}
    \mathbb P \big( \tau _{\rm hit}(t_+) \ge (t+T)\wedge \tau \mid \mathcal F _t  \big) \ge 3/4,
\end{equation}
where $t_+:=\min \{s>t :s\in \mathcal T \}$.
\end{proposition}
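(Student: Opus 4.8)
The plan is to run the coupling of Claim~\ref{claim:cou}: up to $\tau_{\rm hit}(t)$ the walk $W$ after time $t$ agrees with a regenerated mirror walk $W'=(X',V')$ started from $(X(t),V(t))$ and driven by $\{\tld W(t+s)\}_{s\ge 0}$, which is independent of $\mcal F_t$; since the induction hypotheses are invariant under the symmetries of $\bbZ^d$, $\Hyp(T)$ holds for $W'$. Assuming $t<\tau$ (otherwise $(t+T)\wedge\tau\le t<\tau_{\rm hit}(\cdot)$ and both statements are trivial), it then suffices to bound, conditionally on $\mcal F_t$ and on $\{t\text{ relaxed}\}$, the probability that $W'$ interacts with the frozen past $\{X(r):r\in[\alpha(t),t]\}$ before $(t+T)\wedge\tau-t$, in one of the two ways of Figure~\ref{fig:twocases}: (a) $X'(s')\in\mcal M(t)$, i.e.\ the walk returns to an old mirror (or to $X(\alpha(t))$); (b) $X'(s')=X(r)$ for some $r\le t$ carrying no mirror, with $s'$ (after the time shift) a time of $\mcal T$ -- each such revisit costing a factor $p$. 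I would estimate (a) and (b) separately, splitting according to whether the interaction occurs at a short time $s'\le t_*$ or a long time $s'>t_*$.

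\emph{Short times.} Property~(2) of a relaxed time forbids $W'$ from reaching $\mcal M(t)$ by going straight for $\le t_*$ steps, so a type-(a) interaction at a time $s'\le t_*$ forces $W'$ to first turn \emph{toward} an old mirror, i.e.\ (after the shift) to have a turn at a time in $\mcal S\cap\mcal T\cap[t,t+t_*]$. Lemma~\ref{lem:sparsity} on $[t,t+t_*]$ (applicable since $t<\tau\le\tau_{\rm few}$), together with property~(1) of the relaxed time ($|\mcal M(t)\cap B_{3t_*}(X(t))|<p^{1.9}(3t_*)^2$) and the bound $|\mcal T\cap[t,t+t_*]|\le\log^6(1/p)$ valid on $\{\tau_{\rm many}>t+t_*\}$, yields $|\mcal S\cap[t,t+t_*]|\le Cp^{-1/5}\log^{C}(1/p)$; since $\{s\in\mcal S\}\in\mcal F_{s-1}$ and $\Prob(s\in\mcal T\mid\mcal F_{s-1})=p$, exactly as in the proof of Proposition~\ref{prop:coup-for-variance} this gives $\Prob(\mcal S\cap\mcal T\cap[t,t+t_*]\neq\emptyset\mid\mcal F_t)\le Cp^{4/5}\log^{C}(1/p)=o(1)$. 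Type-(b) short-time interactions are treated by the same $\star(\cdot)$-intersection bookkeeping: the $O(\log^6(1/p))$ straight runs of $W'$ on $[t,t+t_*]$ can meet the $O(\log^6(1/p))$ pieces of the pre-$t$ trajectory lying within $O(t_*)$ of $X(t)$ in only $O(\log^{C}(1/p))$ points, each attempted mirror placement costing $p$, so their total contribution is $o(1)$.

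\emph{Long times.} By~\hyperref[h2]{(H2)} and a union bound over the $\le T$ times $s'$ (recall $T\le\bar T$, so $Te^{-2\log^2 p}\to 0$), outside an event of probability $o(1)$ the walk $W'$ stays within $R_{s'}:=\log^{8}(1/p)\sqrt{s'/p}$ of $X(t)$ at all times $s'$; thus only mirrors of $B_{R_{s'}}(X(t))$ are reachable at time $s'$, and by property~(1) there are at most $N_{s'}:=p^{1.9}R_{s'}^2=p^{0.9}\log^{16}(1/p)\,s'$ of them (and $O(N_{s'})$ pre-$t$ trajectory sites there, relevant for case (b)). The last ingredient is an \emph{anticoncentration} estimate sharpening~\hyperref[h1]{(H1)} from the ball scale $p^{-1}$ to single points, $\Prob(X'(s')=z)\le p^{2}(ps')^{-d/2}p^{-2\eps}$ for $s'\ge t_*$, which should follow from the local ballistic structure (over the $\simeq p^{-1}$ steps preceding $s'$ the walk runs essentially straight, so its law spreads along a segment; iterating recovers the extra powers of $p$). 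Granting it, $\Prob(\text{type (a) at }s'\mid\mcal F_t)\le N_{s'}\,p^{2}(ps')^{-d/2}p^{-2\eps}$, and because $d\ge 4$ and $T\le e^{\log^2(1/p)}$,
\[
\sum_{s'=t_*}^{(t+T)\wedge\tau-t}N_{s'}\,p^{2}(ps')^{-d/2}p^{-2\eps}\ \le\ C\,p^{0.9-2\eps}\log^{C}(1/p)\ =\ o(1),
\]
where we used $\sum_{s'\ge t_*}s'^{1-d/2}\le C\log^{2}(1/p)$ to absorb the remaining negative power of $p$; the companion sum for case (b) carries an extra factor $p$ and is smaller still. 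Adding the short- and long-time, type-(a) and type-(b) contributions, and the \hyperref[h2]{(H2)}-failure probability, gives $\Prob(\tau_{\rm hit}(t)<(t+T)\wedge\tau\mid\mcal F_t)\le 1/4$ on $\{t\text{ relaxed}\}$, which is~\eqref{eq:orca}. I expect the main obstacle to be exactly this pointwise anticoncentration: upgrading the $p^{-1}$-ball bound~\hyperref[h1]{(H1)} to something summable over \emph{all} times up to $e^{\log^2(1/p)}$ in the critical dimension $d=4$ is where the lattice-scale ``local'' analysis has to be woven together with the ``global'' diffusive control, and should be the technically heaviest step.

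Finally,~\eqref{eq:humpback} would follow from~\eqref{eq:orca}: $t_+-t$ is geometric of mean $p^{-1}$, so with overwhelming probability $t_+-t\le t_*$, and then $[t,t_+)\cap\mcal T=\emptyset$ together with property~(2) of the relaxed time $t$ forces $W$ to run straight on $[t,t_+]$, i.e.\ $X(t+s)=X(t)+sV(t)$ for $0\le s\le t_+-t$. Hence the only part of the history not already controlled by $\tau_{\rm hit}(t)$ is this straight run of $\le t_*$ mirror-free sites, together with (if $t\in\mcal T$) the single mirror at $X(t)$; a type-(b) revisit of the run, or a type-(a) return to $X(t)$, costs a factor $p$ per visit, and by the same $\star(\cdot)$-sparsity argument the number of such visits up to $(t+T)\wedge\tau$ is $O(\log^{C}(1/p))$ with overwhelming probability. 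Therefore $\Prob\big(\tau_{\rm hit}(t_+)<(t+T)\wedge\tau,\ \tau_{\rm hit}(t)\ge(t+T)\wedge\tau\mid\mcal F_t\big)=o(1)$, and combining with~\eqref{eq:orca} gives~\eqref{eq:humpback}.
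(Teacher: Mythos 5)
Your short-time analysis is essentially the same as the paper's: the sparsity Lemma~\ref{lem:sparsity}, the heavy-block bound from relaxedness, and the predictability of $\mcal{S}$ combine to show $\Prob(\mcal{S}\cap\mcal{T}\cap[t,t+t_*]\neq\emptyset\mid\mcal{F}_t)=o(1)$. And your derivation of \eqref{eq:humpback} from \eqref{eq:orca} is reasonable (the paper omits the details, calling it ``a small modification''). But your long-time analysis contains a genuine gap, and as you yourself flag, it is the technically heaviest and least justified step.

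The gap is the pointwise anticoncentration bound $\Prob(X'(s')=z)\le p^2(ps')^{-d/2}p^{-2\eps}$. The induction hypothesis \hyperref[h1]{(H1)} is stated only at the $p^{-1}$-ball scale, and the anticoncentration machinery in the appendix (Lemma~\ref{lem:felipe}, via Fourier truncation to $|\xi|\lesssim M^{-1}$ and Gaussian decay of $\hat\mu^n$) can only localize to the scale of the third-moment parameter $M$ -- for the mirror walk, $M\simeq p^{-1}$, so there is no mechanism in the paper to go below the $p^{-1}$-ball scale. The heuristic you offer (``the walk runs essentially straight over $\simeq p^{-1}$ steps, so its law spreads along a segment; iterating recovers the extra powers of $p$'') is exactly the sort of lattice-scale statement that, if one tried to push it through an induction, would need its own inductive hypothesis, its own concatenation step, and control of the direction distribution $V'(s'-p^{-1})$ jointly with $X'(s'-p^{-1})$. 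The paper does not establish anything of this kind and does not need to.

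What the paper does instead -- and this is the idea your proposal is missing -- is to apply the $\mcal{S}\cap\mcal{T}$ argument at \emph{all} time scales, not only for $s'\le t_*$. Discretize time into blocks $I_j=[t+t_j,\,t+t_j+2t_*]$ with $t_j=(j-1)\lfloor p^{-1}\rfloor$, and set $\mcal{A}_j=\{d(X'(t_j),\mcal{M}(t))\le 3t_*\}$, $\mcal{B}_j=\{[0,\tau]\cap I_j\cap\mcal{S}\cap\mcal{T}\neq\emptyset\}$. Step 1 of the paper's proof shows (using $\tau\le\tau_{\rm rel}$ to find a relaxed time within $t_*$ of $\tau_{\rm hit}$, and then the first turn $\sigma'$ after it) that $\{t\text{ relaxed}\}\cap\{\tau_{\rm hit}\le(t+T)\wedge\tau\}\subset\bigcup_j\mcal{A}_j\cap\mcal{B}_j$. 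Step 2 bounds $\Prob(\mcal{A}_j\mid\mcal{F}_t)\lesssim p^{-0.2}j^{-1}$ using \hyperref[h1]{(H1)} at the $p^{-1}$-ball scale, \hyperref[h2]{(H2)} for localization, and the heavy-block bound for the mirror count; and $\Prob(\mcal{B}_j\mid\mcal{F}_{t+t_j-1})\lesssim p^{2/3}$ using sparsity of $\mcal{S}$ on $I_j$ and predictability. The product $p^{2/3}\cdot p^{-0.2}j^{-1}$ then sums over $j\le pT$ to $O(p^{1/3}\log T)$. In effect, the small factor that you were hoping to extract from the unproven pointwise bound is instead supplied by the event $\mcal{B}_j$ -- the requirement to \emph{turn at the right moment} -- and the passage from the ball scale $p^{-1}$ to the lattice scale is never needed. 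You should restructure your long-time case around this $\mcal{A}_j\cap\mcal{B}_j$ decomposition rather than attempting to sharpen \hyperref[h1]{(H1)}.
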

\begin{proposition}[Heavy blocks are unlikely]
\label{prp:noheavy}
If $\Hyp(T)$ holds for some $T\le \bar T$, then
\[
\Prob(\tau_{\rm hea} \leq T \wedge \tau ) \leq e^{-c\log^3(1/p)}.
\]
\end{proposition}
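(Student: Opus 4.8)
The plan is to first trade ``heavy block'' for a statement about how often the walk, sampled every $p^{-1}$ steps, sits in a given ball. Partition time into the windows $I_k=[kp^{-1},(k+1)p^{-1})$. If a mirror of $\mathcal M(t)$ lies in $B(x,r)$ with $r\ge p^{-1}$, it was placed at some $X(s)$ with $s\in\mathcal T\cap I_k$, and then $\|X(kp^{-1})-X(s)\|<p^{-1}\le r$ forces $X(kp^{-1})\in B(x,2r)$; moreover, before $\tau_{\mathrm{many}}$ each $I_k$ meets $\mathcal T$ in fewer than $\log^3(1/p)$ points. Setting
\[
V(x,r):=\bigl|\{k:\ kp^{-1}\le T\wedge\tau,\ X(kp^{-1})\in B(x,2r)\}\bigr|,\qquad \nu(r):=\tfrac12\,p^{1.9}r^2/\log^3(1/p),
\]
on the event $\{\tau_{\mathrm{hea}}\le T\wedge\tau\}=\{\tau_{\mathrm{hea}}=\tau\le T\}$ the first heavy block $B(x,r)$ (formed at time $\tau_{\mathrm{hea}}$) obeys $p^{1.9}r^2\le|B(x,r)\cap\mathcal M(\tau_{\mathrm{hea}})|\le 1+\log^3(1/p)\,V(x,r)$, hence $V(x,r)\ge\nu(r)$ (here $p^{1.9}r^2\ge p^{-0.1}\ge2$). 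Since $|\mathcal M(\tau_{\mathrm{hea}})|\lesssim\bar Tp\log^3(1/p)$ before $\tau_{\mathrm{many}}$ (Lemma~\ref{lem:fewhea}), only dyadic radii $p^{-1}\le r\le R'\lesssim\sqrt{\bar Tp^{-0.9}\log^3(1/p)}$ and centres $x$ within distance $O(\bar T)$ of the origin can occur; rounding $x$ to the relevant $r$-net (and enlarging $2r$ to $4r$) leaves $e^{O(\log^2(1/p))}$ pairs $(x,r)$. Because $\log^3(1/p)\gg\log^2(1/p)$, it will suffice to prove $\Prob(V(x,r)\ge\nu(r))\le e^{-\log^3(1/p)}$ for each fixed such $(x,r)$.

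\noindent\textbf{Step 2: the first moment.} Covering $B(x,2r)$ by $O((rp)^d)$ balls of radius $p^{-1}$ and applying $\Hypone{kp^{-1}}$ (legitimate since $kp^{-1}\le T\le\bar T$, in the footnote form), one has $\Prob(X(kp^{-1})\in B(x,2r))\le\min(1,\,C(rp)^dk^{-d/2}p^{-2\epsilon})$; summing over $0\le k\le\lceil Tp\rceil$, the tail series converging because $d\ge4$, gives $\E[V(x,r)]\le G:=C(rp)^2p^{-\epsilon}$. As $\epsilon=0.01$ and $r\ge p^{-1}$ this yields $\nu(r)/G\ge p^{-0.09}/(2C\log^3(1/p))\ge p^{-0.08}$ for $p$ small, so the task is to show $V(x,r)$ does not exceed $p^{-0.08}$ times its mean.

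\noindent\textbf{Step 3: the tail bound (the crux).} The heart of the matter is a moment estimate $\E[V(x,r)^q]\le q!\,G^q$, valid for all $q\ge1$. Writing $V(x,r)=\sum_k\mathbf 1\{X(kp^{-1})\in B(x,2r)\}$ and peeling off the largest surviving index, this reduces to a bound $\Prob(X(kp^{-1})\in B(x,2r)\mid\mathcal F_{k'p^{-1}})\le g(k-k')$ with $\sum_{m\ge1}g(m)\le G$: when the gap $k-k'$ is $\le t_*$ the probability is $\le1$ for free (the walk, just counted in $B(x,2r)$, cannot yet have left $B(x,2r+t_*)\subseteq B(x,3r)$), and for larger gaps one couples the walk after $k'p^{-1}$ to a fresh regenerated walk via Claim~\ref{claim:cou} and bounds the latter with $\Hypone{\cdot}$. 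The delicate point is the error incurred when the coupling breaks, i.e.\ when the walk hits its own past, and controlling it is where I expect the real work to lie: one must use the abundance of relaxed times before $\tau_{\mathrm{rel}}$ (hence before $\tau$), from which Proposition~\ref{prp:dont-hit} makes $\tau_{\mathrm{hit}}$ large with conditional probability $\ge3/4$, together with the sparsity Lemma~\ref{lem:sparsity} and the self-consistent fact that before $\tau_{\mathrm{hea}}$ the discovered mirrors are never dense near the trajectory, so that these errors are \emph{summable} over the gap rather than merely bounded. This intertwining of the lattice-scale recollision analysis with the global occupation estimate, threading $\tau$ so that the bootstrapped ``no heavy block / many relaxed times'' information is available, is the main obstacle; in the paper it is presumably carried out as a joint induction of Propositions~\ref{prp:dont-hit} and~\ref{prp:noheavy}, consistent with the bidirectional arrow in Figure~\ref{fig:arg-structure}.

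\noindent\textbf{Step 4: conclude.} Granting the moment bound, I would take $q=\lceil\log^4(1/p)\rceil$ and use $q!\le q^q$ to get
\[
\Prob\bigl(V(x,r)\ge\nu(r)\bigr)\ \le\ \nu(r)^{-q}\,\E[V(x,r)^q]\ \le\ \bigl(q\,G/\nu(r)\bigr)^{q}\ \le\ e^{-q}\ \le\ e^{-\log^3(1/p)},
\]
since $q\,G/\nu(r)\le q\,p^{0.08}<e^{-1}$ for $p$ small and $q\ge\log^3(1/p)$. Summing this over the $e^{O(\log^2(1/p))}$ relevant pairs $(x,r)$ from Step~1 gives $\Prob(\tau_{\mathrm{hea}}\le T\wedge\tau)\le e^{O(\log^2(1/p))}e^{-\log^3(1/p)}\le e^{-c\log^3(1/p)}$, as required.
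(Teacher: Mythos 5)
Your proposal takes a genuinely different route from the paper. The paper reduces ``heavy block'' to an adaptive \emph{visit-count} structure: for a block $B$ of side $r$, it defines a sequence $\zeta_0<\zeta_1<\dots$ of visit times to $B$ separated by gaps $\geq p^{1-4\epsilon}r^2$, shows (Claim~\ref{claim:heavy}) that on $\{\tau_{\rm many}>t\}$ each such visit block contributes at most $p^{2-4\epsilon}\log^3(1/p)\,r^2$ new mirrors to $B$ so that a heavy block forces $j_*:=\lfloor\log^3(1/p)\rfloor$ visit blocks, and then shows $\Prob(\zeta_j\le(\zeta_{j-1}+T)\wedge\tau\mid\mathcal F_{\zeta_{j-1}})\le 1/2$ by waiting for the first relaxed time $\eta_j$ after $\zeta_{j-1}$ (which is within $t_*$ of $\zeta_{j-1}$ since $\tau\le\tau_{\rm rel}$), coupling to a fresh walk via Claim~\ref{claim:cou}, and appealing to Proposition~\ref{prp:dont-hit} and Lemma~\ref{claim:box}. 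This gives $(1/2)^{j_*}=e^{-c\log^3(1/p)}$. The decisive design feature is that only $\log^3(1/p)$ visits need to be ruled out, so a per-visit failure probability as large as $1/2$ is perfectly adequate.

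The gap in your proposal is Step~3, which you yourself flag as ``where the real work lies.'' You need the moment bound $\E[V(x,r)^q]\le q!\,G^q$ with $G=C(rp)^2p^{-\epsilon}$, and you reduce it to a conditional estimate $\Prob\bigl(X(kp^{-1})\in B(x,2r)\mid\mathcal F_{k'p^{-1}}\bigr)\le g(k-k')$ with $\sum_m g(m)\le G$. But the coupling to a fresh walk from a relaxed time succeeds only with probability $\geq 3/4$, and when it fails you have no control on the conditional distribution of $X(kp^{-1})$ --- the worst-case contribution to $g(m)$ is a constant $1/4$, uniformly in $m$, which is not summable. Your remark that the errors become ``summable over the gap'' by re-relaxing and using the sparsity lemma is not established, and it is unclear how to make it so: the failure event does not become less likely as the gap grows, and on that event the walk can in principle remain near $B(x,2r)$. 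The threshold you need to beat, $\nu(r)\gtrsim p^{-0.1}$ (for $r=p^{-1}$), is polynomially large, which is why you are pushed towards a moment/tail argument requiring a summable-in-gap conditional bound that the available tools do not obviously give. The paper sidesteps exactly this by arranging that only $\log^3(1/p)$ visits are needed, for which a constant per-visit bound suffices. Steps~1, 2, and~4 of your proposal are fine (the net over $(x,r)$ pairs, the first moment via $\Hypone{\cdot}$, the Markov-at-the-$q$-th-moment deduction), but without a proof of Step~3 the argument is incomplete, and as sketched that step appears to fail.

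One smaller remark: you suggest a ``joint induction of Propositions~\ref{prp:dont-hit} and~\ref{prp:noheavy}.'' The paper's mutual dependence between these is actually resolved not by a joint induction but by threading the composite stopping time $\tau=\tau_{\rm hea}\wedge\tau_{\rm rel}\wedge\tau_{\rm many}\wedge\tau_{\rm few}$ through all statements: Proposition~\ref{prp:dont-hit} only concludes up to $\tau$, and Proposition~\ref{prp:noheavy} proves $\tau_{\rm hea}>T\wedge\tau$, so there is no circularity.
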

\begin{proposition}[Relaxed times are dense]
\label{prp:relaxed}
If $\Hyp(T)$ holds for some $T\le \bar T$, then
\[
\Prob(\tau_{\rm rel} \leq T \wedge \tau ) \leq e^{-c\log^3(1/p)}.
\]
\end{proposition}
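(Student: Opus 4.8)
The plan is to bound the probability that at some time $t\le T\wedge\tau$ the window $[t-t_*,t]$ contains more than $0.1t_*$ times that fail to be relaxed. Since $t<\tau_{\rm hea}$ forces condition (1) of Definition~\ref{eq:def_relax} to hold (no heavy block at any time up to $\tau_{\rm hea}$), the only way a time $t'$ in such a window can fail to be relaxed, before $\tau$, is by failing the \emph{local} condition (2): $X(t')+sV(t')\in\mathcal M(t')$ for some $1\le s\le t_*$. So the whole task reduces to controlling the set of times $t'$ that fail to be locally relaxed, i.e.\ times for which the walk is currently ``aimed at'' a previously discovered mirror within kinetic distance. The key observation is that if $t'$ fails local relaxation because of the mirror $y = X(t')+sV(t')\in\mathcal M(t')$, then the walk was traveling straight towards $y$ since its last turn; hence there was a turn time $\sigma \le t'$ (with $t'-\sigma\le t_*$ on the event $\tau_{\rm few}>t'$) at which the walk entered this straight segment, and that turn time $\sigma$ belongs to the set $\mathcal S$ of Lemma~\ref{lem:sparsity} (this is exactly the scenario that $\mathcal S$ was designed to capture — at $\sigma$ the walk could turn to head towards the old mirror $y$). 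Moreover, for a fixed $\sigma\in\mathcal S$, the number of times $t'\in[\sigma,\sigma+t_*]$ with last-turn $\sigma$ is at most $t_*$, but more usefully, each such $\sigma$ can be ``charged'' by at most $t_*$ bad times $t'$; in fact a cleaner bookkeeping is that the number of non-locally-relaxed $t'$ in $[t-t_*,t]$ is at most $t_*\cdot|\mathcal S\cap[t-2t_*,t]|$ (or one can charge each bad $t'$ to its generating turn and use that turns are rare).

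The second ingredient is the sparsity bound. On the event $\{t<\tau\}$ we have $\tau_{\rm few}>t$, $\tau_{\rm many}>t$, and $\tau_{\rm hea}>t$. Lemma~\ref{lem:sparsity} applied to $[a,b]=[t-3t_*,t]$ gives
\[
|\mathcal S\cap[t-3t_*,t]| \le 2\big(|\mathcal M(t-3t_*)\cap B_{4t_*}(X(t-3t_*))| + |\mathcal T\cap[t-3t_*,t]|\big)^2.
\]
The second term is at most $O(t_* \cdot \log^3(1/p)) = O(\log^6(1/p)/p)$ by $\tau_{\rm many}>t$ (covering the window by $O(\log^3(1/p))$ intervals of length $p^{-1}$). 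The first term — the number of already-discovered mirrors inside a ball of radius $\simeq t_* \simeq p^{-1}\log^3(1/p)$ around the walk — is controlled precisely because $t<\tau_{\rm hea}$: by the non-heaviness of $B(X(t-3t_*),4t_*)$ at time $t-3t_*$ (applying the definition with $r = 4t_* \ge p^{-1}$) we get $|\mathcal M(t-3t_*)\cap B_{4t_*}(\cdot)| \le p^{1.9}(4t_*)^2 = O(p^{1.9}p^{-2}\log^6(1/p)) = O(p^{-0.1}\log^6(1/p))$. Plugging in, $|\mathcal S\cap[t-3t_*,t]| = O(p^{-0.2}\log^{12}(1/p))$, and then the number of non-locally-relaxed times in $[t-t_*,t]$ is at most $t_*\cdot|\mathcal S\cap[t-3t_*,t]| = O(p^{-1.2}\log^{15}(1/p))$ — wait, this must be compared against $0.1t_* \simeq 0.1 p^{-1}\log^3(1/p)$, so in fact the charging has to be sharper: one should charge each bad $t'$ to a \emph{distinct} resource. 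The correct accounting is that the bad times in $[t-t_*,t]$ form unions of straight runs, each run emanating from one $\sigma\in\mathcal S$; a straight run has length at most $t_*$, but we only need the run to be aimed at a mirror, which happens for at most $2$ positions of the walk per mirror (the $\star(y)\cap\star(y')$ argument again), so the count is really $O(|\mathcal S\cap[t-2t_*,t]|) = O(p^{-0.2}\log^{12}(1/p)) \ll 0.1 t_*$ for $p$ small. Thus \emph{deterministically} on $\{t<\tau\}$, the window $[t-t_*,t]$ contains at least $0.9t_*$ relaxed times, which means $\tau_{\rm rel}>t$ on $\{t<\tau\}$ — hence $\{\tau_{\rm rel}\le T\wedge\tau\}$ is essentially empty, and the tiny probability in the statement comes only from the non-deterministic inputs.

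Tracing where the $e^{-c\log^3(1/p)}$ actually enters: the non-heaviness of the relevant ball is not literally deterministic on $\{t<\tau\}$ as I stated — rather, $\tau_{\rm hea}$ is the first heavy-block time, so on $\{t<\tau\le\tau_{\rm hea}\}$ indeed every ball is non-heavy, giving the deterministic bound; the genuinely probabilistic loss is that to conclude one still needs $\tau_{\rm many}\wedge\tau_{\rm few}$ to exceed $t$, but those are part of $\tau$ too. So the residual probability must come from the definition of $\tau_{\rm rel}$ versus the ``aimed-at-a-mirror'' reformulation — specifically, a time can be non-relaxed because of condition (1) rather than (2), and condition (1) is only controlled up to $\tau_{\rm hea}$, which is again in $\tau$. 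Reexamining, the proof is in fact essentially deterministic given the sparsity lemma plus the definitions of the stopping times in $\tau$, and the exponentially small bound is inherited: the reduction shows $\{\tau_{\rm rel}\le T\wedge\tau\}\subseteq\{\text{sparsity estimate fails}\}$, but sparsity (Lemma~\ref{lem:sparsity}) holds deterministically — so one concludes the set is genuinely empty and the bound $0\le e^{-c\log^3(1/p)}$ is trivially true, OR (more likely the intended reading) the statement is bundled with Proposition~\ref{prp:noheavy} so that the $e^{-c\log^3(1/p)}$ really accounts for the one place where determinism breaks, which I would locate carefully in the writeup. The main obstacle, and the step I would spend the most care on, is the \textbf{charging argument}: rigorously showing that the number of non-locally-relaxed times in a window of length $t_*$ is $O(|\mathcal S|)$ up to a bounded factor — i.e.\ that each offending straight-run contributes a bounded number of bad times after accounting for the geometry — so that the product $|\mathcal S|\cdot(\text{something})$ does \emph{not} blow past $0.1t_*$. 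This requires being precise about what ``aimed at a mirror within $[1,t_*]$'' means along a straight run and invoking the $|\star(y)\cap\star(y')|\le 2$ bound in the right place, mirroring the proof of Lemma~\ref{lem:sparsity}.
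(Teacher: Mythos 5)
The main gap is exactly where you flagged uncertainty: the deterministic charging argument does not work, and your proposed fix is incorrect. If the walk discovers a fresh mirror at time $\sigma$ and turns to head straight toward a previously discovered mirror $y = X(\sigma) + R V(\sigma)$ with $R \le 2t_*$, then \emph{every} time $\sigma + k$ along that straight run with $R - t_* \le k < R$ is non-locally-relaxed (since $X(\sigma+k) + (R-k)V(\sigma+k) = y$ with $1 \le R-k \le t_*$). This can be as many as $t_*$ consecutive bad times from a \emph{single} $\sigma \in \mcal{S}$ — not $O(1)$ or $2$ per mirror. The ``$|\star(y)\cap\star(y')| \le 2$'' fact from Lemma~\ref{lem:sparsity} bounds the intersection of two distinct mirrors' stars; it does not say that a straight segment aims at a fixed mirror for only two of its positions. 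So the best deterministic bound your method yields is $t_* \cdot |\mcal{S}\cap[t-3t_*,t]| \approx p^{-1.2}\log^{15}(1/p)$, which overshoots the $0.1 t_* \approx 0.1p^{-1}\log^3(1/p)$ budget by a factor of $p^{-0.2}$, and there is no way to sharpen this deterministically. Your subsequent conclusion that $\{\tau_{\rm rel} \le T\wedge\tau\}$ is ``genuinely empty'' and the bound is ``trivially true'' is therefore false — the event is not empty, and the exponentially small probability is doing real work.

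The correct argument is genuinely probabilistic and this is what the paper does. It introduces stopping times $\xi_0 = t-t_*$, $\xi_{j+1} = \min\{s > \xi_j + 2p^{-1} : s\notin\mcal{R}\}$, so that the bad set in $[t-t_*,t]$ is covered by $\le |\{j : \xi_j < t\wedge\tau\}|$ intervals of length $2p^{-1}$; it then shows the conditional bound $\P(\xi_{j+1} < t\wedge\tau \mid \cF_{\xi_j}) \le 4/5$, so that $|\{j : \xi_j < t\wedge\tau\}|$ is geometrically unlikely to exceed $0.05\log^3(1/p)$. The escape probability $\ge 1/5$ comes from three events: with good probability the walk discovers a fresh mirror within $2p^{-1}$ of $\xi_j$ (a Bernoulli estimate), the driving direction at that mirror is not $\pm V$ so the walk actually turns (a $\ge (2d-3)/(2d-1)$ bound), and no element of $\mcal{T}$ lands in $\mcal{S}\cap(\xi_j, t\wedge\tau]$ (here the sparsity lemma \emph{is} used, combined with the fact that each element of $\mcal{S}$ is in $\mcal{T}$ independently with probability $p$). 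On the intersection, Lemma~\ref{lem:ST-to-R} guarantees the walk is relaxed for the rest of the window. Your proposal correctly identifies the reduction to local relaxation and the relevance of $\mcal{S}$ and the sparsity bound, but omits this Markov-chain/geometric-trial structure entirely, and this cannot be patched by sharper combinatorics.
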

On the one hand, the proof of Proposition~\ref{prp:noheavy} uses Proposition~\ref{prp:dont-hit} to show that a walk cannot revisit a given block too many times.
On the other hand, the proof of Proposition~\ref{prp:relaxed} is fundamentally an analysis of the walk at the ballistic timescale (because $\tau_{\rm rel}$ is a stopping time based on the behavior of the walk on such time scales).  It requires an argument that relates the set $\Ss$ defined in~\eqref{eq:S} to the set of relaxed times.

\medskip

Together, Proposition~\ref{prp:noheavy} and Proposition~\ref{prp:relaxed} imply
that $\tau_{\rm rel}$ is large with high probability.
\begin{corollary}
\label{cor:taurel}
If $\Hyp(T)$ holds for some $T\le \bar T$, then
\[
\Prob(\tau \leq T)
\leq e^{-c\log^3 (1/p)}.
\]
\end{corollary}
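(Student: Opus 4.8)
The plan is to unwind the definition $\tau = \tau_{\rm hea}\wedge\tau_{\rm rel}\wedge\tau_{\rm many}\wedge\tau_{\rm few}$ and bound the probability that $\tau\le T$ by the sum of the probabilities that each of the four stopping times is the first one to drop below $T$. First I would write
\[
\Prob(\tau\le T)\le \Prob(\tau_{\rm hea}\le T\wedge\tau)+\Prob(\tau_{\rm rel}\le T\wedge\tau)+\Prob(\tau_{\rm many}\le T\wedge\tau)+\Prob(\tau_{\rm few}\le T\wedge\tau),
\]
using that on $\{\tau\le T\}$ at least one of the four stopping times is $\le T\wedge\tau$. The first two terms are controlled directly by Proposition~\ref{prp:noheavy} and Proposition~\ref{prp:relaxed}, each by $e^{-c\log^3(1/p)}$, since $T\le\bar T$ and $\Hyp(T)$ is assumed. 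For the last two terms I would simply drop the intersection with $\tau$ and bound $\Prob(\tau_{\rm many}\le T\wedge\tau)\le\Prob(\tau_{\rm many}\le \bar T)$ and similarly for $\tau_{\rm few}$; these are both at most $e^{-c\log^3(1/p)}$ by Lemma~\ref{lem:fewhea}, using $T\le\bar T=\lfloor e^{\log^2(1/p)}\rfloor$. Summing the four contributions and absorbing the constant factor into $c$ (which is allowed since $c$ may change from line to line) gives the claim.

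The only subtlety worth spelling out is the first reduction step: one must make sure that on the event $\{\tau\le T\}$, whichever of the four stopping times realizes the minimum is indeed $\le T\wedge\tau$ (not merely $\le T$). This is immediate because if, say, $\tau_{\rm hea}=\tau$, then $\tau_{\rm hea}=\tau\le T$ forces $\tau_{\rm hea}\le T\wedge\tau$; and the union over the four cases covers $\{\tau\le T\}$ exactly because $\tau$ is the minimum of the four. So no real work is hidden here.

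I do not expect any genuine obstacle in this corollary: it is a purely bookkeeping deduction from Propositions~\ref{prp:noheavy} and~\ref{prp:relaxed} together with Lemma~\ref{lem:fewhea}. All the difficulty has been pushed into those earlier statements — in particular into Proposition~\ref{prp:relaxed}, whose proof (relating the sparse set $\Ss$ to the density of relaxed times via Lemma~\ref{lem:sparsity}) is the substantive ballistic-scale analysis, and into Proposition~\ref{prp:noheavy}, which relies in turn on Proposition~\ref{prp:dont-hit} to prevent the walk from revisiting a fixed block too often.
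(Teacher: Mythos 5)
Your proof is correct and matches the paper's argument exactly: the same union-bound decomposition over the four constituent stopping times, with Propositions~\ref{prp:noheavy}, \ref{prp:relaxed} and Lemma~\ref{lem:fewhea} supplying the individual bounds. Nothing further to add.
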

\begin{proof}
Observe that
\[
\{\tau \leq T \}
\subset \{\tau_{\rm hea} \leq T \wedge \tau  \}
\cup \{\tau_{\rm rel} \leq T \wedge \tau  \}
\cup\{\tau_{\rm many} \leq T \wedge \tau  \}
\cup\{\tau_{\rm few} \leq T \wedge \tau  \}.
\]
The corollary now follows from Proposition~\ref{prp:noheavy}, Proposition~\ref{prp:relaxed} and Lemma~\ref{lem:fewhea}.
\end{proof}

\subsection{Escaping is easy at relaxed times}
\label{sec:donthit}
In this section we prove Proposition~\ref{prp:dont-hit}. Recall the definition of the set $\mathcal S$ from \eqref{eq:S}.

\begin{proof}[Proof of Proposition~\ref{prp:dont-hit}]
We will prove~\eqref{eq:orca}, the proof of~\eqref{eq:humpback} being a small modification.

Recall that $W$ is driven by $\{\tilde{W}(s)\}_{s\ge 0}$ and let $W'=(X',V')$ be a regenerated walk starting from $X'(0)=X(t)$ in direction $V'(0)=V(t)$ that is driven by $\{\tilde{W}(t+s)\}_{s\geq 0}$. Recall also from Claim~\ref{claim:cou} that $X(s)=X'(s-t)$ for all $s\in [t,\tau _{\rm hit}(t)]$.
Next let $j_1:=\lfloor pT \rfloor $. For an integer $j\in [1,j_1]$ let $t_j=(j-1)\lfloor p^{-1} \rfloor $ and let $I_j:=[t+t_j,t+t_j+2t_*]$. Define the events
\begin{equation*}
    \mathcal A _j: = \big\{ d\big( X'(t_j),\mathcal M(t) \big) \le 3t_* \big\} \quad \text{and} \quad \mathcal B _j : = \big\{ [0,\tau] \cap  I_j\cap \mathcal S \cap \mathcal T \neq \emptyset  \big\}.
\end{equation*}

Since $t$ is fixed throughout the proof, we write $\tau_{\rm hit}. =\tau_{\rm hit}(t)$. We split the proof into two steps.
First we prove the inclusion
\begin{equation}
\label{eq:tau-hit-bd}
    \big\{ t \text{ is relaxed}\big\} \cap \big\{ \tau _{\rm hit}\le  (t +T)\wedge \tau \big\} \subseteq  \bigcup _{j=1}^{j_1}\mathcal A _j \cap \mathcal B _j,
\end{equation}
and then we bound the probability of each $\mcal{A}_j\cap \mcal{B}_j$.
Intuitively, this inclusion says that in order to interact with the past in $[0,t]$, we first have to reach a $\approx p^{-1}$ neighborhood of a mirror in $\mathcal M(t)$ and soon after we have to turn exactly at the right time in order to bump into it.

\medskip

\noindent \textit{Step 1.} Proof of~\eqref{eq:tau-hit-bd}.\\
We assume that $t$ is relaxed and that $\tau_{\rm hit} \leq (t+T)\wedge \tau $ and prove that $\mcal{A}_j\cap \mcal{B}_j$ holds for some $1\leq j\leq j_1$.
First we argue that it suffices to show that
\begin{equation}
\label{eq:find-good-time}
[t,\tau ]\cap [\tau_{\rm hit}-t_*,\tau_{\rm hit}] \cap \mcal{S} \cap \mcal{T} \not=\emptyset.
\end{equation}
Indeed, there exists $j\in [1,j_1]$ such that $t+t_j \in  [(\tau_{\rm hit}-2t_*)\vee t,(\tau_{\rm hit}-t_*)\vee t]$, and we claim that $\mcal{A}_j$ and $\mcal{B}_j$ hold for this choice of $j$.  It is clear from~\eqref{eq:find-good-time} and this choice of $t_j$ that $\mcal{B}_j$ holds.
To see that $\mcal{A}_j$ holds, first note that the entire history of the trajectory up to time $t$ is contained in axis-aligned line segments of length $t_*$ centered at each mirror in $\mcal{M}(t)$.  That is, since $t\leq \tau_{\rm few}$  we have
\begin{equation}
\label{eq:history-in-union}
\big\{ X(s) : s\in [\alpha (t),t] \big\} \subset \big\{ x+rv : x\in \mcal{M}(t), v\in \{\pm e_j\}, r \in [0,t_*] \big\}.
\end{equation}
Therefore it follows that
$d(X'(\tau _{\rm hit}(t)-t),\mathcal M(t) )=d(X(\tau_{\rm hit}),\mcal{M}(t)) \leq t_*$, which implies that $d(X'(t_j), \mcal{M}(t)) \leq 3t_*$ and $\mathcal A_j$ holds.

Now we turn to the proof of~\eqref{eq:find-good-time}.
By the definition of $\tau _{\rm hit}$, there must
exist $s^*\in[\alpha(\tau_{\rm hit}),t]$ such that $X(s^*)=X(\tau_{\rm hit})$ and one of the following holds:
either $s^*\in\mcal{T}$ (Figure~\ref{fig:twocases}(a)) or $\tau_{\rm hit}\in\mcal{T}$ and $X(s^*)\notin \mathcal M(t)$ (Figure~\ref{fig:twocases}(b)).

Suppose first that we interact with a straight line segment, so that $\tau_{\rm hit}\in\mcal{T}$ and $X(s^*)\notin \mathcal M(t)$. In this case we claim that in fact $\tau_{\rm hit}\in\Ss$. Indeed, since $t\le \tau _{\rm few}$ we have that $X(s) -rV(s)\in\mcal{M}(t)$ for
some $r\leq t_*$.  Moreover, $V(\tau_{\rm hit})\not=\pm V(s)$ (the mirror walk can only trace a line segment at most once), so this implies that indeed $\tau_{\rm hit}\in \Ss$.

The second case to consider is that we hit a mirror, in which case $X(\tau_{\rm hit}) = X(s^*)\in\mcal{M}(t)$.  Let $\sigma := \max \mcal{R}\cap [t,\tau_{\rm hit}]$ be the last time the walk was relaxed before $\tau_{\rm hit}$.  Because we assume $t$ is relaxed and $\tau_{\rm hit}\le \tau \le  \tau_{\rm rel}$, it follows that $\sigma \in [t,\tau ]\cap [\tau_{\rm hit} - t_*, \tau_{\rm hit}]$.  Now define
\[
\sigma' = \min\{s\geq \sigma : V(s)\not=V(s-1) \}\,.
\]
We will show that $\sigma'\in [t,\tau ]\cap [\tau_{\rm hit}-t_*,\tau_{\rm hit}] \cap \mcal{S} \cap \mcal{T}$.  First, we show  that $\sigma' < \tau_{\rm hit}$. Indeed, otherwise $X(\sigma)+rV(\sigma)=X(\tau _{\rm hit}) = X(s^*)$ for some $r\le t_*$ contradicting the fact that $\sigma$ is relaxed. Now, using that $\sigma'$ is \textit{not} relaxed  and that $\tau _{\rm hea}\ge \tau _{\rm hit}$, it follows that $X(\sigma')+r V(\sigma')\in \mcal{M}(\sigma')$ for some $r\in[1,t_*]$. But now since $V(\sigma')\not= \pm V(\sigma'-1)$, that means that $\sigma'\in\mcal{S}$. Finally, observe that since $\sigma $ is relaxed it follows that at $\sigma '$ the walk discovered a new mirror and so $\sigma '\in \mathcal T$. Hence, $\sigma'\in[(\tau_{\rm hit}-t_*)\wedge t,\tau_{\rm hit}] \cap \mcal{S} \cap \mcal{T}$, as desired.

\medskip

\noindent
\textit{Step 2.} Bounding $\Prob(\mcal{A}_j\cap\mcal{B}_j\mid \mathcal F _t)$.\\
We start by bounding the probability of $\mathcal A _j$. We cover a block of side length $3t_*\le 3p^{-1}\log^3 (1/p)$ by $C \log^{3d} (1/p)$ blocks of side length $p^{-1}$, so that for any $y\in \mathcal M(t)$ we have by
Assumption~\hyperref[h1]{\bf{(H1)}}
\begin{equation*}
    \mathbb P \big( \| X'(t_j)-y \|\le  3t_* \mid \mathcal F _t\big) \le  C\log ^{3d}(1/p) \cdot p^{-2\epsilon }(pt_j)^{-d/2} \le p^{-3\epsilon }j^{-d/2}.
\end{equation*}
Thus, letting $r_j:=\sqrt{t_j/p} \log ^8(1/p) \approx p^{-1}\sqrt{j}\log ^8(1/p)$, and union bounding over the mirrors $y\in M(t)\cap B(X(t),3r_j)$   we obtain
\begin{equation*}
\begin{split}
    \mathbb P \big( \mathcal A _j \mid \mathcal F _t \big) &\le \mathbb P ( X'(t_j) \notin B(X(t),r_j) \mid \mathcal F _t\big) + |\mathcal M(t)\cap B(X(t),2r_j)| p^{-3\epsilon }j^{-d/2} \\
    &\le e^{-c\log ^2 (1/p)} +p^{1.9-3\epsilon }(2r_j)^2 j^{-d/2} \le p^{-0.2}j^{-1},
\end{split}
\end{equation*}
where in the second inequality we used Assumption~\hyperref[h2]{\bf{(H2)}} and that $B(X(t),3r_j)$ is not heavy at time $t$ since $t$ is relaxed. In the last inequality we used that $d\ge 4$.

Next, we bound the probability of $\mathcal B_j$ given $\mathcal A _j$. Since $\mathcal A_j$ is measurable in $\mathcal F _{t+t_j-1}$, it suffices to bound the probability of $\mathcal B _j$ given $\mathcal F _{t+t_j-1}$. Using that $\tau \le \tau _{\rm many}$, we have that $|[0,\tau ] \cap I_j \cap \mathcal T|\le C\log ^6 (1/p)$ and therefore using that $\tau < \tau_{\rm hea}$ and  Lemma~\ref{lem:sparsity} we obtain $|[0,\tau ]\cap I_j \cap \mathcal S |\le p^{-1/3}$. Since $\mathcal S \cap I_j$ is predictable, we can argue as in the proof of Proposition~\ref{prop:coup-for-variance} that the probability to discover an element of $\mathcal T$ in the first $\lfloor p^{-1/3} \rfloor $ elements of $\mathcal S\cap I_j$ is bounded by $p^{2/3}$. This gives
\begin{equation*}
\mathbb P (\mathcal B _j \mid \mathcal F _{t+t_j-1}) = \mathbb P \big( [0,\tau ]\cap I_j \cap \mathcal S \cap \mathcal T \neq \emptyset \mid \mathcal F _{t+t_j-1} \big) \le p^{2/3}.
\end{equation*}
Thus, using that $\mathcal A _j$ is measurable in $\mathcal F _{t+t_j-1}$ we obtain that $\mathbb P (\mathcal A _j \cap \mathcal B_j \mid \mathcal F _t)\le p^{1/3}j^{-1}$. Hence on the event that $t$ is relaxed we have
\begin{equation*}
\begin{split}
    \mathbb P \big(  \tau _{\rm hit}(t)\le t+T \mid \mathcal F _t \big)  \le \sum_{j=1}^{j_1} p^{1/3}j^{-1} \le 1/4,
\end{split}
\end{equation*}
where in the last inequality we used that $T\le e^{\log ^2(1/p)}$.
\end{proof}

\subsection{Heavy blocks are unlikely}
In this section we prove Proposition~\ref{prp:noheavy} which states that heavy blocks are unlikely to appear quickly.

The main technical ingredient we use is the following estimate which shows that the walk is unlikely to visit any block after a sufficiently long time.
\begin{lemma}\label{claim:box}
If $\Hyp(T)$ holds for some $T\le \bar T$, then for any $r\ge p^{-1}$ and any block $B$ of side length $r$ we have that
    \begin{equation*}
        \mathbb P \big( \exists  t\in [p^{1-3\epsilon }r^{2},T], \  X(t)\in B \big) \le 1/4.
    \end{equation*}
\end{lemma}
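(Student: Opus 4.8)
The plan is to exploit the fact that $t_0:=p^{1-3\epsilon}r^2=p^{-3\epsilon}\cdot(pr^2)$ is much larger than the diffusive time $\simeq pr^2$ a diffusive walk needs to exit a block of side $r$: by time $t_0$ such a walk has moved a distance $\sqrt{t_0/p}=p^{-3\epsilon/2}r\gg r$, and the anticoncentration estimate of~\hyperref[h1]{(H1)} — which at any time $t\le\bar T$ bounds by $(pt)^{-d/2}p^{-2\epsilon}$ the probability of lying in a given block of side $p^{-1}$ — then beats the $\simeq(pr)^d$ fine-scale blocks needed to tile $B$. If $t_0>T$ there is nothing to prove, so I assume $t_0\le T$.

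The one new ingredient is a device to pass from ``$X$ visits $B$ somewhere inside a time window'' to ``$X$ lies near $B$ at the left endpoint of that window.'' I would take the window length $w:=\lceil pr^2\rceil$; since $r\ge p^{-1}$ one has $p^{-1}\le w\le 2pr^2$, and this is the largest scale at which the diffusive displacement over a window, $\log^8(1/p)\sqrt{w/p}\le 2r\log^8(1/p)$, stays comparable to $r$. Let $B^{+}$ be the $\big(\log^8(1/p)\sqrt{w/p}\big)$-neighbourhood of $B$; it is contained in a cube of side $\le 5r\log^8(1/p)$, hence covered by at most $C(pr)^d\log^{8d}(1/p)$ blocks of side $p^{-1}$, so~\hyperref[h1]{(H1)} (in the form of its footnote) gives $\mathbb P(X(t)\in B^{+})\le C(pr)^d\log^{8d}(1/p)\,(pt)^{-d/2}p^{-2\epsilon}$ for every $1\le t\le \bar T$. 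Now if $X(t)\in B$ for some integer $t\in[t_0,T]$ and $j:=\lfloor t/w\rfloor$, then $jw\le T$, $(j+1)w\ge t_0$ (so $j\gtrsim t_0/w\simeq p^{-3\epsilon}\gg1$), and either $\|X(t)-X(jw)\|>\log^8(1/p)\sqrt{w/p}$ or else $X(jw)\in B^{+}$; since $w\le t_0 \le T$, every time in each window lies in $[0,T]$. Hence
\[
\mathbb P\big(\exists\,t\in[t_0,T]:X(t)\in B\big)\ \le\ \sum_{j\,:\,jw\le T,\ (j+1)w\ge t_0}\Big(\mathbb P\big(X(jw)\in B^{+}\big)+\mathbb P\big(\exists\,t\in[jw,(j+1)w\wedge T]:\|X(t)-X(jw)\|>\log^8(1/p)\sqrt{w/p}\big)\Big).
\]

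For the first family of terms I would insert the bound above and sum the geometric-type tail $\sum_{j\ge t_0/w}j^{-d/2}\le C(t_0/w)^{1-d/2}=Cp^{-3\epsilon(1-d/2)}$; recalling $w\simeq pr^2$ and $t_0\simeq p^{-3\epsilon}pr^2$, the factors $(pr)^d$ from tiling $B^{+}$, $(pw)^{-d/2}p^{-2\epsilon}$ from anticoncentration, and $(t_0/w)^{1-d/2}$ from the tail of the $j$-sum combine so that every power of $r$ cancels and one is left with $C\log^{8d}(1/p)\,p^{\epsilon(3d/2-5)}$, which for $d\ge4$ is at most $C\log^{8d}(1/p)\,p^{\epsilon}\le\tfrac18$ for $p$ small. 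For the second family,~\hyperref[h2]{(H2)} together with a union bound over the $\le w+1$ times of each window gives $\le 2w\,e^{-2\log^2(1/p)}$ per window; there are $\le T/w+1$ windows and $w\le T$, so the total is $\le C\,T\,e^{-2\log^2(1/p)}\le C\,e^{-\log^2(1/p)}\le\tfrac18$ for $p$ small, using $T\le\bar T=e^{\log^2(1/p)}$. Adding the two contributions yields the bound $1/4$.

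The only genuine difficulty is the exponent arithmetic, and this is precisely where $d\ge4$ enters: the net gain $p^{\epsilon(3d/2-5)}$ is positive exactly when $3d/2>5$, i.e.\ $d\ge4$ (it is $p^{\epsilon}$ at $d=4$ and diverges for $d=3$), which is also why Proposition~\ref{prp:noheavy}, for which this lemma is the engine, needs $d\ge4$. This gain only materializes once the tiling count $(pr)^d$, the anticoncentration factor $(pt_0)^{-d/2}$, the tail $(t_0/w)^{1-d/2}$ of the $j$-sum and the window-count factor $w^{-1}$ are combined so that all powers of $r$ disappear — the choice $w\simeq pr^2$ is what makes this balance work, as it keeps the enlargement $B\to B^{+}$ from costing more than a $\log^{O_d(1)}(1/p)$ factor in volume. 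Beyond this, the only checks are harmless: $w\ge p^{-1}$, $t_0/w\ge 2$ so that the $j$-sum is a genuine convergent tail (which also uses $d\ge3$), and $jw\le T\le\bar T$ throughout so that~\hyperref[h1]{(H1)} and~\hyperref[h2]{(H2)} apply. No input beyond $\Hyp(T)$ is needed.
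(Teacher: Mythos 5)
Your proof is correct and essentially the same as the paper's: both enlarge $B$ to the $\approx r\log^8(1/p)$-neighbourhood $B^+$, tile $B^+$ by $\approx(pr)^d\log^{8d}(1/p)$ cubes of side $p^{-1}$, apply~$\Hypone{t}$ at the window endpoints $t_j\approx jpr^2$ with $j\gtrsim p^{-3\epsilon}$, control the within-window overshoot by~$\Hyptwo{T}$, and sum the tail $\sum_{j\gtrsim p^{-3\epsilon}}j^{-d/2}$ to land on the net factor $p^{\epsilon(3d/2-5)}\log^{8d}(1/p)$, small exactly when $d\ge4$. The choice of window size $\approx pr^2$ and the exponent bookkeeping match the paper's argument step for step.
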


\begin{proof}
    Fix $r\ge p^{-1}$ and a block $B$ of side length $r$. Let $B^+:= \{x\in \mathbb Z ^d : d(x,Q) \le 2r \log ^8(1/p)\}$. Let $j_0:=\lfloor p^{-3\epsilon } \rfloor, j_1:=\lfloor T/\lfloor pr^2\rfloor  \rfloor $ and for $j\in [j_0,j_1+1]$ let $t_j :=j\lfloor pr^2 \rfloor $. We have that
    \begin{eqnarray*}
       \lefteqn{ \mathbb P \big( \exists  t\in [p^{1-3\epsilon }r^{2},T], \  X(t)\in B \big)}
       \\
       &\le& \sum _{j=j_0}^{j_1} \mathbb P \big( X(t_j)\in B^+  \big) +\mathbb P \Big( \max _{s\in [t_j,t_{j+1})} \|X(s)-X(t_j)\| \ge 2r \log ^8 p  \Big)\\
        &\le& \sum _{j=j_0}^{j_1} Cp^{-2\epsilon } j^{-d/2}\log^{8d} (1/p)  +(t_{j+1}-t_j)e^{-2\log ^2(1/p)}\le 1/4,
    \end{eqnarray*}
    where in the second inequality we used Assumption~\hyperref[h1]{\bf{(H1)}} to bound the first probability and Assumption~\hyperref[h2]{\bf{(H2)}} to bound the second probability, and where in the last inequality we used that $d\ge 3$ and $T\le e^{\log^2(1/p)}$.
\end{proof}

The idea is to use Lemma~\ref{claim:box} to show that any block is unlikely to be revisited many times.  Let $B$ be a block of side length $r\ge p^{-1}$. Define a sequence of stopping times $\zeta_0=0$ and
\begin{equation*}
    \zeta_j(B) := \min \big\{ t>\zeta _{j-1}(B) +p^{1-4\epsilon }r^{2} : X(t)\in B \big\}.
\end{equation*}
Next, let $j_*:=\lfloor \log ^3(1/p) \rfloor $ and let $\tau _{\rm visit}(B):=\zeta _{j_*}$ be the first time $W$ visits the block $B$ more that $j_*$ times. Finally, let $\tau _{\rm visit}:=\min _{B}\tau _{\rm visit}(B)$, where the minimum is taken over all blocks of side length $r \ge p^{-1}$.

Almost by definition, $\tau_{\rm visit}$ is a lower bound for $\tau_{\rm hea}$.
\begin{claim}\label{claim:heavy}
We have that $\tau_{\rm hea} \geq \tau_{\rm visit} \wedge \tau_{\rm many}$.
\end{claim}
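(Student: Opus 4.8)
The plan is to argue by contradiction. Suppose $\tau_{\rm hea} < \tau_{\rm visit} \wedge \tau_{\rm many}$, set $t := \tau_{\rm hea}$, and let $B = B(x,r)$ with $r \ge p^{-1}$ be a block that is heavy at time $t$, so that $|B \cap \mathcal{M}(t)| \ge p^{1.9} r^2$. Since $t < \tau_{\rm visit} \le \tau_{\rm visit}(B)=\zeta_{j_*}(B)$ and $t<\tau_{\rm many}$, the goal is to contradict heaviness by bounding $|B\cap\mathcal{M}(t)|$ from above. First I would prove a covering statement: every $s\in[\alpha(t),t]$ with $X(s)\in B$ lies in $\bigcup_{j=1}^{j_*}\big[\zeta_{j-1}(B),\,\zeta_{j-1}(B)+p^{1-4\epsilon}r^2\big]$. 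Indeed, since $s\le t<\zeta_{j_*}(B)$ there is a largest index $j\le j_*-1$ with $\zeta_j(B)\le s$; if $s>\zeta_j(B)+p^{1-4\epsilon}r^2$ then $s$ would be an eligible time in the definition of $\zeta_{j+1}(B)$, forcing $\zeta_{j+1}(B)\le s$ and contradicting maximality of $j$. Hence $s$ belongs to the $j$-th interval of the union.

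Next I would count the mirrors. Every element of $\mathcal{M}(t)\setminus\{0\}$ is of the form $X(s)$ for some $s\in\mathcal{T}\cap[\alpha(t),t]$, so by the covering statement
\[
|B\cap\mathcal{M}(t)| \;\le\; 1+\sum_{j=0}^{j_*-1}\big|\mathcal{T}\cap[\zeta_j(B),\,\zeta_j(B)+p^{1-4\epsilon}r^2]\cap[\alpha(t),t]\big|.
\]
Because $r\ge p^{-1}$, each interval $[\zeta_j,\zeta_j+p^{1-4\epsilon}r^2]$ has length at least $p^{-1-4\epsilon}\ge p^{-1}$ and, after intersecting with $[\alpha(t),t]$, can be covered by $\lceil p^{2-4\epsilon}r^2\rceil$ windows of length $p^{-1}$ each ending by time $t$. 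On the event $t<\tau_{\rm many}$ each such window contains fewer than $\log^3(1/p)$ elements of $\mathcal{T}$, so each summand is at most $2p^{2-4\epsilon}r^2\log^3(1/p)$. Summing over the $j_*=\lfloor\log^3(1/p)\rfloor$ terms gives $|B\cap\mathcal{M}(t)|\le 1+2p^{2-4\epsilon}r^2\log^6(1/p)$. Since $\epsilon=0.01$, we have $2-4\epsilon=1.96>1.9$, and since $p^{1.9}r^2\ge p^{-0.1}\to\infty$ as $p\to0$, for all $p$ small enough this upper bound is strictly smaller than $p^{1.9}r^2$, contradicting the heaviness of $B$. This forces $\tau_{\rm hea}\ge\tau_{\rm visit}\wedge\tau_{\rm many}$.

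The main obstacle, as the word ``almost'' in the surrounding discussion suggests, is purely bookkeeping: one must make the covering of the visit times to $B$ by the $j_*$ waiting intervals precise, and one must ensure the bound coming from $\tau_{\rm many}$ legitimately applies to every length-$p^{-1}$ window used in the cover (so it is cleanest to intersect first with $[\alpha(t),t]$ and only invoke $\tau_{\rm many}$ once enough time has elapsed that it is informative, the very short-time regime being handled by the trivial smallness of $\mathcal{M}(t)$). Once the combinatorics is set up, the exponent budget is comfortable: the gap $2-4\epsilon-1.9=0.06>0$ absorbs both the $\log^6(1/p)$ factor from the two nested polylogarithmic counts and the harmless additive $1$.
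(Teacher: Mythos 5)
Your proof is correct and matches the paper's argument: the paper also covers the visit times to a fixed block $B$ by the $j_*$ waiting intervals $[\zeta_j, \zeta_j+p^{1-4\epsilon}r^2]$, then uses $\tau_{\rm many}$ to bound the number of elements of $\mathcal T$ in each interval by $\approx p^{2-4\epsilon}r^2\log^3(1/p)$, and sums to get $|B\cap\mathcal M(t)|\le j_*\, p^{2-4\epsilon}\log^3(1/p)\, r^2\le p^{1.9}r^2$. The only cosmetic differences are that the paper states the bound directly rather than by contradiction and leaves the covering and window bookkeeping implicit, whereas you spell both out.
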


\begin{proof}
New mirrors in $B$ can be discovered only in the time intervals $[\zeta _j,\zeta _j+p^{1-4\epsilon }r^2]$ for $j\ge 0$. Moreover, by the definition of $\tau _{\rm many}$, for any $j\ge 0$ we have
\[
\big| \mcal{T} \cap [\zeta _j,(\zeta _j+p^{1-4\epsilon }r^2)\wedge \tau _{\rm many}] \big|
\leq p^{2-4\epsilon }\log^3(1/p) r^2.
\]
Thus, for all $t\leq \tau_{\rm visit} \wedge \tau_{\rm many}$ we have
\[
|B \cap \mathcal M(t)| \le j_* p^{2-4\epsilon }\log^3(1/p) r^2 \le  p^{1.9}r^2.
\]
This finishes the proof of the lemma.
\end{proof}

Now we are able to complete the proof of Proposition~\ref{prp:noheavy}.

\begin{proof}[Proof of Proposition~\ref{prp:noheavy}]
By Claim~\ref{claim:heavy}, the proposition will follow by proving
\begin{equation}\label{eq:tau visit is large}
    \mathbb P \big( \tau _{\rm visit} \le T \wedge \tau \big) \le e^{-c\log ^3 (1/p)}.
\end{equation}

To this end, it suffices to prove that for any block $B$ of side length $p^{-1}\le r \le T$ at distance at most $T$ from the origin we have
\begin{equation}\label{eq:4}
    \mathbb P \big( \tau _{\rm visit}(B) \le T \wedge \tau \big) \le e^{-c\log ^3 (1/p)}.
\end{equation}
  Indeed, a block $B$ at distance larger than $T$ from the origin satisfies $\tau _{\rm visit}(B) > \zeta _1(B) \ge T$ as the walk cannot reach it by time $T$. Moreover, a block $B$ of side length $r\ge T$ satisfies $\tau _{\rm visit}(B)> \zeta _1(B)\ge p^{1-4\epsilon }r^2\ge T$ since $T\ge p^{-1}$. Thus, the bound in \eqref{eq:tau visit is large} follows from \eqref{eq:4} and a union bound as there are at most $(4T)^{d+1}$ blocks $B$ of side length $p^{-1}\le r \le T$ at distance at most $T$ from the origin.

From now on, we fix such a block $B$ and establish the estimate \eqref{eq:4}. It suffices to prove that for all $j\ge 1$ we have
\begin{equation}
\label{eq:zetaj-ineq}
    \mathbb P \big( \zeta _j \le (\zeta _{j-1}+T)\wedge \tau
 \mid \mathcal F _{\zeta _{j-1}}\big) \le 1/2
\end{equation}
(where the $\zeta_j$ are associated with the fixed block $B$).
Indeed, if this bound holds then
\begin{equation*}
    \mathbb P \big( \tau _{\rm visit} (B) \le T \wedge \tau \big) \le \mathbb P \big( \forall j\le j_*,\ \zeta _j \le (\zeta _{j-1}+T)\wedge \tau \big) \le (1/2)^{j_*} \le e^{-c\log ^3(1/p)}.
\end{equation*}

Next, let $\eta_ j := \min \{t\ge \zeta _{j-1} : t\in \mathcal R\}$ be the first relaxed time after $\zeta _{j-1}$.
To establish~\eqref{eq:zetaj-ineq} it suffices to show that
\begin{equation}\label{eq:3}
    \mathbb P \big( \zeta _j \le (\zeta _{j-1}+T)\wedge \tau   \mid \mathcal F _{\eta _j }\big) \le 1/2,
\end{equation}
since \eqref{eq:zetaj-ineq} follows by taking the conditional expectation of both sides of \eqref{eq:3} with respect to $\mathcal F _{\zeta _{j-1}}\subset \mathcal F _{\eta _{j}}$. The inequality in \eqref{eq:3} clearly holds (even with $1/2$ replaced by $0$) on the event $\{\eta_j>\zeta_{j-1}+t_*\}\in \mathcal F _{\eta _j}$ since on this event by the definition of $\tau _{\rm rel}$ we have $\tau \leq \tau_{\rm rel} \leq \zeta_{j-1}+t_* < \zeta_j$. Hence, we may work on the event that $\eta_j\leq \zeta_{j-1}+t_*$.

Finally, to prove \eqref{eq:3} on the event $\{\eta_j\leq \zeta_{j-1}+t_*\}$ we couple the walk $W$ after time $\eta _j$ with an independent mirror walk. More precisely, let $W'$ be a regenerated mirror walk starting from $X'(0)=X(\eta _j)$ in direction $V'(0)=V(\eta _j)$ that is driven by $\{\tilde{W}(\eta _j +s)\}_{s\ge 0}$. Recall from Claim~\ref{claim:cou} that $X(t+\eta _j)=X'(t)$ for all $t\le \tau _{\rm hit}(\eta _j)-\eta _j$. Thus, on the event $\eta _j \le \zeta _{j-1}+t_*$ we obtain
\begin{equation*}
\begin{split}
    \mathbb P \big( \zeta _j \le (\zeta _{j-1}+T)\wedge   \tau
 \mid \mathcal F _{\eta _j }\big) &\le \mathbb P \big( \exists t\in [\zeta _{j-1}+p^{1-4\epsilon }r^2, (\zeta _{j-1}+T)\wedge \tau ], \ X(t) \in B \mid \mathcal F _{\eta _j }\big) \\
 &\le \mathbb P \big( \exists t\in [\eta _j+p^{1-3\epsilon }r^2, (\eta _{j}+T)\wedge \tau ], \ X(t) \in B \mid \mathcal F _{\eta _j }\big)\\
 \le \mathbb P\big( \exists t\in & [p^{1-3\epsilon } r^{2},T], \  X'(t)\in B \mid \mathcal F _{\eta _j} \big) +\mathbb P \big(\tau _{\rm hit}(\eta _j) \le (\eta _j +T)\wedge \tau  \mid \mathcal F _{\eta _j} \big).
\end{split}
\end{equation*}
Note that $X'$ is independent of $\mathcal F _{\rm \eta _j}$ except for its starting point and starting direction, and therefore we may use Lemma~\ref{claim:box} to bound the first probability by $1/4$. The second probability is bounded by $1/4$ by Proposition~\ref{prp:dont-hit}. This completes the proof of  \eqref{eq:3}.
\end{proof}

\subsection{There are many relaxed times}
The fundamental property of $\Ss$ that we use is that, as long as you avoid discovering mirrors in $\Ss$, you remain relaxed.
\begin{lemma}
\label{lem:ST-to-R}
Suppose that $t_1\in\mcal{T}$ is a time at which the walk changes direction, meaning that  $V(t_1-1)\not=V(t_1)$.
If $\mcal{T} \cap [t_1,t_2]\cap \Ss =\emptyset$ and $t_1<t_2<\tau_{\rm few}\wedge\tau_{\rm hea}$, then $[t_1,t_2] \subset \mcal{R}$.
\end{lemma}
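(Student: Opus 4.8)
\textbf{Proof plan for Lemma~\ref{lem:ST-to-R}.}
The plan is to fix a time $t\in[t_1,t_2]$ and verify the two defining conditions of a relaxed time. Condition (1), that no block $B(X(t),r)$ with $r\ge p^{-1}$ is heavy at time $t$, is immediate: since $t\le t_2<\tau_{\rm hea}$, by the very definition of $\tau_{\rm hea}$ in \eqref{eq:tau-hea-def} there are no heavy blocks at any time before $\tau_{\rm hea}$, so in particular none at time $t$. The real content is condition (2): we must show that $X(t)+sV(t)\notin\mathcal M(t)$ for all $1\le s\le t_*$. So I would argue by contradiction, assuming there is some $t\in[t_1,t_2]$, some $1\le s\le t_*$, and some mirror $y\in\mathcal M(t)$ with $y=X(t)+sV(t)$, and I would aim to produce an element of $\mathcal T\cap[t_1,t_2]\cap\Ss$, contradicting the hypothesis.

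The key is to trace back to the last time the walk changed direction before (or at) time $t$. Let $\sigma:=\max\{r\le t: r=t_1 \text{ or } V(r)\ne V(r-1)\}$; this is well-defined since $t_1\le t$ and $V(t_1-1)\ne V(t_1)$, so $\sigma\in[t_1,t]$. Between $\sigma$ and $t$ the walk moves in a straight line with constant velocity $V(t)$, so $X(\sigma)=X(t)-(t-\sigma)V(t)$ and hence $y=X(\sigma)+(s+t-\sigma)V(t)$. Here $s+t-\sigma\le t_*+(t-\sigma)$; and since $t\le t_2<\tau_{\rm few}$, the walk cannot have gone straight for longer than $t_*$, so $t-\sigma\le t_*$ and therefore $s+t-\sigma\le 2t_*$. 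Now I want to claim $\sigma\in\Ss$: by \eqref{eq:S}, this requires a mirror $y'\in\mathcal M(\sigma-1)$ and a direction $u\ne\pm V(\sigma-1)$ with $y'+ru=X(\sigma)$ for some $r\in[0,2t_*]$. The natural candidate is $y'=y$ (with $u=-V(t)$, $r=s+t-\sigma\in[0,2t_*]$); this works provided (i) $y\in\mathcal M(\sigma-1)$ — i.e.\ the mirror was already discovered strictly before time $\sigma$, not just by time $t$ — and (ii) $-V(t)\ne\pm V(\sigma-1)$, i.e.\ $V(\sigma-1)\ne\pm V(\sigma)$.

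For (ii): by definition of $\sigma$, either $\sigma=t_1$, in which case $V(\sigma-1)=V(t_1-1)\ne V(t_1)=V(\sigma)$ by hypothesis (and $V(\sigma-1)\ne-V(\sigma)$ automatically since the mirror walk never backtracks), or $\sigma>t_1$ and $V(\sigma-1)\ne V(\sigma)$ by the choice of $\sigma$; either way $V(\sigma-1)\ne\pm V(\sigma)$, so (ii) holds. For (i): we know $y\in\mathcal M(t)$. If in fact $y\notin\mathcal M(\sigma-1)$, then $y$ was discovered at some time $s'\in\mathcal T\cap[\alpha(t),t]$ with $s'\ge\sigma$; but then I would apply the same straightening argument at the time $\sigma'$ of the last direction change before $s'$, or more directly observe that the discovery time $s'$ itself lies in $\mathcal T\cap[t_1,t_2]$, and analyze whether $s'\in\Ss$: when a new mirror is discovered at $s'$ at a location $y=X(s')$ that the straight ray from $\sigma$ was headed toward, then in fact $X(s')$ lies on the ray $\{X(\sigma)+rV(\sigma):r\ge0\}$, and a symmetric reading of \eqref{eq:S} (with the roles reversed: the "old mirror" being at $X(\sigma)$ — or rather the last mirror before $s'$, which is in $\mathcal M(s'-1)$ since $\sigma$ is a turn — and the direction being $V(\sigma)=V(s'-1)$... ) needs care. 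The cleanest route: among all $t\in[t_1,t_2]$ violating condition (2), pick one, then among all witnessing mirrors $y$, note $y$ was discovered at some $s'\in\mathcal T$; if $s'<\sigma$ we are done by the argument above, and if $s'\ge\sigma$ then at time $s'$ the walk turns onto (or continues along) the ray and one checks $s'\in\mathcal S\cap\mathcal T\cap[t_1,t_2]$ directly, which contradicts the hypothesis either way.

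\textbf{Main obstacle.} The delicate point is exactly condition (i) above — reconciling "$y\in\mathcal M(t)$" with the "$\mathcal M(s-1)$" appearing in the definition of $\Ss$, i.e.\ making sure the potentially offending mirror was discovered \emph{before} the relevant time rather than simultaneously, and handling the case where it was discovered in between $\sigma$ and $t$. This requires carefully choosing which time in $[t_1,t_2]$ to test for membership in $\Ss$ (it may be $\sigma$, or it may be the discovery time $s'$ of the mirror $y$), and checking the non-backtracking direction constraint $u\ne\pm V(s-1)$ in each sub-case. The bound $r\le 2t_*$ and the direction constraint both hinge on $t_2<\tau_{\rm few}$ (straight runs are $\le t_*$) and on the structure of how the walk places mirrors; these should go through by the same style of argument used in Step~1 of the proof of Proposition~\ref{prp:dont-hit}, which is the closest template.
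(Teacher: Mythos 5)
Your plan is on the right track — it identifies the same pivot time $\sigma$ (the paper calls it $s^-$), the same goal of producing a point of $\mathcal T\cap[t_1,t_2]\cap\mathcal S$, and the same geometric observation that $y$ sits on the ray through $X(\sigma)$ in direction $V(\sigma)$ at distance $\le 2t_*$. But there is a genuine gap: you establish (modulo condition (i)) that $\sigma\in\mathcal S$, yet what the hypothesis forbids is a time in $\mathcal T\cap\mathcal S$. Nowhere in your argument do you show $\sigma\in\mathcal T$. Without that, exhibiting $\sigma\in\mathcal S\cap[t_1,t_2]$ contradicts nothing, and your fallback "among all witnessing mirrors $y$, note $y$ was discovered at some $s'\in\mathcal T$" does not help either, because the discovery time $s'$ of $y$ satisfies $s'<\sigma$ (since $y$ is strictly ahead of the straight segment $\{X(\sigma),\dots,X(t)\}$, the walk is never at $y$ on $[\sigma,t]$), so $s'$ may well lie before $t_1$ and therefore outside the forbidden window $[t_1,t_2]$. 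In short: your second branch $s'\ge\sigma$ is vacuous, your first branch $s'<\sigma$ is the only one, and it does not yield an element of $\mathcal T$ unless you can show $\sigma\in\mathcal T$.

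The missing idea is \emph{minimality}. The paper picks $s:=\min\big([t_1,t_2]\setminus\mathcal R\big)$ rather than an arbitrary offending time, and this is exactly what produces $s^-\in\mathcal T$: if $s^->t_1$ and $s^-\notin\mathcal T$, then the turn at $s^-$ is not a freshly discovered mirror, so $X(s^-)$ must already lie in $\mathcal M(s^- -1)$; but then $X(s^- -1)+1\cdot V(s^- -1)=X(s^-)\in\mathcal M(s^- -1)$ shows $s^- -1$ is not locally relaxed, and since $s^- -1\ge t_1$ and $s^- -1<s$ this contradicts the minimality of $s$. (The case $s^-=t_1$ is covered by the hypothesis $t_1\in\mathcal T$.) Your phrasing "among all $t\in[t_1,t_2]$ violating condition (2), pick one" discards precisely this tool, which is why you cannot close the argument. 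As a secondary remark, your worry (i) about $y\in\mathcal M(\sigma-1)$ versus $y\in\mathcal M(t)$ is real but resolves cleanly by the same geometric observation above: $y$ is never visited on $[\sigma,t]$, so its discovery time predates $\sigma$; no further casework is needed.
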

\begin{proof}
Suppose by contradiction that $t_1\le s \le t_2$ is the first time after $t_1$
that is not relaxed, that is
\[
s = \min [t_1,t_2] \setminus \mcal{R}.
\]
Since $s \leq \tau_{\rm hea}$, it must be that $s$ is not locally relaxed and therefore there exists $0<r\leq t_*$ such that
\begin{equation}\label{eq:baluga}
X(s)+rV(s)\in \mcal{M}(s).
\end{equation}
Let $t_1\leq s^-\leq s$ be the last time that the walk changed direction before $s$, which exists because $t$ is such a time.

We claim that $s^-\in \mathcal T$. Indeed, it holds by definition of $t_1$ if $s^-=t_1$.  On the other hand, if $s^->t_1$ and $s^-\notin \mathcal T$ then $X(s^-)$ is a previously discovered mirror but then $s^--1\notin \mathcal{R}$ contradicting the minimality of $s$. Thus, $s^-\in \mathcal T$.  But $s\leq \tau_{\rm few}$, so $|s-s^-|\leq t_*$.  By \eqref{eq:baluga} and the definition of $s^-$ we have $X(s^-)+V(s^-) (s-s^-+r) \in \mathcal M(s^-)$ and $s-s^-+r\le t_*+r\le 2t_*$ and so $s^-\in S$ contradicting the assumption that $\mcal{T} \cap [t_1,t_2]\cap \mathcal S = \emptyset$.  This completes the contradiction, and the proof is done.
\end{proof}

Finally, we prove Proposition~\ref{prp:relaxed}.

\begin{proof}[Proof of Proposition~\ref{prp:relaxed}]
Recall the definition of $\tau _{\rm rel}$ from \eqref{eq:tau-rel} and fix $t\ge t_*$. We would like to bound the probability that $\tau _{\rm rel}=t\le \tau $. Define the following sequence of stopping times. Let $\xi _0:=t-t_*$ and for all $j\ge 1$
\begin{equation*}
    \xi _{j+1} : =\min \big\{ s>\xi _j +2p^{-1}: s\notin \mathcal R \big\}.
\end{equation*}
On the event $\tau \ge t$, the set $[t-t_*,t]\setminus \mathcal R$ is contained in the union of intervals $[\xi _j,\xi _j +2p^{-1}]$ and therefore on the event that $\tau \ge t$ and $|\{j\ge 0: \xi _j < \tau \}| \le 0.05\log ^3(1/p)$ we have that $|[t-t_*,t]\setminus \mathcal R| \le 0.1t_*$ and therefore $\tau _{\rm rel}\neq t$. Thus, the statement of the proposition will follow from the estimate
\begin{equation*}
    \mathbb P \big( \tau \ge t \text{ and }  |\{j: \xi _j < t\wedge \tau  \}| \ge 0.05\log ^3(1/p) \big) \le e^{-c\log ^3(1/p)}.
\end{equation*}
and a union bound over $t\le T$. The last estimate easily follows from the conditional bound
\begin{equation}\label{eq:7}
    \mathbb P \big( \xi _{j+1}  < t\wedge \tau  \mid  \mathcal F _{\xi _j } \big) \le 4/5.
\end{equation}

Let $\eta _j:=\min \{s>\xi _j \mid s\in \mathcal T\}$ and recall that  $\tilde{W}$ is the driving walk of $W$. Define the events
\begin{equation*}
    \mathcal A _1:=\{\eta _j \le \xi _j+2p^{-1} \},  \quad \mathcal A _2 := \{\tilde{V}(\eta _j)\neq \pm V(\eta _j-1)\},  \quad \mathcal A _3: = \big\{ \mathcal T \cap S \cap (\xi _j, t\wedge \tau ] =\emptyset \big\},
\end{equation*}
and $\mathcal A :=\mathcal A _1 \cap \mathcal A _2 \cap \mathcal A _3$.

Next we show that on the event $\mcal{A}$, we have that $\xi_{j+1}> t\wedge \tau$.  We may assume that $\xi_j + 2p^{-1} < t\wedge \tau$ as otherwise the claim is trivial, so on the event $\mcal{A}_1$ this implies that $\eta_j\in(\xi_j,t\wedge \tau]$.
First we claim that $V(\eta_j-1)\not=V(\eta_j)$  on this event.  To see this, note that $\eta_j\in \mcal{T} \cap (\xi_j,t\wedge \tau]$ so that on the event $\mcal{A}_3$ we have that $\eta_j\not\in S$.
 Therefore, since $\eta_j<\tau \leq \tau_{\rm few}$, it follows that $X(\eta_j)\not\in \{X(s): s\in [\alpha(\eta_j),\eta_j)\}$ (as in~\eqref{eq:history-in-union}).
 Then, by the construction of the coupling and the fact that
 $\tilde{V}(\eta_j) \not= -V(\eta_j-1)$, it follows that $V(\eta_j)=\tilde{V}(\eta_j)\not= V(\eta_j-1)$, proving that $V(\eta_j)\not= V(\eta_j-1)$.
 Now we can apply Lemma~\ref{lem:ST-to-R} with $t_1=\eta_j < \xi_j+2p^{-1}$ and $t_2=t\wedge \tau$ to conclude that $[\xi_j+2p^{-1},t\wedge\tau]\subset\mcal{R}$ on the event $\mcal{A}$, and therefore $\xi_{j+1} > t\wedge \tau$.

What remains is to bound the probability of the event $\mcal{A}$ using a union bound.

The bound on $\mcal{A}_1$ is completely elementary:
\begin{equation}\label{eq:10}
\mathbb P (\mathcal A _1  \mid \mathcal F _{\xi _j} ) \ge 1- (1-p)^{2p^{-1}} \ge 1-e^{-2} \geq \frac67.
\end{equation}
Likewise, we have for $d\geq 4$
\[
\Prob(\mcal{A}_2 \mid  \mcal{F}_{\xi _j}) \geq \frac{2d-3}{2d-1}
\geq \frac57
\]

Finally to bound $\mcal{A}_3$ we use Lemma~\ref{lem:sparsity} to see that
\begin{align*}
|\Ss \cap (\xi _j, t\wedge \tau ]|
&\leq 2(|\mcal{M}(X(\xi _j)) \cap B_{3t_*}(X(\xi _j))| +
|\mcal{T}\cap [\xi _j,t\wedge\tau])^2 \\
&\le 2p^{-0.2}.
\end{align*}
The bound on $|B_{3t_*}(X(\xi _j))|$
comes from the fact that $\xi _j\leq \tau_{\rm hea}$,
and the bound on $\mcal{T}\cap[\xi _j,t\wedge\tau]$ comes from the fact that $\tau < \tau_{\rm many}$.

Exposing the walk from $\xi _j$ to $\tau$, each element of $S\cap (\xi _j, t\wedge\tau ]$ is in $\mathcal T $ with probability $p$ and therefore
\begin{equation}\label{eq:9}
    \mathbb P \big( \mathcal A _3 \mid \mathcal F _{\xi _j} \big) \ge 1-\lceil pt_* \rceil \sqrt{p} \ge 0.9.
\end{equation}
\end{proof}

\section{Concatenation of walks}\label{sec:conc}

Let $W_1^*$ and $W_2^*$ be two independent regenerated mirror walks of lengths $t_1,t_2\ge 0$, respectively, both starting from the origin from a uniform random direction. We define the concatenation $W_c=(X_c,V_c)$ of $W_1^*$ and $W_2^*$ by
\begin{equation}\label{eq:defofcon}
     X_c(s):=\begin{cases}
    \quad \quad  X_1^*(s) \quad \quad \quad \quad   \ \  &s\in [0,t_1]
\\
 X_1^*(t_1)+X_2^*(s-t_1) \quad  &s \in [t_1, t_1+t_2]
    \end{cases}.
\end{equation}
Our goal is to show that the concatenation $W_c$ can be coupled with the regenerated mirror walk $W$ of length $t_1+t_2$ starting from direction $e_1$.

Recall from Section~\ref{sec:driving} and Section~\ref{sec:regenerated} how a mirror walk and a regenerated walk are driven by a non-backtracking walk $\tilde{W}$. In the natural coupling of the concatenation $W_c$ and the regenerated walk $W$, we simply drive both of these processes using the same non-backtracking walk $\{\tilde{W}(s)\}_{s\ge 0}$. Namely, we drive the regenerated walk $\{W_1^*(s)\}_{s\in [0,t_1]}$ using $\{\tilde{W}(s)\}_{s\in [0,t_1]}$, the regenerated walk $\{W_2^*(s)\}_{s\in [0,t_2]}$ using $\{\tilde{W}(t_1+s)\}_{s\in [0,t_2]}$ and the regenerated walk $\{W(s)\}_{s\in [0,t_1+t_2]}$ using $\{\tilde{W}(s)\}_{s\in [0,t_1+t_2]}$. The next result shows that $X$ and $X_c$ will remain close with high probability.

\begin{proposition}
\label{prop:conc}
    Let $t_1,t_2\ge 0$ with $t_1+t_2\le T+1$ and suppose $\Hyp(T)$ holds for some $T\le \bar T$. Let $W_1^*,W_2^*$ be two independent regenerated mirror walks each with uniform random initial velocity. Let $W_c$ be the concatenation of $W_1^*$ and $W_2^*$ and let $W$ be a regenerated mirror walk of length $t_1+t_2$ with initial velocity $e_1$. Then, there is a coupling of $W_c$ and $W$ such that
    \begin{equation}\label{e.dor+2}
        \mathbb P \Big(  \max _{t\le t_1+t_2} \|X(t)-X_c(t)\| \ge p^{-1} \log^7 (1/p) \Big) \le e^{-c\log ^3(1/p)}.
    \end{equation}
The same result holds if $W$ is replaced by a regenerated mirror walk $W^*$ of length $t_1+t_2$ with uniform random initial velocity.
\end{proposition}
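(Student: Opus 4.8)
The plan is to analyze the natural coupling, in which all three walks are driven by a single non-backtracking walk $\tilde W$ with $\tilde V(0)=e_1$ (so that $W$ starts synchronized with $\tilde W$), with $W_1^*$ reading $\tilde W|_{[0,t_1]}$ and $W_2^*$ the time-shifted $\tilde W|_{[t_1,t_1+t_2]}$; one additionally couples the regeneration times of the three walks so that they restart from the origin at the same instants, which — since regenerations are rare by Corollary~\ref{cor:clo} — keeps the displacement error from accumulating across them (below I describe only a regeneration-free stretch). The proposition then reduces to the claim that, with probability $1-e^{-c\log^3(1/p)}$, each of $W$, $W_1^*$, $W_2^*$ stays within $p^{-1}\log^6(1/p)$ of the non-backtracking walk it reads, with the comparison anchored at the relevant start time — plus an extra $O(p^{-1}\log^3(1/p))$ for $W_1^*$ and $W_2^*$, accounting for the time before their first discovered mirror, during which their velocity (initially a uniform direction, not $\tilde V$) is corrected to $\tilde V$ by coupling rule~(4) and~\eqref{eq:prob of coupling}. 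Granting the claim, the triangle inequality and the identity $X_c(s)=X_1^*(t_1)+X_2^*(s-t_1)$ for $s\ge t_1$ finish the argument: each of $X(s)$, $X_1^*(t_1)$, $X_2^*(s-t_1)$ equals, up to the stated errors, an expression in $\tilde X(s)$ and $\tilde X(t_1)$, and the $\tilde X$ contributions cancel, leaving $\|X(s)-X_c(s)\|\le p^{-1}\log^7(1/p)$ for $p$ small; the $W^*$ version is identical.

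To prove the claim for one walk $W$ against the non-backtracking walk $\tilde W$ it reads, note that $X(s)-\tilde X(s)$ is constant on any interval where $V(s)=\tilde V(s)$, so it changes only during \emph{desynchronization periods}, i.e. maximal intervals on which $V\ne\tilde V$. By the argument of Remark~\ref{rem:trivial-time}, such a period can begin only when the walk interacts with its past (coupling rule~(1) fires), and by~\eqref{eq:prob of coupling} it ends at the first subsequent discovered mirror that does not force a backtrack. On the event $\tau>t_1+t_2$ — of probability $1-e^{-c\log^3(1/p)}$ by Corollary~\ref{cor:taurel} — we have $\tau_{\rm few},\tau_{\rm many}>t_1+t_2$, so every gap between consecutive discovered mirrors is at most $t_*$; and since each discovered mirror encountered while desynchronized resynchronizes the walk with probability $\ge(2d-2)/(2d-1)$ by~\eqref{eq:prob of coupling}, a routine computation shows that, provided the walk interacts with its past at most $\log^3(1/p)$ times before $t_1+t_2$, the total length of all desynchronization periods is $O(p^{-1}\log^6(1/p))$ except on an event of probability $e^{-c\log^3(1/p)}$. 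As $\|X(s)-\tilde X(s)\|$ changes by at most twice the total desynchronization length, the claim reduces to the bound on the number of interactions.

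Bounding the number of interactions with the past is the main obstacle, and it is obtained by the amplification scheme of the proof of Proposition~\ref{prp:noheavy}. Interactions with the last $2t_*$ of the trajectory are controlled via the density of relaxed times ($\tau<\tau_{\rm rel}$) together with the mechanism of Lemma~\ref{lem:ST-to-R}: once relaxed, the walk stays relaxed until it discovers a mirror in $\mathcal{S}$, so it cannot bump into its recent history without first making such a discovery, an event of probability $p$ each time. For the remaining interactions one sets $\nu_0:=0$ and lets $\nu_{j+1}$ be the first time at which the walk interacts with its history strictly before $\nu_j$; let $\sigma_j\le\nu_{j+1}$ be the last relaxed time before $\nu_{j+1}$, so that $\sigma_j\ge\nu_{j+1}-t_*\ge\nu_j$ on $\{\tau>\nu_{j+1}\}$. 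Any such interaction occurring after $\sigma_j$ is a fortiori an interaction with $[\alpha(\sigma_j),\sigma_j]$, whence $\{\nu_{j+1}\le(\nu_j+T)\wedge\tau\}\subseteq\{\tau_{\rm hit}(\sigma_j)\le(\sigma_j+T)\wedge\tau\}$, an event of conditional probability at most $1/4$ given $\mathcal F_{\sigma_j}$ by Proposition~\ref{prp:dont-hit}; since $\mathcal F_{\nu_j}\subseteq\mathcal F_{\sigma_j}$, iterating gives $\P(\nu_{j_*}\le T\wedge\tau)\le 4^{-j_*}=e^{-c\log^3(1/p)}$ with $j_*=\lceil\log^3(1/p)\rceil$, and with Corollary~\ref{cor:taurel} this yields the desired bound. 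Finally, the artificial reset of $W_c$ at time $t_1$ is absorbed by choosing a relaxed time $\sigma\in[t_1,t_1+t_*]$ of $W$ (which exists with probability $1-e^{-c\log^3(1/p)}$ since $\tau<\tau_{\rm rel}$) and using Claim~\ref{claim:cou} and Proposition~\ref{prp:dont-hit} to couple $W$ after $\sigma$ with a fresh regenerated walk, moving the anchor of the second comparison from $t_1$ to $\sigma$ at the negligible cost of $t_*\le p^{-1}\log^3(1/p)$.
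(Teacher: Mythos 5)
The main difficulty with your proposal is not a small technical gap but a structural one: you reduce the statement to the claim that, with high probability, \emph{each} of $W$, $W_1^*$, $W_2^*$ stays within $p^{-1}\log^6(1/p)$ of the driving walk $\tilde W$ it reads. That reduction is natural but the underlying claim is not a consequence of the machinery developed in the paper, and it is in fact essentially hopeless for $T$ much larger than $p^{-3/2}$. The reason is that $W$ desynchronizes from $\tilde W$ not only at ``interactions with the distant past'' (the events your $\nu_j$-scheme and Proposition~\ref{prp:dont-hit} control) but also at \emph{every} revisit $X(t)=X(s)$ with $s$ in the \emph{recent} past and $s\in\mcal T$ or $t\in\mcal T$: rule~(1) then forces $m(t)=m(s)$ whereas $\tilde W$ draws a fresh $\tilde m(t)$, so $V(t)\not=\tilde V(t)$ even if the walk was perfectly synchronized at $t-1$. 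These recent-past self-intersections are numerous over long time horizons and are invisible to $\tau_{\rm hit}(t)$, which by definition \eqref{eq:def_tau_hit} only monitors collisions with $\{X(s): s\in[\alpha(t'),t]\}$, i.e.\ with the history \emph{up to the anchor} $t$. Consequently, bounding the number of distant-past interactions by $\log^3(1/p)$ does not bound $\max_s\|X(s)-\tilde X(s)\|$, and the ``routine computation'' you invoke for the total desynchronization length has no foundation.

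The paper's proof circumvents this exactly by \emph{not} passing through $\tilde W$. Its stopping times $\zeta_i,\zeta_i'$ track the divergence of $V$ from $V_c$ directly, and the key (implicit) observation is that once $W$ and $W_c$ are simultaneously relaxed and both resynchronize with $\tilde V$ at the next $\mcal T$-time $\zeta_i^+$, then, as long as neither hits its pre-$\zeta_i^+$ history (which Proposition~\ref{prp:dont-hit} and Claim~\ref{claim:cou} give with probability $\ge 1/2$), they agree with two \emph{fresh} driven walks read off the same $\tilde W(\zeta_i^+ +\cdot)$ with identical starting velocity, and hence have identical velocity sequences forever after -- they interact with their respective recent pasts \emph{in lockstep}, inherit the same mirrors, and may very well drift far from $\tilde W$, but they do so \emph{together}. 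This is the mechanism that keeps $\|X-X_c\|$ of order $p^{-1}\log^6(1/p)$; it simply has no analogue in a one-sided comparison to $\tilde W$.

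Two secondary problems in your argument, which would matter even after fixing the above: (i) your $\sigma_j$ (last relaxed time before $\nu_{j+1}$) is not a stopping time, since it is defined through the future event $\nu_{j+1}$, so conditioning on $\mathcal F_{\sigma_j}$ to invoke Proposition~\ref{prp:dont-hit} is not legitimate as written -- one should condition at the \emph{first} relaxed time after $\nu_j$, as the paper does with $\zeta_i$; and (ii) the inequality $\sigma_j\ge\nu_{j+1}-t_*\ge\nu_j$ is asserted but not justified, and it fails when $\nu_{j+1}-\nu_j<t_*$, which certainly can happen since nothing in the definition of $\nu_{j+1}$ enforces a minimum spacing -- the paper's $\zeta_i'$'s are separated by at least $2p^{-1}$ by construction precisely to avoid this. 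Both of these are fixable in isolation (pick the first relaxed time after $\nu_j$ and enforce spacing of order $t_*$), but the first issue above means the overall route through $\tilde W$ does not lead to a proof.
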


\begin{proof}
First, observe that by extending $W$ and $W_2$ if needed, we may assume that $t_1+t_2=T+1$.

In order to couple $W$ and $W_c$ we simply drive $W$ and $W_c$ using the same driving mirror walk $\tilde{W}$. Namely, the walk $W_1^*$ is driven by $\{\tilde{W}(s)\}_{s\le t_1}$, the walk $W_2^*$ is driven by $\{\tilde{W}(t_1+s)\}_{s\le t_2}$ and the walk $W$ is driven by $\{\tilde{W}(s)\}_{s\le t_1+t_2}$.  The distance between the two walks might grow when their velocities are different. But if $\tau_{\rm rel}$ (and its analogous stopping time for $W_c$) has not yet occurred, we can quickly find a time that is relaxed with respect to both $W$ and $W_c$.
Quickly after this time, there is a positive chance that the two walks will be coupled perfectly.  We therefore need Corollary~\ref{cor:taurel} as an input to ensure the existence of many relaxed times.

Recall that $\mathcal R$ is the set of relaxed times with respect to $W$ and let $\mathcal R_1\subseteq [0,t_1]$ and $\mathcal R _2\subseteq [0,t_2]$ be the sets of relaxed times with respect to  $W_1^*$ and $W_2^*$ respectively. Let $\mathcal R_c :=\mathcal R _1 \cup (t_1+\mathcal R _2)$. Next, we define a sequence of stopping times $\zeta _1,\zeta _2 \dots $ and $\zeta _0', \zeta _1', \zeta _2 '\dots $ in the following way. We let $\zeta _0':=0$ and for $i\ge 1$ we let
\begin{equation*}
    \zeta _i:=T\wedge \min \big\{ t>\zeta _{i-1}' : t\in \mathcal R \cap \mathcal R _c   \big\} \quad \text{and} \quad \zeta _i': = T\wedge \min \big\{ t>\zeta _i+2p^{-1} : V(t)\neq V_c(t) \big\}.
\end{equation*}
For any $s\in (\zeta _i+2p^{-1}, \zeta _i')$ we have that $V(s)=\tilde{V}(s)$.  Therefore any $s\leq T$ for which $V(s)\not= \tilde{V}(s)$ is contained in an interval of the form $[\zeta_{i-1},(\zeta_i+2p^{-1})\wedge T]$, so
\begin{equation}\label{eq:33}
\begin{split}
    \max _{t\le T}\|X(t)-X_c(t)\|&\le \sum _{t<T} \|V(t)-V_c(t)\| \\
    &\le 2\sum _{i=1}^{\infty } \mathds 1 \{\zeta _{i-1}'< T\} \cdot \big( (\zeta _i+2p^{-1})\wedge T-\zeta _{i-1}' + 1\big)
\end{split}
\end{equation}
In the remainder of the proof we bound the number of non-zero terms in the sum and then bound the size of each of them.

\medskip

\noindent \textit{Bound on number of nonzero terms:}\\
Let $N:= |\{ i\ge 0 : \zeta _i'<T \}|$ be the number of nonzero terms in~\eqref{eq:33}. We will show that
\begin{equation}\label{eq:Nbd}
\Prob \big( N> \log^3(1/p) \big) \leq e^{-c\log^3(1/p)}.
\end{equation}
To this end, recall the definition of $\tau $ from \eqref{eq:deftau} and let $\tau _1$ and $\tau _2$ be the analogous stopping times for the walks $W_1^*$ and $W_2^*$ respectively. Let $\tau _c:=\tau _1 \wedge (t_1+\tau _2)$ and observe that by Corollary~\ref{cor:taurel} applied to the walks $W,W_1^*$ and $W_2^*$ we have that
\begin{equation}\label{eq:tau,tau_c}
 \mathbb P (\tau \wedge \tau _c\le T) \le e^{-c\log ^3(1/p)}.
\end{equation}

Next, we claim that for all $i\ge 1$, on the event  $\{\zeta _i \le t_1\}$ we have
\begin{equation}\label{eq:12}
   \mathbb P \big( \zeta_{i}'\geq t_1 \wedge \tau \wedge \tau _c \mid \mathcal F _{\zeta _i} \big)\ge 0.1
\end{equation}
and on the event $\{t_1\le \zeta _i \le  T\}$ we have
\begin{equation}\label{eq:13}
    \mathbb P \big( \zeta_{i}'\geq T\wedge \tau \wedge \tau _c \mid \mathcal F _{\zeta _i} \big)\ge 0.1.
\end{equation}
We prove \eqref{eq:13} while the proof of \eqref{eq:12} is identical and is omitted. Recall the definition of $\tau _{\text{hit}}(t)$ in \eqref{eq:def_tau_hit} for the walk $W$, and let $\tau _{\rm hit}^{2}(t)$ be the analogous stopping time for the walk $W_2^*$. For $t\ge t_1$, we let $\tau _{\rm hit}^c(t):=t_1+\tau _{\rm hit}^2(t-t_1)$ be the first time after $t$ in which $W_c$ interacts with its relevant past before $t$. Let $\zeta _i^+:=\min \{t>\zeta _i :t \in \mathcal T \}$ and observe that by the second part of Proposition~\ref{prp:dont-hit} applied to both $W$ and $W_2^*$ we have that
\begin{equation}\label{eq:71}
    \mathbb P \big( \tau _{\rm hit}(\zeta _i^+) \wedge \tau _{\rm hit} ^c (\zeta _i^+) \ge T\wedge \tau \wedge \tau _c \mid \mathcal F _{\zeta _i}  \big) \ge 1/2.
\end{equation}
Moreover, by part (2) of the definition of a relaxed time, if $\zeta _i^+\le \zeta _i+2p^{-1}$, then both $W$ and $W_c$ will encounter a new mirror at time $\zeta _i^+$ and by \eqref{eq:prob of coupling}, each will have a chance of at least $(2d-2)/(2d-1)$ to be coupled with the direction $\tilde{V}(\zeta _i^+)$ of the driving process. We obtain
\begin{equation}\label{eq:81}
\begin{split}
    \mathbb P \big( \zeta _i^+\le \zeta _i +p^{-1} , \ V(\zeta _i^+)=V_c(\zeta _i^+) \mid \mathcal F _{\zeta _i}  \big) &\ge \frac{2d-3}{2d-1}  \cdot \mathbb P \big( \zeta _i^+\le \zeta _i +2p^{-1}  \mid \mathcal F _{\zeta _i}  \big)\\
    &\ge \frac{5}{7} \cdot \big( 1-(1-p)^{\lfloor 2p^{-1} \rfloor } \big) \ge 0.6,
\end{split}
\end{equation}
where in the second inequality we used that $d\ge 4$ and in the last inequality we used that $(1-p)^{2p^{-1}}\approx e^{-2}$ for small $p$. On the intersection of the events in \eqref{eq:71} and \eqref{eq:81} we have that $\zeta _i'\ge T\wedge \tau \wedge \tau _c$ which completes the proof of \eqref{eq:13}. Using \eqref{eq:13} and letting $i_1:= \lfloor \log ^3(1/p) \rfloor $ and $i_2:=2\lfloor \log ^3(1/p) \rfloor$ we obtain
\begin{equation*}
    \mathbb P \big( \forall i \in [i_1,i_2], \  t_1 \le \zeta _i' <T\wedge \tau \wedge \tau _c  \big) \le \prod _{i\in (i_1,i_2]} \mathbb P \big( \zeta _i' <T \wedge \tau \wedge \tau _c \mid t_1 \le \zeta _i <T \big)\le e^{-c\log ^3(1/p)}.
\end{equation*}
Similarly, using \eqref{eq:12} we obtain
\begin{equation*}
    \mathbb P \big( \forall i \in [1,i_1], \  \zeta _i' <t_1\wedge \tau \wedge \tau _c  \big) \le \prod _{i\in [1,i_1]} \mathbb P \big( \zeta _i' < t_1 \wedge \tau \wedge \tau _c \mid  \zeta _i <t_1 \big)\le e^{-c\log ^3(1/p)}.
\end{equation*}
Finally, we observe that if $N\ge 2\log ^3(1/p)$ then either $\tau \wedge \tau _c \le T$ or one of the two events above holds. This finishes the proof of~\eqref{eq:Nbd} using \eqref{eq:tau,tau_c}.

\medskip

\noindent \textit{Bound on the size of the terms:}\\
We claim that on the event $\tau \wedge \tau _c >T$ each one of the terms on the right hand side of~\eqref{eq:33} is bounded by $2t_*$. Indeed, recall the definition of $\tau _{\rm rel} \le \tau $ in \eqref{eq:tau-rel} for the regenerated walk $W$, and let $\tau _{\rm rel}^1\le \tau _1$ and $\tau _{\rm rel}^2 \le \tau _2$ be the analogous stopping times for the regenerated walks $W_1^*$ and $W_2^*$ respectively. On the event $\tau \wedge \tau _c >T$, for all $t\le T$ we have that $|\mathcal R \cap [t-t_*,t]| \ge 0.9 t_*$ and $|\mathcal R_c \cap [t-t_*,t]| \ge 0.8 t_*$. It follows that on this event, if $\zeta _{i-1} \le T-t_*$ then $\zeta _i -\zeta _{i-1} \le t^*$, and therefore each one of the terms on the right hand side of~\eqref{eq:33} is bounded by $2t_*$.

Substituting the bounds in \eqref{eq:33}, we obtain that on the event $\{N\le 2 \log ^3(1/p)\} \cap \{\tau \wedge \tau _c >T\}$ we have $\max _{t\le T}\|X(t)-X_c(t)\| \le  2Nt_* \le  Cp^{-1} \log ^6(1/p)$ and therefore by \eqref{eq:Nbd} and \eqref{eq:tau,tau_c} we have
\begin{equation*}
        \mathbb P \Big(  \max _{t\le T} \|X(t)-X_c(t)\| \ge \tfrac13 p^{-1} \log^7(1/p) \Big) \le e^{-c\log ^3(1/p)},
\end{equation*}
which yields \eqref{e.dor+2} since $\|X(T+1)-X(T)\|+\|X_c(T+1)-X_c(T)\|\le 2$.
\end{proof}

Proposition~\ref{prop:conc} directly yields the following by induction.
\begin{corollary}\label{cor:conca}
    Let $n\in \mathbb N$ and let $t_1,\dots t_n\ge 0$ with $t_1+\cdots +t_n\le T+1$ and $\Hyp(T)$ holds. Let $W_1^*,\dots ,W_n^*$ be $n$ independent regenerated mirror walks with uniform random initial velocity. Let $W_c$ be the concatenation of $W_1^*,\dots W_n^*$ and let $W$ be a regenerated mirror walk of length $t_1+\cdots +t_n$ with initial velocity $e_1$. Then, there is a coupling of $W_c$ and $W$ such that
    \begin{equation*}
        \mathbb P \Big(  \max _{t\le t_1+\dots+t_n} \|X(t)-X_c(t)\| \ge np^{-1} \log^7 (1/p) \Big) \le e^{-c\log ^3(1/p)}.
    \end{equation*}
\end{corollary}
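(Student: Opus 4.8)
The plan is a straightforward induction on $n$, with Proposition~\ref{prop:conc} as the engine. To make the induction close up I would prove the statement together with its variant in which $W$ is replaced by a regenerated mirror walk $W^*$ of length $t_1+\cdots+t_n$ with uniform random initial velocity (the ``same result'' clause of Proposition~\ref{prop:conc}). The base case $n\le 2$ is then contained in Proposition~\ref{prop:conc} itself: $n=2$ is the two statements of the proposition, and $n=1$ follows by taking $t_2=0$, so that $W_c$ is just $W_1^*$ and $X_c=X_1^*$, the bound $np^{-1}\log^7(1/p)$ with $n=1$ being~\eqref{e.dor+2}. Discarding indices with $t_i=0$, I may assume all $t_i\ge 1$, so that $n\le t_1+\cdots+t_n\le T+1\le \bar T+1$; this will matter for the final union bound.

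For the inductive step I would split off the last block: set $s_1:=t_1+\cdots+t_{n-1}$ and $s_2:=t_n$. First apply Proposition~\ref{prop:conc} with lengths $(s_1,s_2)$, taking its second walk to be the given $W_n^*$: this produces a coupling of $W$ (resp.\ $W^*$) with a concatenation $W'\sqcup W_n^*$, where $W'$ is an auxiliary regenerated walk of length $s_1$ with uniform initial velocity, independent of $W_n^*$, and $\max_t\|X(t)-X_{W'\sqcup W_n^*}(t)\| < p^{-1}\log^7(1/p)$ off an event of probability $\le e^{-c\log^3(1/p)}$. Then apply the induction hypothesis in its uniform-velocity form to couple $W'$ with the concatenation $W_c':=W_1^*\sqcup\cdots\sqcup W_{n-1}^*$ of the remaining given walks, with $\max_{t\le s_1}\|X_{W'}(t)-X_{c'}(t)\|\le (n-1)p^{-1}\log^7(1/p)$ off an event of probability $\le e^{-c\log^3(1/p)}$. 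Now glue these two couplings along the common marginal of $W'$ (the gluing lemma for couplings); because $W_n^*\perp W'$ under the first coupling and the gluing is performed over $W'$, the walks $W_1^*,\dots,W_n^*$ remain jointly independent with their prescribed marginals, so their concatenation is a legitimate instance of the $W_c$ in the statement.

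It then remains to compare and union-bound. On the good event of the second coupling, for $t\le s_1$ one has $\|X_{W'\sqcup W_n^*}(t)-X_c(t)\| = \|X_{W'}(t)-X_{c'}(t)\| \le (n-1)p^{-1}\log^7(1/p)$, while for $t\in[s_1,s_1+s_2]$ the two concatenations run the very same $W_n^*$-increments after time $s_1$, so the displacement between them is frozen at $\|X_{W'}(s_1)-X_{c'}(s_1)\| \le (n-1)p^{-1}\log^7(1/p)$; hence $\max_t\|X_{W'\sqcup W_n^*}(t)-X_c(t)\| \le (n-1)p^{-1}\log^7(1/p)$. Combined with the first coupling via the triangle inequality this gives $\max_t\|X(t)-X_c(t)\| \le np^{-1}\log^7(1/p)$ on the intersection of the two good events. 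Unrolling the recursion, the total failure probability is at most $n\,e^{-c\log^3(1/p)}\le (\bar T+1)e^{-c\log^3(1/p)}$, which is $\le e^{-c'\log^3(1/p)}$ for $p$ small because $\log^3(1/p)$ dominates $\log^2(1/p)=\log\bar T$.

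I expect the only mildly delicate point to be the coupling bookkeeping in the gluing step — ensuring that the glued probability space genuinely has $W_1^*,\dots,W_n^*$ mutually independent with their prescribed laws, so that their concatenation is exactly the object appearing in the statement. This is immediate from $W_n^*\perp W'$ together with the fact that the gluing is over the shared variable $W'$, and everything else is error bookkeeping plus a union bound — which is presumably why the paper states that the corollary follows ``directly'' from Proposition~\ref{prop:conc} by induction.
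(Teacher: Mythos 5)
Your proof is correct and supplies the induction that the paper leaves to the reader: the authors simply assert that Corollary~\ref{cor:conca} ``directly yields\ldots by induction'' from Proposition~\ref{prop:conc} without writing it out. The gluing-along-$W'$ step (with the observation that $W_n^*\perp W'$, combined with conditional independence given $W'$, propagates mutual independence of $W_1^*,\dots,W_n^*$ to the glued space), the reduction to $t_i\ge 1$ so that $n\le T+1\le \bar T+1$, and the harmless degradation of the constant $c$ in the union bound because $\log^3(1/p)\gg\log\bar T=\log^2(1/p)$ are exactly the bookkeeping the authors expect the reader to supply.
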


\begin{remark}\label{rem:conc}
Proposition~\ref{prop:conc} directly implies that
\begin{equation*}
        \mathbb P \Big(  \max _{t\le T+1} \|X(t)-X^*(t)\| \ge p^{-1} \log^7 (1/p) \Big) \le e^{-c\log ^3(1/p)}.
\end{equation*}
\end{remark}

\section{Completion of the proof: induction step}
\label{sec:induct}
Recall that by Remark~\ref{rem:trivial}, Assumption~$\Hypone{T}$ trivially holds when $T\le p^{-1-2\epsilon/d}$ while Assumption~$\Hyptwo{T}$ trivially holds for $T\le p^{-1} \log^{16} (1/p)$.

\medskip

Assume now that $\Hyp(T)$ holds, which implies the validity of the concatenation results of Section~\ref{sec:conc} up to time $T+1$, which we assume to be smaller than $e^{\log^2 (1/p)}$.
We start by showing a bound on the variance of $W^*$ (as well as \eqref{e.th:main-var}) based on Section~\ref{sec:KP} for smaller times and on Assumption~\hyperref[h2]{\bf{(H2)}} and  concatenation for larger times. We then argue using this variance bound, concatenation, and an anti-concentration argument
that Assumption~$\Hyptwo{T+1}$ holds. Last, by using~\hyperref[h2]{\bf{(H2)}}, a concentration argument, concatenation, and Assumption~$\Hypone{T}$, we prove Assumption~$\Hypone{T+1}$.

\subsection{Effective diffusion of the walk and proof of \eqref{e.th:main-var}}

The aim of this section is to provide a control on the variance of the regenerated walk $W^*$ with random uniform initial velocity, and prove \eqref{e.th:main-var}. Recall that by symmetry, the covariance is a scalar matrix, whose diagonal entries we denote by $\sigma_t^2$.
\begin{lemma}\label{lem:var} If $\Hyp(T)$ holds, we have:
\begin{itemize}
\item For all $p^{-1}\le t\le T$,
\begin{equation}\label{ea:varKP}
ct/p\le \sigma _t^2 \le Ct/p.
\end{equation}
\item In addition, if $ p^{-5/4} \le T$, we have for all $p^{-5/4} \le t\le T$
\begin{equation}\label{ea:varIt}
    \Big| \frac p t \sigma _t^2- \frac{2d-1}{d(d-1)} \Big| \le  Cp^{1/8}\log ^{15}(1/p).
\end{equation}
\end{itemize}
\end{lemma}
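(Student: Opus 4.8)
The strategy is a dyadic induction on the time scale. I would establish the precise asymptotics~\eqref{ea:varIt} first; the two-sided bound~\eqref{ea:varKP} then follows for $t\ge p^{-5/4}$ from it, and for $p^{-1}\le t\le p^{-5/4}$ from the short-time input. \emph{Short times:} for $p^{-1}\le t\le 2p^{-5/4}$ one reads off the claim from Corollary~\ref{cor:coup-for-variance} and Lemma~\ref{lem:driving}. Indeed, with $1-\rho=p\tfrac{2d-2}{2d-1}$ the explicit formula $\tilde\sigma_t^2=\tfrac{1+\rho}{d(1-\rho)}t+\tfrac{2\rho(1-\rho^t)}{d(1-\rho)^2}$ gives $ct/p\le\tilde\sigma_t^2\le Ct/p$ and $\big|\tfrac pt\tilde\sigma_t^2-\tfrac{2d-1}{d(d-1)}\big|\le C(p+\tfrac1{pt})\le Cp^{1/4}$ on that range, while Corollary~\ref{cor:coup-for-variance} gives $\tfrac pt|\sigma_t^2-\tilde\sigma_t^2|\le 2p^4t^3\log^6(1/p)\le Cp^{1/4}\log^6(1/p)$; hence $\big|\tfrac pt\sigma_t^2-\tfrac{2d-1}{d(d-1)}\big|\le Cp^{1/4}\log^6(1/p)$ and in particular $ct/p\le\sigma_t^2\le Ct/p$ there. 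The case $t\in[p^{-5/4},2p^{-5/4}]$ is the base of the induction.

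\emph{From concatenation to the variance.} The engine is the following transfer estimate, valid whenever $t=t_1+t_2\le T+1$ and $\sigma_{t_1}^2+\sigma_{t_2}^2\le Ct/p$:
\[
\big|\sigma_t^2-\sigma_{t_1}^2-\sigma_{t_2}^2\big|\;\le\;Cp^{-1}\log^7(1/p)\sqrt{t/p}.
\]
To prove it, couple the regenerated walk $W^*$ of length $t$ (uniform initial velocity) with the concatenation $W_c$ of two independent regenerated walks $W_1^*,W_2^*$ of lengths $t_1,t_2$, as in Proposition~\ref{prop:conc}, so that $\|X^*(t)-X_c(t)\|\le p^{-1}\log^7(1/p)$ off an event of probability $\le e^{-c\log^3(1/p)}$. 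By independence $\cov(X_c(t))=(\sigma_{t_1}^2+\sigma_{t_2}^2)\Id$, so the difference in question is $\E[X^*(t)_1^2-X_c(t)_1^2]$, which we factor as $\E[(X^*(t)_1-X_c(t)_1)(X^*(t)_1+X_c(t)_1)]$ and estimate by Cauchy--Schwarz. The first factor has second moment $\le p^{-2}\log^{14}(1/p)+4t^2e^{-c\log^3(1/p)}\le Cp^{-2}\log^{14}(1/p)$ (using the deterministic bound $\|X^*(t)\|,\|X_c(t)\|\le t$ and $t\le\bar T$); the second factor has second moment $\le 8(\sigma_{t_1}^2+\sigma_{t_2}^2)+2\,\E[(X^*(t)_1-X_c(t)_1)^2]\le Ct/p$ — the point being that this is bounded \emph{through} $X_c$ and the coupling error alone, so no a priori control of $\sigma_t^2$ is needed and the induction closes.

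\emph{Dyadic induction.} Starting from the base on $[p^{-5/4},2p^{-5/4}]$, I would show by induction on $k$ that~\eqref{ea:varIt} holds on $[p^{-5/4},2^kp^{-5/4}]$ with an error $\epsilon_k$ and that $ct/p\le\sigma_t^2\le Ct/p$ there. For $t\in[2^kp^{-5/4},2^{k+1}p^{-5/4}]$ write $t=\lceil t/2\rceil+\lfloor t/2\rfloor$; as $p^{-5/4}\ge 2p^{-1}$ for small $p$, both halves lie in the range covered by the inductive hypothesis. By it, $\sigma_{t_1}^2+\sigma_{t_2}^2=\tfrac{2d-1}{d(d-1)}\tfrac tp+O(\epsilon_k\,t/p)$ and $\sigma_{t_1}^2+\sigma_{t_2}^2\le Ct/p$, so the transfer estimate gives $\big|\tfrac pt\sigma_t^2-\tfrac{2d-1}{d(d-1)}\big|\le\epsilon_k+C\log^7(1/p)(pt)^{-1/2}\le\epsilon_k+Cp^{1/8}\log^7(1/p)$, the last step because $pt\ge p^{-1/4}$ when $t\ge p^{-5/4}$. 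Thus $\epsilon_{k+1}\le\epsilon_k+Cp^{1/8}\log^7(1/p)$; since reaching $\bar T=\lfloor e^{\log^2(1/p)}\rfloor$ takes only $K\le C\log^2(1/p)$ doublings, we end at $\epsilon_K\le Cp^{1/4}\log^6(1/p)+C\log^2(1/p)\cdot p^{1/8}\log^7(1/p)\le Cp^{1/8}\log^{15}(1/p)$, which is~\eqref{ea:varIt}; the two-sided bound~\eqref{ea:varKP} propagates in the same way, the constants drifting by a factor $1\pm Cp^{1/8}\log^7(1/p)$ at each of the $O(\log^2(1/p))$ steps and hence staying bounded away from $0$ and $\infty$.

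\emph{Main obstacle.} The delicate point is to use a \emph{dyadic} splitting, and not to cut $[0,t]$ into $\approx tp^{5/4}$ blocks of the fixed length $p^{-5/4}$ on which the short-time estimate is available: an $n$-fold concatenation only controls the displacement up to $np^{-1}\log^7(1/p)$ (Corollary~\ref{cor:conca}), which for $t$ near $\bar T$ is exponentially large in $\log^2(1/p)$ and would swamp the variance $\asymp t/p$. Keeping $n=2$ costs $O(\log^2(1/p))$ steps but only a relative error $p^{1/8}\log^7(1/p)$ per step — the balance $\sqrt{t/p}\cdot p^{-1}\log^7(1/p)\le(t/p)\cdot p^{1/8}\log^7(1/p)$ holds precisely when $t\ge p^{-5/4}$ — which is exactly small enough to survive the accumulation.
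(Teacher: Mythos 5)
Your proposal is correct and follows essentially the same route as the paper: an exact Kesten--Papanicolaou base at $t\lesssim p^{-5/4}$, a one-step transfer estimate obtained from Proposition~\ref{prop:conc} plus Cauchy--Schwarz, and a dyadic iteration in time. The only genuine (but minor) variation is in how the second factor in Cauchy--Schwarz is controlled: the paper bounds $\sigma_{2t}+\sigma_t$ directly from Assumption~$\Hyptwo{T}$ (picking up an extra $\log^8(1/p)$), whereas you observe that $\E[(X^*+X_c)_1^2]\le 2\E[(X^*-X_c)_1^2]+8\E[(X_c)_1^2]$ can be bounded through $X_c$ and the coupling error alone, i.e.\ from the inductive hypothesis $\sigma_{t_1}^2+\sigma_{t_2}^2\le Ct/p$, so no a priori bound on $\sigma_t$ is needed. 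Correspondingly you iterate upward by doubling rather than downward by halving; both are equivalent, since in both versions the per-scale error $\log^7(1/p)(pt)^{-1/2}$ is geometric in the scale and is dominated near the base scale $p^{-5/4}$, giving $p^{1/8}$ up to logarithms. Your ``main obstacle'' remark — that an $n$-fold flat concatenation at scale $p^{-5/4}$ would accumulate an error $np^{-1}\log^7(1/p)$ that overwhelms the $O(t/p)$ variance for large $t$, whereas dyadic doubling balances exactly at $t\ge p^{-5/4}$ — correctly identifies why the iteration must be dyadic and why the threshold $p^{-5/4}$ appears.
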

\begin{proof}
 For $p^{-1}\le t\ll p^{-4/3}$, \eqref{ea:varKP} is a short-time estimate that holds by the Kesten-Papanicolaou argument in form of Lemma~\ref{lem:driving} and Corollary~\ref{cor:coup-for-variance}.
In the range $p^{-5/4}\le t\le T$, \eqref{ea:varKP} holds as a consequence of \eqref{ea:varIt} which we presently prove. Assume that $T\ge p^{-5/4}$.

\medskip

We start with a preliminary estimate.
Let $p^{-5/4}/3 \le t\le T/2$ and let $W_1^*$ and $W_2^*$ be two independent regenerated walks of length $t$ with a uniform random initial velocity and let $W_c$ be the concatenation of $W_1^*$ and $W_2^*$. By Proposition~\ref{prop:conc}, there is a coupling such that
\begin{equation*}
        \mathbb P \Big(  \|X^*(2t)-X_c(2t)\| \ge p^{-1} \log^7 (1/p) \Big) \le e^{-c\log ^3(1/p)}.
\end{equation*}
Thus, using that $\sigma _{2t}^2=\mathbb E [X^*(2t)_1^2]$ and $2\sigma _{t}^2=\mathbb E [X_c(2t)_1^2]$ (recall the walks are centered and symmetric by reflection) we obtain by Cauchy-Schwarz' inequality  and the triangle inequality in $L^2(\Omega )$
\begin{equation}\label{eq:t2t}
\begin{split}
    |\sigma _{2t}^2-2\sigma _t^2 | =|\E \big[ X^*(2t)_1^2-X_c(2t)_1^2\big] |= |\E \big[ \big( X^*(2t)_1-X_c(2t)_1 \big) \big( X^*(2t)_1+X_c(2t)_1 \big) \big] |\\
    \le  \sqrt{\E \big[ \big( X^*(2t)_1-X_c(2t)_1 \big)^2 \big]} (\sigma _{2t}+\sqrt 2 \sigma _t)
    \le \sqrt{t} \cdot p^{-3/2} \log ^{15}(1/p),
\end{split}
\end{equation}
where in the last inequality we used that
\begin{equation}\label{e.bd-var0}
\mathbb E[\|X^*(t')\|^2]\le C (t'/p) \log ^{16}(1/p)
\end{equation}
for $t'=t,2t$ to bound $\sigma_{2t}+\sigma_{t}$.
Indeed, by symmetry and a union bound on all possible initial velocities for $X^*$,
Assumption~$\Hyptwo{T}$ on $X(t')$ yields
\begin{equation}\label{e.H2*}
\mathbb P \Big(\|X^*(t')\|\ge \sqrt{t'/p}\log^8(1/p)\Big)\le C e^{-\log^2 (1/p)},
\end{equation}
from which \eqref{e.bd-var0} follows.

Next, we prove the desired claim by iteration.
Let $p^{-5/4} \le t\le T$ and define inductively $t_0=t$ and for $j\ge 1$
\begin{equation*}
    s_j:= 2\lfloor t_{j-1}/2 \rfloor  \quad \text{and} \quad t_j:= s_j/2.
\end{equation*}
In words, we define $s_j$ to be the even integer in $\{t_{j-1},t_{j-1}-1\}$ and then divide it by two in the definition $t_j$. Let $m$ be the first integer $j$ for which $t_j \le p^{-5/4}$.
By the triangle inequality,
\begin{equation}\label{a001}
\begin{split}
    \Big| \frac{\sigma _t ^2}{t}-\frac{\sigma _{t_m}^2}{t_m} \Big| &\le \sum _{j=1}^m \Big| \frac{\sigma _{t_{j-1}}^2}{t_{j-1}}- \frac{\sigma _{s_j}^2}{s_j}\Big|+\Big| \frac{\sigma _{s_j}^2}{s_j}- \frac{\sigma _{t_j}^2}{t_j}\Big|.
\end{split}
\end{equation}
For the first right-hand side term, since $|t_{j-1}-s_j|\le 1$ we have $|X^*(t_{j-1})_1-X^*(s_j)_1|\le 1$ so that by \eqref{e.bd-var0}
each summand satisfies
\[
\Big| \frac{\sigma _{t_{j-1}}^2}{t_{j-1}}- \frac{\sigma _{s_j}^2}{s_j}\Big|
\,\le\,\Big| \frac{\sigma _{t_{j-1}}^2 - \sigma _{s_j}^2}{t_{j-1}}\Big|+ \frac{\sigma _{s_j}^2}{t_{j-1} s_j}\,\le  \frac{\sigma _{t_{j-1}}+ \sigma _{s_j}}{t_{j-1}}+ \frac{\sigma _{s_j}^2}{t_{j-1} s_j}
\,\le\,  \frac {C\log ^8(1/p)}{ p^{1/2}\sqrt{s_j}} .
\]
For the second right-hand side term of \eqref{a001} we appeal to \eqref{eq:t2t}. Summing over $j$, we then obtain
\begin{equation}\label{a002}
\begin{split}
 p \Big| \frac{\sigma _t ^2}{t}-\frac{\sigma _{t_m}^2}{t_m} \Big| &\le Cp^{1/8} \log ^{15}(1/p).
\end{split}
\end{equation}
To conclude the proof of the statement, we use the triangle inequality in form of
\begin{equation*}
\begin{split}
    \Big| \frac{p}t\sigma _t ^2-\frac{2d-1}{d(d-1)} \Big| & \le \, p\Big| \frac{\sigma _t ^2}{t}-\frac{\sigma _{t_m}^2}{t_m} \Big|+p\Big| \frac{ \sigma _{t_m}^2}{t_m}-\frac{\tilde\sigma _{t_m}^2}{t_m} \Big|+\Big|\frac{p }{t_m}\tilde \sigma _{t_m}^2-\frac{2d-1}{d(d-1)} \Big|,
\end{split}
\end{equation*}
where $\tilde \sigma_t^2$ is the variance of the driving walk, and we appeal to \eqref{a002}, and to Lemma~\ref{lem:driving} and Corollary~\ref{cor:coup-for-variance} at time
$\frac13  p^{-5/4}\le t_m \le p^{-5/4} $ to control the right-hand side.
\end{proof}
The desired bound \eqref{e.th:main-var} follows from Lemma~\ref{lem:var}, the identity $\mathbb E[\|X^*(t)\|_2^2]=d\cdot \sigma_t^2$, and
\begin{equation*}
|\mathbb E[\|X^*(t)\|_2^2]-\mathbb E[\|X(t)\|_2^2]|\,\le\, \sqrt{t} \cdot p^{-3/2} \log ^{15}(1/p),
\end{equation*}
which, arguing as for \eqref{eq:t2t}, is a consequence of Remark~\ref{rem:conc}, Assumption~$\Hyptwo{T}$ on $X(t)$, and~\eqref{e.H2*}.

\subsection{Traveling far is unlikely: proof of Assumption~$\Hyptwo{T+1}$}

In this section we prove that Assumption~$\Hyptwo{T+1}$ holds.
The proof makes use of the following standard concentration result for a sum of independent variables (see, e.g., \cite[Theorem~3.4]{chung2006concentration}).
\begin{lemma}\label{lem:freedman}
    Let $n\ge 1$ be an integer and let $M>0$. Let $Z_1,\dots ,Z_n$ be independent random variables such that for all $i\le n$ we have $\mathbb E [Z_i]=0$ and $|Z_i|\le M$ almost surely. Letting $S=\sum _{i=1}^n Z_i$ we have for all $x>0$
    \begin{equation*}
        \mathbb P \big( |S|\ge x \big) \le 2\exp \Big(- \frac{x^2}{2\var (S)+Mx} \Big).
    \end{equation*}
\end{lemma}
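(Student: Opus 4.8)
The plan is to use the standard exponential-moment argument; indeed this is precisely the Bernstein-type inequality \cite[Theorem~3.4]{chung2006concentration}, so one option is to quote it directly. For completeness I would sketch the proof as follows.

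\textbf{Step 1 (reduction to a one-sided bound).} Since $-Z_1,\dots,-Z_n$ satisfy the same hypotheses as $Z_1,\dots,Z_n$ and $\var(-S)=\var(S)$, it is enough to prove $\P(S\ge x)\le \exp\bigl(-x^2/(2\var(S)+Mx)\bigr)$ for every $x>0$; applying this to $S$ and to $-S$ and summing produces the factor~$2$.

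\textbf{Step 2 (Chernoff bound and the moment generating function).} For $\lambda>0$, Markov's inequality for $e^{\lambda S}$ together with independence gives $\P(S\ge x)\le e^{-\lambda x}\prod_{i=1}^{n}\E[e^{\lambda Z_i}]$. Writing $\sigma_i^2:=\E[Z_i^2]$, one expands (dominated convergence being justified by $e^{\lambda|Z_i|}\le e^{\lambda M}$) and uses $\E[Z_i]=0$ together with $|\E[Z_i^k]|\le M^{k-2}\sigma_i^2$ for $k\ge 2$ to obtain
\[
\E[e^{\lambda Z_i}] = 1 + \sum_{k\ge 2}\frac{\lambda^k\E[Z_i^k]}{k!} \le 1 + \frac{\sigma_i^2}{M^2}\bigl(e^{\lambda M}-1-\lambda M\bigr) \le \exp\!\Bigl(\frac{\sigma_i^2}{M^2}\bigl(e^{\lambda M}-1-\lambda M\bigr)\Bigr),
\]
the last step using $1+u\le e^u$. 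Multiplying over $i$ and recalling $\var(S)=\sum_i\sigma_i^2$ by independence,
\[
\P(S\ge x) \le \exp\!\Bigl(-\lambda x + \frac{\var(S)}{M^2}\bigl(e^{\lambda M}-1-\lambda M\bigr)\Bigr).
\]

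\textbf{Step 3 (optimization over $\lambda$).} I would then use the elementary inequality $e^u-1-u\le \tfrac{u^2/2}{1-u/3}$ for $0\le u<3$ --- equivalently, choose $\lambda=\dfrac{x}{\var(S)+Mx/3}$ --- to bound the exponent by $-\dfrac{x^2}{2(\var(S)+Mx/3)}\le -\dfrac{x^2}{2\var(S)+Mx}$, which together with Step~1 finishes the proof. The only part requiring an actual (but routine) computation is this last one-variable estimate; I do not expect any genuine obstacle, since the lemma is a textbook Bernstein inequality that could simply be cited verbatim from \cite{chung2006concentration}.
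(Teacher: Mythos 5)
Your proof is correct and matches the intent of the paper, which states this lemma without proof as a direct citation of \cite[Theorem~3.4]{chung2006concentration}. The exponential-moment argument you reproduce (Chernoff bound, the $|\E[Z_i^k]|\le M^{k-2}\sigma_i^2$ truncation, the elementary bound $e^u-1-u\le \tfrac{u^2/2}{1-u/3}$, and the optimal choice $\lambda=x/(\var(S)+Mx/3)$) is precisely the standard proof of that cited Bernstein inequality, so there is nothing to compare beyond noting that you supplied the details the paper chose to outsource.
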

Now we prove~$\Hyptwo{T+1}$.
Fix $s\le t\le T+1$ with $t-s\ge p^{-1}$. Let $t':=\lfloor \sqrt{(t-s)/p} \rfloor $ and let $n:=\lfloor\sqrt{p(t-s)}\rfloor$. Let $W_1^*,\dots ,W_n^*$ be independent mirror walks starting from a uniform random direction and of lengths $s$ for $W_1^*$, $t'$ for $\{W_i^*\}_{i=2...n-1}$, and $t-s-(n-2)t'\in [t',2t']$ for $W_n^*$. Let $W_c$ be the concatenation of these walks (a walk of length $t$). Since $\Hyp(T)$ holds, by Corollary~\ref{cor:conca}, there is a coupling of $W$ and $W_c$ such that
\begin{equation}\label{a0}
\mathbb P \Big( \max _{s'\le t} \| X(s')-X_c(s')\| \ge n p^{-1} \log ^7(1/p) \Big) \le \exp (-c\log ^3(1/p)).
\end{equation}
Next, note that by Lemma~\ref{lem:var}
 we have that $\var(X_c(t)_i-X_c(s)_i) \le C(t-s)/p$ for any coordinate $i\le d$. Thus, by Lemma~\ref{lem:freedman} and a union bound over the coordinates we have
\begin{eqnarray*}
    \mathbb P \big( \|X_c(t)-X_c(s)\| \ge \sqrt{(t-s)/p} \log ^3(1/p) \big) &\le &2d\exp \Big(  \frac{-c(t-s)p^{-1} \log ^6(1/p)}{ C(t-s)/p +t' \sqrt{(t-s)/p} \log ^3(1/p) }\Big) \\
    &\le& \exp (-c\log ^3(1/p)).
\end{eqnarray*}
In combination with \eqref{a0} it entails
\begin{equation*}
    \mathbb P \big( \|X(t)-X(s)\| \ge 3\sqrt{(t-s)/p} \log ^7(1/p) \big)\le  \exp (-c\log ^3(1/p)).
\end{equation*}
The last estimate clearly holds also when $t-s\le p^{-1}$ and therefore    $\Hyptwo{T+1}$ holds.

\subsection{Transition probabilities: proof of Assumption~$\Hypone{T+1}$}

In this section we prove that $\Hypone{T+1}$ holds.
To this aim, we need the following anti-concentration result, the proof of which we postpone to the appendix.
\begin{lemma}\label{lem:felipe}
Let $n\ge 1$ be an integer and let $M>1$. Let $Z_1,\dots ,Z_n$ be i.i.d.\ random variables on $\Real^d$ satisfying for all $j\le n$
\begin{equation*}
\Expec Z_j = 0 \quad
\cov (Z_j) = \operatorname*{Id}  \quad \Expec \|Z_j\|^3 \leq M.
\end{equation*}
Let $S_n = \sum_{j=1}^n Z_j$. For any $z\in \mathbb R ^d$ and any $r\ge M$ we have that
\[
\P \big( \|S_n -z\| \le r \big) \leq C  r^d  n^{-d/2}.
\]
\end{lemma}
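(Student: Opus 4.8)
The plan is to prove Lemma~\ref{lem:felipe} by the standard Fourier-analytic (local--CLT type) argument: one dominates the indicator of the ball by a smooth function whose Fourier transform is supported very close to the origin, so that the only input needed is a pointwise bound on the characteristic function $\phi(\xi):=\E[e^{i\langle \xi,Z_1\rangle}]$ in a small neighbourhood of $0$.

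\emph{Step 1 (Fourier smoothing).} Fix, once and for all, a Schwartz function $k\colon\Real^d\to[0,\infty)$ with $k\ge \mathbf 1_{B_1(0)}$ whose Fourier transform $\hat k$ is supported in the tiny ball $B_{1/100}(0)$. Such a $k$ exists: take $k=|\mathcal F^{-1}A|^2$ with $A\in C_c^\infty(B_{1/200}(0))$ nonnegative and even, suitably rescaled; then $\hat k$ is a constant multiple of $A*A$, hence supported in $B_{1/100}(0)$, while $\mathcal F^{-1}A$ is nearly constant (equal to its value at the origin up to $\cos(1/200)$) on $B_1(0)$, so $k\ge 1$ there after scaling $A$. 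For the target ball $B_r(z)$ set $\rho(x):=k((x-z)/r)$, so that $\rho\ge \mathbf 1_{B_r(z)}\ge 0$, $\hat\rho$ is supported in $\{\|\xi\|\le 1/(100r)\}$, and $\|\hat\rho\|_\infty=r^d\|\hat k\|_\infty$. Fourier inversion (legitimate since $\rho$ is Schwartz) and Fubini (legitimate since $\hat\rho\in L^1$) give
\[
\P\big(\|S_n-z\|\le r\big)\ \le\ \E[\rho(S_n)]\ =\ \frac1{(2\pi)^d}\int \hat\rho(\xi)\,\phi(\xi)^n\,d\xi\ \le\ \frac{r^d\|\hat k\|_\infty}{(2\pi)^d}\int_{\|\xi\|\le 1/(100r)}|\phi(\xi)|^n\,d\xi ,
\]
using $\varphi_{S_n}=\phi^n$ by independence.

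\emph{Step 2 (characteristic function estimate) and conclusion.} From $\E Z_1=0$, $\cov(Z_1)=\Id$, the elementary bound $|e^{iy}-1-iy+\tfrac{y^2}{2}|\le \tfrac{|y|^3}{6}$, and $\E|\langle\xi,Z_1\rangle|^3\le \|\xi\|^3\,\E\|Z_1\|^3\le M\|\xi\|^3$, one obtains $|\phi(\xi)|\le 1-\tfrac{\|\xi\|^2}{2}+\tfrac{M\|\xi\|^3}{6}$ for $\|\xi\|\le\sqrt2$. Since $r\ge M>1$, on the entire integration region $\|\xi\|\le 1/(100r)\le 1/(100M)\le 3/(2M)$ the cubic term is absorbed, giving $|\phi(\xi)|\le 1-\tfrac{\|\xi\|^2}{4}\le e^{-\|\xi\|^2/4}$. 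Hence $\int_{\|\xi\|\le 1/(100r)}|\phi(\xi)|^n\,d\xi\le \int_{\Real^d}e^{-n\|\xi\|^2/4}\,d\xi=(4\pi/n)^{d/2}$, and plugging this into Step~1 yields $\P(\|S_n-z\|\le r)\le C_d\,r^d\,n^{-d/2}$, as claimed.

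The one point that genuinely requires care — the main (mild) obstacle — is the matching of scales between Steps 1 and 2. The frequency support of the smoothing kernel must be a fixed \emph{small} dimensional constant times $1/r$, small enough that the region $\|\xi\|\lesssim 1/r\le 1/M$ lies inside the range where the second-order Taylor expansion of $\phi$ is controlled by the third moment $M$; this is precisely why we insist on a ``wide'' band-limited kernel with $\operatorname{supp}\hat k\subset B_{1/100}(0)$. The cost is that $\|\hat k\|_\infty$ becomes a large constant, but it depends only on $d$ and so is harmless. Everything else (the kernel construction, Fourier inversion and Fubini for Schwartz functions, and the cubic Taylor estimate) is routine.
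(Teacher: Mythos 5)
Your proof is correct and follows essentially the same Fourier-analytic route as the paper: dominate the indicator of the ball by a band-limited bump function, reduce to an integral of $|\phi|^n$ over a small frequency ball, and bound $|\phi|$ there by $e^{-\|\xi\|^2/4}$ via the third-moment Taylor estimate. The one small difference is that you scale the bump to frequency radius $\asymp 1/r$, which gives the bound for every $r\ge M$ directly, whereas the paper's Lemma~\ref{lem:felipe2} is applied at the fixed scale $r=(4C_dM)^{-1}$ and the extension to general $r\ge M$ is left implicit (by covering $B_r$ with $O((r/M)^d)$ balls of radius $\asymp M$).
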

We will apply this to $W$ to get an anti-concentration result for the walk $X(t)$:
\begin{corollary}\label{cor:global}
    For any $t\in [p^{-1},T+1]$ any $r\ge t^{1/3}p^{-2/3}  \log^{25}(1/p)$ and every $z\in \mathbb R ^d$
    \begin{equation*}
        \mathbb P \big( \|X(t)-z\| \le r \big) \le Cr^d(t/p)^{-d/2}.
    \end{equation*}
\end{corollary}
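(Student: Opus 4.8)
The plan is to split the walk $X(t)$ into $n$ independent pieces using the concatenation results of Section~\ref{sec:conc}, apply the anti-concentration Lemma~\ref{lem:felipe} to the (properly rescaled) concatenated walk, and then transfer the bound back to $X(t)$ using Corollary~\ref{cor:conca} together with the diffusive variance estimate of Lemma~\ref{lem:var}. First I would fix $t\in[p^{-1},T+1]$ and choose an integer step length $t'\approx p^{-2/3}t^{1/3}\log^{18}(1/p)$ (the precise power of $\log$ will be dictated by the error terms below), and let $n:=\lfloor t/t'\rfloor\ge 1$; take $W_1^*,\dots,W_n^*$ to be i.i.d.\ regenerated mirror walks of length $t'$ (with the last one of length in $[t',2t']$, as in the proof of $\Hyptwo{T+1}$), each with uniform random initial velocity, and let $W_c$ be their concatenation. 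By Corollary~\ref{cor:conca}, there is a coupling with $W$ such that
\[
\mathbb P\Big(\max_{s\le t}\|X(s)-X_c(s)\|\ge np^{-1}\log^7(1/p)\Big)\le e^{-c\log^3(1/p)}.
\]
Note $np^{-1}\log^7(1/p)\lesssim (t/t')p^{-1}\log^7(1/p)$, which is much smaller than $r$ by the choice of $t'$ and the hypothesis $r\ge t^{1/3}p^{-2/3}\log^{25}(1/p)$; so up to an additive error $e^{-c\log^3(1/p)}$ (which is absorbed into the bound since $t\le e^{\log^2(1/p)}$ forces $(t/p)^{-d/2}$ to not be too small, or more simply one checks $e^{-c\log^3(1/p)}\le Cr^d(t/p)^{-d/2}$ directly), it suffices to bound $\mathbb P(\|X_c(t)-z\|\le 2r)$.

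Next I would apply Lemma~\ref{lem:felipe} to the increments $Z_j$ obtained from $W_j^*$. The natural normalization is $Z_j:=(X_j^*(t')-X_j^*(0))/\sigma_{t'}$ where $\sigma_{t'}^2$ is the diagonal variance entry; by symmetry $\mathbb E Z_j=0$ and $\cov(Z_j)=\Id$. For the third-moment bound $\mathbb E\|Z_j\|^3\le M$, I would use $\Hyptwo{T}$ in the form \eqref{e.H2*}: it gives $\|X_j^*(t')\|\le \sqrt{t'/p}\log^8(1/p)$ except on an event of probability $Ce^{-\log^2(1/p)}$, on which one may crudely bound $\|X_j^*(t')\|\le t'$; combined with the lower bound $\sigma_{t'}^2\ge c t'/p$ from \eqref{ea:varKP} this yields $\mathbb E\|Z_j\|^3\le M$ with $M\le C\log^{24}(1/p)$. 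The lemma then requires $r/\sigma_{t'}\ge M$, i.e.\ $r\ge M\sigma_{t'}\approx \log^{24}(1/p)\sqrt{t'/p}\approx p^{-2/3}t^{1/3}\log^{33}(1/p)$ — so the clean thing is to pick the power of $\log$ in $t'$ a bit smaller, or equivalently absorb everything into the hypothesis $r\ge t^{1/3}p^{-2/3}\log^{25}(1/p)$ after optimizing the choice of $t'$; this bookkeeping of logarithmic factors is the one place where care is needed. With these in hand, $S_n=\sum_j Z_j=(X_c(t)-X_c(0))/\sigma_{t'}$, and Lemma~\ref{lem:felipe} gives
\[
\mathbb P\big(\|X_c(t)-z\|\le 2r\big)=\mathbb P\big(\|S_n-z/\sigma_{t'}\|\le 2r/\sigma_{t'}\big)\le C\,(r/\sigma_{t'})^d\,n^{-d/2}.
\]

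Finally I would simplify $(r/\sigma_{t'})^d n^{-d/2}$ to $Cr^d(t/p)^{-d/2}$: since $\sigma_{t'}^2\asymp t'/p$ by \eqref{ea:varKP} and $n\asymp t/t'$, one has $(r/\sigma_{t'})^d n^{-d/2}\asymp r^d (p/t')^{d/2}(t'/t)^{d/2}=r^d(p/t)^{d/2}$, which is exactly the claimed bound up to the universal constant. Combining with the concatenation error finishes the proof. The main obstacle I anticipate is the logarithmic accounting around the third-moment bound and the threshold $r\ge M\sigma_{t'}$ in Lemma~\ref{lem:felipe}: one must choose the block length $t'$ so that simultaneously (i) the concatenation error $np^{-1}\log^7(1/p)$ stays well below $r$, (ii) the moment condition $r\ge M\sigma_{t'}$ is implied by $r\ge t^{1/3}p^{-2/3}\log^{25}(1/p)$, and (iii) $n\ge 1$. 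All three are satisfied by a single choice $t'\asymp p^{-2/3}t^{1/3}\times(\text{small power of }\log)$, but verifying the inequalities requires keeping track of the various $\log$ powers; the probabilistic content is entirely contained in the already-proven Corollary~\ref{cor:conca}, Lemma~\ref{lem:var}, \eqref{e.H2*}, and Lemma~\ref{lem:felipe}.
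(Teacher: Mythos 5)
Your overall strategy is the same as the paper's: concatenate into $\approx n$ i.i.d.\ blocks of length $t'$, rescale by $\sigma_{t'}$, apply Lemma~\ref{lem:felipe} with $M\lesssim \log^{24}(1/p)$, and transfer via Corollary~\ref{cor:conca}. The general observation that the final bound $(r/\sigma_{t'})^d n^{-d/2}\asymp r^d(p/t)^{d/2}$ is insensitive to the choice of $t'$ (as long as $nt'\approx t$) is also correct.

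However, there is a genuine gap: your choice of block length $t'\approx p^{-2/3}t^{1/3}\,\mathrm{polylog}(1/p)$ has the exponents of $t$ and $p$ swapped, and this choice does not satisfy the constraints you list. With it, $n=\lfloor t/t'\rfloor\approx t^{2/3}p^{2/3}/\mathrm{polylog}(1/p)$, so the concatenation error is $np^{-1}\log^7(1/p)\approx t^{2/3}p^{-1/3}/\mathrm{polylog}(1/p)$, which exceeds the allowed radius $r\ge t^{1/3}p^{-2/3}\log^{25}(1/p)$ as soon as $t\gg p^{-1}\,\mathrm{polylog}(1/p)$ — and $t$ can be as large as $\bar T\approx e^{\log^2(1/p)}$. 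At the other end, for $t$ close to $p^{-1}$ your $t'$ exceeds $t$, so $n<1$ and the decomposition is empty. Your closing assertion that ``all three constraints are satisfied by $t'\asymp p^{-2/3}t^{1/3}\times(\text{small power of }\log)$'' is therefore false outside a thin window of $t$. The correct scaling, used in the paper, is $n\approx\lfloor (tp)^{1/3}\rfloor$ and $t'\approx t/n\approx t^{2/3}p^{-1/3}$: then the concatenation error is $\approx t^{1/3}p^{-2/3}\log^7(1/p)\ll r$, while $\sigma_{t'}\approx\sqrt{t'/p}\approx t^{1/3}p^{-2/3}$ so that $r\ge M\sigma_{t'}$ with $M\approx\log^{24}(1/p)$, and $n\ge 1$ on the whole range $t\ge p^{-1}$. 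It looks as though you matched $t'$ (rather than $\sigma_{t'}\approx\sqrt{t'/p}$) to $r$; once the block length is corrected, the rest of your argument goes through as written and coincides with the paper's proof.
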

Before we prove Corollary~\ref{cor:global}, let us show how to use it to complete the proof of~$\Hypone{T+1}$. By Remark~\ref{rem:trivial}, Assumption~$\Hypone{T+1}$ trivially holds if $T+1< p^{-1-2\epsilon/d}$, and we may thus assume that $T+1\ge p^{-1-2\epsilon/d}$.
Let $t_2:=T^{2/3}p^{-1/3}$ and $t_1:=T+1-t_2$, and let  $r_1:=T^{1/3}p^{-2/3} \log^{25}(1/p)$ and $r_2:=2p^{-1} \log ^7(1/p)$. Let $W_1^*,W_2^*$ be two independent regenerated walks of lengths $t_1,t_2$ and let $W_{c}$ be the concatenation of these walks. By $\Hyp(T)$ and Proposition~\ref{prop:conc}, for any $z\in \mathbb Z ^d$ we have that
    \begin{eqnarray*}
      \lefteqn{  \mathbb P \big( \|X(T+1) - z\|\le p^{-1} \big)}
      \\
      & \le& \mathbb P \big(\|X_c(T+1)-z\|\le r_2 \big) +e^{-c\log ^3(1/p)} \\
        &\le &\mathbb P \big( \|X_1^*(t_1)-z\| \le r_1  \big) \mathbb P \big( \|X_2^*(t_2)+X_1^*(t_1)-z\|\le r_2 \mid \|X_1^*(t_1)-z\| \le r_1  \big)
        \\
        &&+\mathbb P \big( \|X_1^*(t_1)-z\| > r_1  \big)\cdot e^{-\log ^2(1/p)} +e^{-c\log ^3(1/p)},
    \end{eqnarray*}
where in the last inequality we used Assumption~$\Hyptwo{t_2}$ to bound the probability that $\|X_2^*(t_2)\|\ge r_1-r_2$.
Using now Assumption~$\Hypone{t_2}$  for the walk $W^*_2$ and Corollary~\ref{cor:global}   for the walk $W_1^*$, this yields
\begin{eqnarray*}
      \lefteqn{\mathbb P \big( \|X(T+1) - z\|\le p^{-1} \big)}
      \\
        &\le& Cr_1^d ((T+1)/p)^{-d/2} \cdot \log ^{7d}(1/p)p^{-\epsilon }(pt_2)^{-d/2} e^{ (\log t_2)^{1/3}}
        \\
        &=& C (p(T+1))^{-d/2} p^{-\epsilon } \log^{25+7d}(1/p)  e^{(\log t_2)^{1/3}}\le (p(T+1))^{-d/2} p^{-\epsilon } e^{(\log (T+1))^{1/3}},
\end{eqnarray*}
and $\Hypone{T+1}$ is proved.

\medskip

We conclude with the proof of Corollary~\ref{cor:global}.
\begin{proof}[Proof of Corollary~\ref{cor:global}]
Let $p^{-1} \le t \le T+1$, let $r\ge t^{1/3}p^{-2/3}  \log^{25}(1/p)$ and let $z\in \mathbb R^d$. Let $n:=\lfloor t^{1/3}p^{1/3}\rfloor $ and let $t':=\lfloor t/n \rfloor $. Let $W_1^*,\dots ,W_n^*,W_{n+1}^*$ be independent regenerated walks, all of length $t'$ except $W_{n+1}^*$ which is of length $t-nt'\le t'$. Let $W_c$ be the concatenation of these walks of total length $t$.

We apply Lemma~\ref{lem:felipe} with $Z_i:=X_i^*(t')/\sigma _{t'}$ and $S_n:=\sum _{i=1}^nZ_i=X_c(nt')/\sigma _{t'}$. Note   that $\mathbb E [Z_i]=0$ and $\cov (Z_i)=\Id$. Moreover, by Lemma~\ref{lem:var} together with Assumptions~$\Hyptwo{t'}$ and $\Hyptwo{t-nt'}$ we have that $\mathbb E [\|Z_i\|^3]\le C \log^{24}(1/p)\ll r/\sigma _{t'}$. Thus, using that $\sigma _{t'} \ge c\sqrt{t'/p}\ge ct^{1/3}p^{-2/3}$ by Lemma~\ref{lem:var} we obtain using Lemma~\ref{lem:felipe} that
\[\mathbb P \big( \|X_c(nt')-z\| \le 3r \big)= \mathbb P \big( \|S_n-z/\sigma _{t'}\| \le 3r/\sigma _{t'} \big) \le C  (r/\sigma _{t'})^dn^{-d/2} \le Cr^d(t/p)^{-d/2}.\]
Thus, by Assumption~$\Hyptwo{t-nt'}$ we have
\[\mathbb P \big( \|X_c(t)-z\| \le 2r \big)\le \mathbb P \big( \|X_c(nt')-z\| \le 3r \big) +\mathbb P \big( \|X^*_{n+1}(t-nt')\| \le r \big) \le Cr^d(t/p)^{-d/2} .\]
This completes the proof of the corollary using Corollary~\ref{cor:conca} since the error from concatenation is $(n+1)p^{-1}\log ^7(1/p)$ which is asymptotically smaller than $r$.
\end{proof}

\appendix
\section{An elementary anticoncentration estimate}

In this section we give the proof of Lemma~\ref{lem:felipe}, which is based on Fourier analysis. We define the Fourier transform $\Ft{\nu}$ of a function $\nu$ by $\Ft{\nu}(\xi)=\int_{\mathbb R^d} e^{-2\pi i\xi \cdot x} d\nu(x)$. We start with the following elementary lemma.
\begin{lemma}\label{lem:felipe2}
Let $\nu$ be a probability measure on $\Real^d$. Then for all $r>0$, we have
\[
\sup_{z\in\Real^d} \Prob_\nu (Y \in B_{r^{-1}}(z)) \leq C r^{-d} \int_{B_r} |\Ft{\nu}(\xi)| \diff \xi.
\]
\end{lemma}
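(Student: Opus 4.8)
The plan is the classical Fourier smoothing (Fejér-kernel) argument: bound the indicator of a small ball above by a nonnegative test function whose Fourier transform is supported in the dual ball, and then evaluate its integral against $\nu$ by Parseval. Concretely, I would first fix, once and for all, a dimensional test function. Choose $\psi\in C_c^\infty(\Real^d)$ real-valued with $\operatorname{supp}\psi\subseteq B_{1/2}$ and $\int\psi=1$, and set $\Phi:=|\Ft\psi|^2$. Then $\Phi$ is Schwartz, $\Phi\ge 0$ everywhere, $\Phi(0)=|\Ft\psi(0)|^2=1$, and $\Ft\Phi$ equals the convolution $\psi(-\cdot)\ast\psi$, hence is bounded (indeed continuous) and supported in $B_1$. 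By continuity of $\Phi$ there is $\delta=\delta(d)\in(0,1]$ with $\Phi\ge\tfrac12$ on $B_\delta$. Setting $\phi_0(x):=2\Phi(\delta x)$, one gets $\phi_0\ge 0$ on $\Real^d$, $\phi_0\ge 1$ on $B_1$, $\Ft{\phi_0}$ supported in $B_\delta\subseteq B_1$, and $\kappa:=\|\Ft{\phi_0}\|_\infty<\infty$ a constant depending only on $d$.

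Next, given $z\in\Real^d$ and $r>0$, I would translate and dilate by setting $\phi_{z,r}(x):=\phi_0(r(x-z))$. A change of variables gives $\Ft{\phi_{z,r}}(\xi)=r^{-d}e^{-2\pi i\xi\cdot z}\,\Ft{\phi_0}(\xi/r)$, so $\Ft{\phi_{z,r}}$ is supported in $B_r$ with $\|\Ft{\phi_{z,r}}\|_\infty=r^{-d}\kappa$, while $\phi_{z,r}\ge 0$ everywhere and $\phi_{z,r}\ge 1$ on $B_{r^{-1}}(z)$. Since $\phi_{z,r}\ge \1_{B_{r^{-1}}(z)}$, Fourier inversion (valid as $\phi_{z,r}$ is Schwartz) together with Fubini (valid since $\Ft{\phi_{z,r}}\in L^1$ and $\nu$ is a probability measure) give
\[
\Prob_\nu\big(Y\in B_{r^{-1}}(z)\big)\ \le\ \int_{\Real^d}\phi_{z,r}\,\diff\nu\ =\ \int_{\Real^d}\Ft{\phi_{z,r}}(\xi)\,\Ft\nu(-\xi)\,\diff\xi .
\]
Bounding the integrand in absolute value, using the support and sup norm of $\Ft{\phi_{z,r}}$ and the change of variables $\xi\mapsto-\xi$ (which preserves $B_r$), this yields
\[
\Prob_\nu\big(Y\in B_{r^{-1}}(z)\big)\ \le\ \kappa\, r^{-d}\int_{B_r}|\Ft\nu(\xi)|\,\diff\xi ,
\]
and since the right-hand side is independent of $z$ one may take the supremum over $z\in\Real^d$, giving the claim with $C=\kappa$.

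The argument has no genuine obstacle; the only point requiring care is the construction above of a \emph{nonnegative} bump whose Fourier transform has \emph{compact} support in a prescribed ball. This is exactly what forces the squared-modulus construction $\Phi=|\Ft\psi|^2$ (so that $\Phi\ge 0$ while $\Ft\Phi=\psi(-\cdot)\ast\psi$ has compact support in $B_1$) rather than a naive smooth cutoff, and once $\phi_0$ is available the remaining steps are routine bookkeeping with the paper's Fourier conventions.
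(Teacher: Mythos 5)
Your proof is correct and takes essentially the same approach as the paper: bound the ball's indicator by a nonnegative test function whose Fourier transform is supported in $B_r$, then pair against $\Ft\nu$ via Plancherel/inversion. The only difference is cosmetic -- you spell out the $|\Ft\psi|^2$ construction of the bump and normalize it to dominate the indicator, whereas the paper merely asserts the existence of a suitable $\rho$ and works with the lower bound $\rho_r\ge cr^d$ on $B_{r^{-1}}$ rather than $\ge 1$.
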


\begin{proof}
By translation invariance in form of $|\Ft{\nu(z+\cdot)}|=|\Ft{\nu}|$, it suffices to prove that
\[
\Prob_\nu (Y\in B_{r^{-1}}(0)) \leq C r^{-d} \int_{B_r} |\Ft{\nu}(\xi)| \diff \xi.
\]

Let $\rho$ be a positive function with Fourier transform compactly supported $\operatorname*{supp} \Ft{\rho} \subset B_1$, with lower bound $\rho(x)\geq c$ for $x\in B_1$, and normalized so that $\int \rho = 1$.

Let $\rho_r(x) = r^d \rho(rx)$ which satisfies $\int \rho_r = 1$, $\rho_r(x)\geq cr^d $ on $B_{r^{-1}}$,
and $\operatorname*{supp} \Ft{\rho} \subset B_r$.   Then by Plancherel's formula,
\[
\int \rho_r(x) \diff\nu(x) = \int \Ft{\rho_r}(\xi) \Ft{\nu}(\xi) \diff\xi
\leq \int_{B_r} |\Ft{\nu}(\xi)| \diff \xi .
\]
On the other hand,
\[
\int \rho_r(x)\diff \nu (x)\geq  \int_{B_{r^{-1}}} cr^d \diff \nu(x)= cr^d\cdot \Prob(Y\in B_{r^{-1}}),
\]
as needed.
\end{proof}
We conclude with the proof of Lemma~\ref{lem:felipe}.
Let $\mu$ be the probability measure of $Z$.  The fact that $\mu$ is a probability measure and the moment conditions on $Z$ imply the following:
\begin{equation*}
\Ft{\mu}(0) = 1, \quad
\partial_j \Ft{\mu}(0) = 0, \quad
\partial_{jk} \Ft{\mu}(0) = \delta_{jk}, \quad
\sup_{\xi} |\partial_{ijk} \Ft{\mu}(\xi)| \leq C_d M.
\end{equation*}

By the Taylor remainder theorem the above imply
\[
|\Ft{\mu}(\xi) - (1 - \frac12 |\xi|^2)| \leq C_d M |\xi|^3.
\]
In particular, for $|\xi| \leq (4C_d M)^{-1}$ we have
\[
|\Ft{\mu}(\xi)| \leq 1 - \frac12 |\xi|^2 + C_d M |\xi|^3
\leq 1 - \frac14 |\xi|^2 \leq \exp(-\frac14 |\xi|^2).
\]
The probability measure of $S_n$ is the $n$-fold convolution $\mu^{\ast n}$. It has Fourier transform
$\Ft{\mu}^n(\xi)$, which satisfies
\[
|\Ft{\mu}^n(\xi)|\leq \exp(-\frac{n}{4} |\xi|^2)
\]
on $|\xi|\leq (4C_d M)^{-1}$.   In particular, with $r= (4C_dM)^{-1}$ we have that
\[
\int_{B_r} |\Ft{\mu}(\xi)|^n \diff\xi \leq
\int_{\Real^d} \exp(-\frac{n}{4} |\xi|^2) \leq C n^{-d/2}.
\]
Now the result follows from the Lemma~\ref{lem:felipe2} applied to $\nu=\mu^{\ast n}$.

\newpage

\section{Summary of Notation}
Below we collect the definitions of various objects used throughout the proof for the convenience of the reader.

\begin{figure}[h!]
\centering
\renewcommand{\arraystretch}{1.5}
\begin{tabular}{l|c|r}
Notation & Meaning & Reference \\
\hline
$\tilde{W}$ & The driving process & Section~\ref{sec:driving} \\
$\tau_{\rm int}$ & The time of first interaction & Equation~\eqref{tau-int} \\
$\tau_{\rm clo}$ & First time walk forms a closed cycle & Equation~\eqref{tau-clo} \\
$W(t)$ & Regenerated mirror walk starting at $(0,e_1)$ & Section~\ref{sec:regenerated} \\
$\alpha(t)$ & Last regeneration time before $t$ & \\
$W^*(t)$ & Regenerated walk with random initial velocity & \\
$\mcal{M}$ & Set of mirror locations & Equation~\eqref{eq:M-def} \\
$t_*$ & Kinetic time unit $\lfloor p^{-1}\log^3(1/p)\rfloor $& Section~\ref{sec:prelims} \\[8pt]
$\tau_{\rm few}$ &
\renewcommand{\arraystretch}{0.9}
\begin{tabular}{c} Stopping time at which path continues \\ straight for a long time \end{tabular} &  \\[9pt]
$\tau_{\rm many}$ &
\renewcommand{\arraystretch}{0.9}
\begin{tabular}{c}
Stopping time at which many collisions
\\ are discovered in one short interval
\end{tabular} &  \\
$\Hypone{t}$,$\Hyptwo{t}$ & The inductive hypotheses & Section~\ref{sec:hypotheses} \\
$\mcal{H}(T)$ & $\Hypone{t}$ and $\Hyptwo{t}$ hold for all $0\leq t\leq T$
& Section~\ref{sec:hypotheses} \\
$\bar{T}$ & The long timescale $\lfloor e^{\log^2(1/p)}\rfloor$ & \\[9pt]
$\mcal{S}$ &
\renewcommand{\arraystretch}{0.9}
\begin{tabular}{c}
The set of times at which the walk may change direction \\
and head towards a previously discovered mirror
\end{tabular}
& Equation~\eqref{eq:S} \\[10pt]
$\tau_{\rm hit}$ &
\renewcommand{\arraystretch}{0.9}
\begin{tabular}{c}
The time after $t$ of the first interaction \\
of the walk with its past before $t$
\end{tabular} & Equation~\eqref{eq:def_tau_hit} \\[8pt]
$\mcal{R}$ & The set of relaxed times & Definition~\ref{eq:def_relax} \\
$\tau_{\rm hea}$ & The first time at which a heavy block forms
& Equation~\eqref{eq:tau-hea-def} \\[9pt]
$\tau_{\rm rel}$ &
\renewcommand{\arraystretch}{0.9}
\begin{tabular}{c}The first time there are not many \\ relaxed
times in a short interval\end{tabular} & Equation~\eqref{eq:tau-rel} \\[9pt]
$W_c=(X_c,V_c)$ & The concatenation of two independent
regenerated walks & Section~\ref{sec:conc}
\end{tabular}
\end{figure}

\section*{Acknowledgments}
The authors are grateful to Tom Spencer for suggesting that we team up on this project.
DE thanks Allan Sly for fruitful discussions at an early stage of this project.
AG acknowledges the hospitality of Stanford University as this work was coming to an end, and financial support from the European Research Council (ERC) under the European Union's Horizon 2020 research and innovation programme (Grant Agreement n$^\circ$864066)\footnote{Views and opinions expressed are however those of the authors only and do not necessarily reflect those of the European Union or the European Research Council Executive Agency. Neither the European Union nor the granting authority can be held responsible for them.}.
FH is supported by NSF grant DMS-2303094.

\bibliographystyle{alpha}

\bibliography{mirror}

\end{document}